\newtheorem{lemma}{Lemma}
\newtheorem{rem}{Remark}
\newtheorem{prb}{Problem}
\newtheorem{theorem}{Theorem}
\newtheorem{definition}{Definition}
\newtheorem{corollary}{Corollary}
\newtheorem{prop}{Proposition}
\author{Nanying Yang\thanks{The first author was supported by NNSF grant of China (No. 11301227)}\quad and Alexey Staroletov}
\title{The minimal polynomials of powers of cycles in the ordinary representations of symmetric and alternating groups}
\date{\vspace{-5ex}}
\begin{document}
%\baselineskip=1.5\baselineskip

\newcommand{\Addresses}{{% additional braces for segregating \footnotesize
		\bigskip
		\footnotesize
		
		N.~Yang, \textsc{School of Science, Jiangnan University, Wuxi, 214122, P.R.~China;}\par\nopagebreak
		\textit{E-mail address: } \texttt{yangny@jiangnan.edu.cn}
		
		\medskip
		
		A.~Staroletov, \textsc{Sobolev Institute of Mathematics, Novosibirsk 630090, Russia;}\par\nopagebreak
		\textsc{Novosibirsk State University, Novosibirsk 630090, Russia;}\par\nopagebreak
		\textit{E-mail address: } \texttt{staroletov@math.nsc.ru}
}}

\maketitle	

\begin{abstract}
 Denote the alternating and symmetric groups of degree $n$ by $A_n$ and $S_n$ respectively.
 Consider a permutation $\sigma\in S_n$ all of whose nontrivial cycles are of the same length.
 We find the minimal polynomials of $\sigma$ in the ordinary irreducible representations of $A_n$ and $S_n$.
\end{abstract}

\section{Introduction}
The question of finding the minimal polynomials of elements in representations for
a given group goes back to the famous work of P.\,Hall and G.\,Higman \cite{HallHig}.

Assume that $\mathbb{F}$ is an algebraically closed field and $G$ is a finite group.
Consider some irreducible representation $\rho$ of $G$ over $\mathbb{F}$.
For $g\in G$ denote by $deg(\rho(g))$ the degree of the minimal polynomial of the matrix $\rho(g)$
and by $o(g)$ the order of $g$ modulo $Z(G)$. 
The general problem was formulated in \cite{TiepZal08} as follows.

\begin{prb} Determine all possible values for $\deg(\rho(g))$, and if possible, all triples
$(G, \rho, g)$ with $deg(\rho(g))<o(g)$, in the first instance under the condition that $o(g)$
is a $p$-power.
\end{prb}

There are plenty of publications in this area and we mention only the results on symmetric and alternating groups of degree $n$, which will be denoted by $A_n$ and $S_n$ respectively. The minimal polynomials of prime order elements of $A_n$ and $S_n$ in the ordinary or projective representations were found in \cite{Zal96}.
For the algebraically closed fields of positive characteristic $p$, Kleshchev and Zalesski \cite{KleshZal04} described the minimal polynomials of order $p$ elements in the irreducible representations of covering groups of $A_n$.

In this paper we describe the minimal polynomials of some elements in the ordinary representations of $A_n$ and $S_n$. Namely, we consider permutations with the cycle decomposition consisting of cycles of a fixed length and cycles of length one. This set includes the set of permutations of prime order and consists of powers of single cycles. 

Recall that a partition of $n$ is a nonincreasing finite sequence of
positive integers $(\lambda_1, \lambda_2,\ldots, \lambda_k)$ whose sum equals~$n$. It is well-known that there exists a one-to-one correspondence between the set of equivalence classes 
of ordinary irreducible representations of $S_n$ and the set of partitions of~$n$. 
The symmetric group $S_n$ has several named representations. The
{\it alternating representation} is the representation of degree one mapping each permutation to
its sign. Furthermore,  if $V$ is a vector space of dimension $n$ over a field of characteristic zero with base $e_1, e_2,\ldots, e_n$ then $S_n$ acts naturally on $V$: for every $\sigma\in~S_n$ and  $i\in\{1,\ldots,n\}$ we have $\sigma(e_i)=e_{\sigma(i)}$. There are two irreducible constituents in this representation: the line 
$l=\langle e_1+e_2+\ldots+e_n\rangle$
and its orthogonal complement $l^\perp=\langle e_1-e_i~|~ 2\leq i\leq n \rangle$. This action of $S_n$ on $l^\perp$ is called {\it the standard irreducible representation} of $S_n$. It corresponds to the partition $(n-1,1)$.
Every irreducible representation corresponding to the partition $(2,1^{n-1})=(2,1,\ldots,1)$ is called {\it the associated representation with the standard representation}.

We say that a permutation $\sigma\in S_n$ is of shape $[a_1^{b_1}a_2^{b_2}\ldots a_k^{b_k}]$, where $a_i$ are distinct integers, 
if the cycle decomposition of $\sigma$ is comprised of $b_i$ cycles of length $a_i$ for $i=1\ldots k$. 
Denote the field of complex numbers by $\mathbb{C}$. The main result of this paper is the following.

\begin{theorem}\label{th:1}
Given positive integers $n$, $r$ and $m$ with $n\geq3$, $r\geq2$ and $rm\leq n$, assume that $\sigma\in S_n$ is of shape $[r^m1^{n-rm}]$, i.e. a product of $m$ cycles of length $r$, and $\rho:S_n\rightarrow GL(V)$ is a nontrivial irreducible representation of $S_n$
over $\mathbb{C}$. Denote the minimal polynomial of $\rho(\sigma)$ by $\mu_{\rho(\sigma)}(x)$.
Then $\mu_{\rho(\sigma)}(x)\neq x^r-1$ if and only if one of the following statements holds.
\begin{enumerate}[(i)]
\item We have $r=n$, $m=1$, and $\rho$ is the standard representation; in this case $\mu_{\rho(\sigma)}(x)=~\frac{x^n-1}{x-1}$.
\item We have $r=n$, $m=1$, and $\rho$ is associated with the standard representation; in this case $\mu_{\rho(\sigma)}(x)=\frac{x^n-1}{x+(-1)^n}$.
\item We have $r=n=6$, $m=1$, and $\rho$ corresponds to $\lambda=(3,3)$; in this
case $\mu_{\rho(\sigma)}(x)=\frac{x^6-1}{x^2+x+1}$.
\item We have $r=n=6$, $m=1$, and $\rho$ corresponds to $\lambda=(2,2,2)$; in this
case $\mu_{\rho(\sigma)}(x)=\frac{x^6-1}{x^2-x+1}$.
\item We have $n=4$, $\rho$ corresponds to $\lambda=(2,2)$; and $(r,m,\mu_{\rho(\sigma)}(x))\in\{(4,1,x^2-1),(3,1,x^2+x+1),(2,2,x-1)\}$.
\item $\rho$ is the alternating representation and $\mu_{\rho(\sigma)}(x)=x-(-1)^{\operatorname{sgn}(\sigma)}$.

\end{enumerate}
\end{theorem}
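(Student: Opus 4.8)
The plan is to convert the statement into a question about the spectrum of $\rho(\sigma)$, and then to run an induction on $n$ built from branching rules whose single base case is that of a full cycle, treated by the Murnaghan--Nakayama rule. Since all nontrivial cycles of $\sigma$ have length $r$ we have $\sigma^r=1$, hence $\rho(\sigma)^r=I$, and as $x^r-1$ is separable over $\mathbb{C}$ the matrix $\rho(\sigma)$ is diagonalizable with eigenvalues among the $r$-th roots of unity. Thus $\mu_{\rho(\sigma)}(x)=\prod_{\zeta}(x-\zeta)$ over those $r$-th roots of unity $\zeta$ that occur as eigenvalues, so $\mu_{\rho(\sigma)}(x)\neq x^r-1$ exactly when some $r$-th root of unity is missing from $\operatorname{Spec}\rho(\sigma)$. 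Hence the task is to classify the ``exceptional'' triples $(\lambda,r,m)$ (with $\lambda\vdash n$ labelling $\rho$) for which this happens and, in each exceptional case, to read off the actual spectrum. With $\zeta=e^{2\pi i/r}$ the multiplicity of $\zeta^k$ equals $\tfrac1r\sum_{j=0}^{r-1}\chi_\lambda(\sigma^j)\zeta^{-jk}$; since $\sigma^j$ has cycle shape $[(r/d)^{md}1^{n-rm}]$ with $d=\gcd(j,r)$, the value $\chi_\lambda(\sigma^j)$ depends only on $d\mid r$, and regrouping expresses each multiplicity as an explicit integer combination of the finitely many numbers $\chi_\lambda(\sigma^j)$. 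The alternating representation $\lambda=(1^n)$ is disposed of at once: $\rho(\sigma)$ is the scalar $\operatorname{sgn}(\sigma)$, so $\mu_{\rho(\sigma)}(x)=x-(-1)^{\operatorname{sgn}(\sigma)}\neq x^r-1$ because $r\geq 2$; this is case (vi).

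The key reduction is branching. Splitting off one fixed point gives $S_{n-1}\hookrightarrow S_n$ with $\sigma$ of shape $[r^m1^{n-1-rm}]$, and splitting off one $r$-cycle gives $S_r\times S_{n-r}\hookrightarrow S_n$ with $\sigma=\tau_1\times\sigma'$ where $\tau_1$ is an $r$-cycle in $S_r$ and $\sigma'$ has shape $[r^{m-1}1^{n-rm}]$. Decomposing the restriction of $\rho_\lambda$ by the branching/Littlewood--Richardson rule shows that $\operatorname{Spec}\rho_\lambda(\sigma)$ is the union, over the pairs of partitions occurring, of the pointwise products of the spectra of the factors evaluated at the corresponding smaller cycle elements; in particular it is a union of sets of the form $S\cdot\operatorname{Spec}\rho_\beta(\sigma')$ with $S\subseteq\mu_r$ nonempty and $\beta$ running over admissible sub-partitions of $\lambda$. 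Since $S\cdot\mu_r=\mu_r$ for any nonempty $S\subseteq\mu_r$, this yields: if some admissible $\beta\subseteq\lambda$ is non-exceptional for $\sigma'$, then $\lambda$ is non-exceptional for $\sigma$. Inductively, an exceptional $\lambda$ must have \emph{all} admissible sub-partitions exceptional; using the (inductively known) classification for smaller $n$, the shapes with this property form a very short explicit list (essentially the hooks $(n-1,1)$ and $(2,1^{n-2})$, the one-row and one-column shapes, the rectangle $(2,2)$ when $n=4$, and $(3,3),(2,2,2)$ when $n=6$). Peeling fixed points and $r$-cycles repeatedly reduces everything to the single genuinely new base case: $\sigma$ a full $n$-cycle, i.e. $(r,m)=(n,1)$.

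For this base case I use the Murnaghan--Nakayama rule: $\chi_\lambda(\sigma^j)=0$ unless the $(n/d)$-core of $\lambda$ is empty, where $d=\gcd(j,n)$ (in particular $\chi_\lambda$ vanishes on an $n$-cycle unless $\lambda=(n-s,1^s)$ is a hook, with value $(-1)^s$), and when the core is empty $\chi_\lambda(\sigma^j)$ is a sign times the multinomial $\binom{md}{|\lambda^{(0)}|,\dots,|\lambda^{(n/d-1)}|}$ times $\prod_i f^{\lambda^{(i)}}$ read off the $(n/d)$-quotient. Substituting into the multiplicity formula and computing shows that $\operatorname{Spec}\rho_\lambda(\sigma)$ is all of $\mu_n$ except precisely for $\lambda\in\{(n-1,1),(2,1^{n-2})\}$, and, when $n=6$, for $\lambda\in\{(3,3),(2,2,2)\}$ (besides the excluded trivial $\lambda=(n)$); reading off the omitted eigenvalues gives $\tfrac{x^n-1}{x-1}$ and $\tfrac{x^n-1}{x+(-1)^n}$ (the standard representation and its twist by the sign, cases (i) and (ii)) and $\tfrac{x^6-1}{x^2+x+1}$, $\tfrac{x^6-1}{x^2-x+1}$ (cases (iii) and (iv)). Feeding this, and the small-$n$ data, back through the branching reduction then settles the remaining finitely many candidate shapes for general $(r,m)$ by simply taking the union of the already-known spectra of their sub-partitions; this is where the last entries appear. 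For instance, for $\lambda=(2,2)$ and $n=4$: with $(r,m)=(4,1)$ one computes directly (getting $x^2-1$); with $(r,m)=(3,1)$ the only admissible sub-partition is $(2,1)\vdash3$, the standard representation of $S_3$ on a $3$-cycle, giving $x^2+x+1$; and with $(r,m)=(2,2)$ the representation $(2,2)$ has the normal Klein four-subgroup of $S_4$ in its kernel, so $\rho(\sigma)=I$ and $\mu_{\rho(\sigma)}(x)=x-1$ --- this is case (v). For every shape not on the list some sub-partition is already non-exceptional, so nothing new arises, and collecting the cases gives exactly (i)--(vi).

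The main obstacle is this last part: carrying out the Murnaghan--Nakayama computations with the correct signs across all divisors of $n$ in the base case, and --- more delicately --- making the induction genuinely exhaustive, i.e. proving that the ``short list'' of shapes all of whose sub-partitions are exceptional really is as described, which forces one to check by hand the rectangles, the near-hooks, and all partitions for the small values $n=4,5,6$ where the sporadic exceptions live. The set-up (the spectral reformulation and the multiplicity formula), the Littlewood--Richardson reduction, and the alternating representation are routine; the sporadic cases at $n=4$ and $n=6$ show that the finite case analysis cannot be avoided.
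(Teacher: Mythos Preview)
Your reduction by branching (peeling fixed points) matches the paper's, and your additional idea of peeling an $r$-cycle via $S_r\times S_{n-r}$ and Littlewood--Richardson is valid. But both routes terminate at the same base case --- a single full $n$-cycle acting on $S^\lambda$ --- and this is precisely where your outline has a genuine gap.

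You write that, for the full $n$-cycle, ``substituting into the multiplicity formula and computing shows that $\operatorname{Spec}\rho_\lambda(\sigma)$ is all of $\mu_n$'' except for the listed shapes. That sentence hides the entire difficulty. The $k$-quotient expression you quote does give each $\chi_\lambda(\sigma^j)$, but for an arbitrary $\lambda\vdash n$ there is nothing in your argument that controls the \emph{sizes and signs} of these values uniformly enough to force
\[
\sum_{j=0}^{n-1}\chi_\lambda(\sigma^j)\,\overline{\zeta}^{\,j}>0\qquad\text{for every }\zeta\in\mu_n.
\]
As $n$ grows the number of divisors, the shapes of the quotients, and the magnitudes of the resulting multinomials all vary with $\lambda$ in a way that no finite case analysis covers. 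The paper supplies exactly the missing ingredient: the Fomin--Lulov inequality
\[
|\chi_\lambda([r^m])|\;\le\;\frac{m!\,r^{m}}{(n!)^{1/r}}\,\chi_\lambda(1)^{1/r},
\]
combined with Stirling estimates, yields $\chi_\lambda(1)>\sum_{i=1}^{r-1}|\chi_\lambda(\sigma^i)|$ whenever $n\ge 23$ and $\chi_\lambda(1)\ge n(n-1)(n-2)(n-7)/24$; the finitely many smaller-degree characters are handled by the explicit formulas of Lemma~\ref{l:CharOfSmallDegree}, and $n\le 22$ by a machine check. Without a bound of this kind, your ``base case'' is not a single computation but an infinite family of them, one for every $(n,\lambda)$.

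A secondary point: your extra branching through $S_r\times S_{n-r}$ does not buy a simpler base case, because peeling the last $r$-cycle lands on $\sigma'=1$, whose spectrum is $\{1\}$, so the reduction stops at $m=1$ regardless. And the assertion that the $\lambda$ all of whose admissible $\beta$ are exceptional form ``a very short explicit list'' also needs a proof valid for every $n$ and $r$; this is plausible but is another place where you replace an argument by a claim.
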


Recall that {\it the standard representation} of $A_n$ is the restriction of the standard representation of $S_n$ to $A_n$.
As a corollary of Theorem~\ref{th:1} we prove the following statement.

\begin{corollary}\label{cor:1}
Given positive integers $n\geq5$, $r\geq2$ and $rm\leq n$, assume that $\sigma\in A_n$ is of shape $[r^m1^{n-rm}]$, and $\rho:A_n\rightarrow GL(V)$ is a nontrivial irreducible representation of $A_n$
over $\mathbb{C}$. Denote the minimal polynomial of $\rho(\sigma)$ by $\mu_{\rho(\sigma)}(x)$.

Then $\mu_{\rho(\sigma)}(x)\neq x^r-1$ if and only if one of the following statements holds.
\begin{enumerate}[(i)]
\item We have $r=n$ is odd, $m=1$, and $\rho$ is the standard representation; in this case $\mu_{\rho(\sigma)}(x)=~\frac{x^n-1}{x-1}$.
\item We have $r=n=5$, $m=1$, and $\rho$ corresponds to $\lambda=(3,1,1)$;
in this case $\mu_{\rho(\sigma)}(x)=~\frac{x^5-1}{(x-\eta)(x-\overline{\eta})}$, where $\eta$
is a primitive 5th root of unity.
\end{enumerate}
\end{corollary}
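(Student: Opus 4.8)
The plan is to deduce the corollary from Theorem~\ref{th:1} together with the standard description of the complex irreducible representations of $A_n$ in terms of those of $S_n$. Write $S^\lambda$ for the Specht module attached to a partition $\lambda$ of $n$ and $\lambda'$ for the conjugate partition; recall that if $\lambda\ne\lambda'$ then $S^\lambda$ restricts to an irreducible $A_n$-module isomorphic to the restriction of $S^{\lambda'}$, while if $\lambda=\lambda'$ then $S^\lambda$ restricts to $D^\lambda_+\oplus D^\lambda_-$ with $D^\lambda_\pm$ irreducible, non-isomorphic, of equal dimension, and interchanged by conjugation by any odd permutation. If $\rho$ is the restriction of some $S^\lambda$ with $\lambda\ne\lambda'$, then $\rho(\sigma)$ is conjugate to $S^\lambda(\sigma)$, so $\mu_{\rho(\sigma)}=\mu_{S^\lambda(\sigma)}$; running through the cases of Theorem~\ref{th:1} under the constraints $n\ge5$, $\lambda\ne\lambda'$, $\operatorname{sgn}(\sigma)=1$ (forced because $\sigma\in A_n$) and $\rho$ nontrivial, only cases (i) and (ii) remain, and they describe the same representation of $A_n$, namely the standard one on an $n$-cycle with $n$ odd (for $n$ odd the two displayed polynomials coincide). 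This is item (i), the converse being immediate from Theorem~\ref{th:1}(i).

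Now suppose $\lambda=\lambda'$ and $\rho=D^\lambda_\pm$. Then $S^\lambda(\sigma)$ is block-diagonal with blocks $D^\lambda_+(\sigma)$ and $D^\lambda_-(\sigma)$, so $\mu_{S^\lambda(\sigma)}=\operatorname{lcm}\bigl(\mu_{D^\lambda_+(\sigma)},\mu_{D^\lambda_-(\sigma)}\bigr)$; as $\sigma^r=1$, all three polynomials divide the squarefree polynomial $x^r-1$, so each is the product of $(x-\zeta)$ over the eigenvalues of the corresponding matrix. For $n\ge5$ no self-conjugate partition occurs in Theorem~\ref{th:1}(i)--(vi), so $\mu_{S^\lambda(\sigma)}=x^r-1$ and the eigenvalues of $D^\lambda_+(\sigma)$ and $D^\lambda_-(\sigma)$ together exhaust the $r$-th roots of unity. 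If the $S_n$-class of $\sigma$ does not split in $A_n$, then $\sigma$ is $A_n$-conjugate to $\tau^{-1}\sigma\tau$ for an odd $\tau$; since $D^\lambda_-$ is the $\tau$-conjugate of $D^\lambda_+$, the matrices $D^\lambda_\pm(\sigma)$ are conjugate, so $\mu_{D^\lambda_\pm(\sigma)}=x^r-1$ and no example arises. The class of $\sigma$ splits precisely when its cycle lengths are distinct and odd, i.e.\ when $m=1$, $r$ is odd and $n\in\{r,r+1\}$; this is the one configuration left to analyse.

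For it I would compute the eigenvalues of $D^\lambda_+(\sigma)$ by Fourier inversion from the values $\chi^{D^\lambda_+}(\sigma^k)$, $0\le k<r$. By Frobenius's formula for the constituents of a self-conjugate character on split classes, $\chi^{D^\lambda_+}(\sigma^k)=\chi^{D^\lambda_-}(\sigma^k)=\tfrac12\chi^\lambda(\sigma^k)$ for all $k$ unless the multiset of principal hook lengths of $\lambda$ equals the nontrivial part of the cycle type of $\sigma$, that is $(r)$ when $n=r$ or $(r,1)$ when $n=r+1$; the corresponding self-conjugate partitions are the hook $\bigl(\tfrac{r+1}{2},1^{(r-1)/2}\bigr)$ and $\bigl(\tfrac{r+1}{2},2,1^{(r-3)/2}\bigr)$. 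In the non-exceptional case $\chi^{D^\lambda_+}$ agrees with $\tfrac12\chi^\lambda$ on $\langle\sigma\rangle$, so every eigenvalue multiplicity of $D^\lambda_+(\sigma)$ is half the corresponding multiplicity of $S^\lambda(\sigma)$; the latter are all positive and the former are integers, hence all multiplicities of $D^\lambda_+(\sigma)$ are positive and $\mu_{D^\lambda_+(\sigma)}=x^r-1$. For the hook, the hook length formula gives $\dim S^\lambda=\binom{r-1}{(r-1)/2}$, which is below $2r$ only for $r\in\{3,5\}$; since $r=3$ forces $n=3<5$, the unique case with $\dim D^\lambda_\pm<r$ is $\lambda=(3,1,1)$, $n=r=5$, where $D^\lambda_\pm$ are the two $3$-dimensional irreducible representations of $A_5$ and a $5$-cycle acts with eigenvalue set $\{1,\eta,\eta^{-1}\}$ for a primitive $5$th root $\eta$ depending on the representation; thus $\mu_{D^\lambda_\pm(\sigma)}=\dfrac{x^5-1}{(x-\eta)(x-\overline\eta)}$, which is item (ii).

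It remains to treat the hook $\bigl(\tfrac{r+1}{2},1^{(r-1)/2}\bigr)$ for $r\ge7$ and the near-hook $\bigl(\tfrac{r+1}{2},2,1^{(r-3)/2}\bigr)$ for all admissible $r$, showing that there $D^\lambda_\pm(\sigma)$ still has every $r$-th root of unity as an eigenvalue. For the hook I would use the identification $S^\lambda\cong\bigwedge^{(r-1)/2}U$ with $U$ the standard $(r-1)$-dimensional module of $S_r$, Frobenius's value $\chi^{D^\lambda_\pm}(\sigma)=\tfrac12\bigl(\varepsilon\pm\sqrt{\varepsilon r}\bigr)$ with $\varepsilon=(-1)^{(r-1)/2}$, and the sign of the multiplication-by-$s$ permutation of $\mathbb{Z}/r$ (Zolotarev) to tell which powers $\sigma^s$ lie in which $A_n$-class; for the near-hook I would compute $\chi^\lambda(\sigma^k)$ by the Murnaghan--Nakayama rule. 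Fourier inversion then writes each eigenvalue multiplicity of $D^\lambda_+(\sigma)$ as half the (known) multiplicity of $S^\lambda(\sigma)$ plus an error which, up to normalisation, is a Ramanujan sum or a quadratic Gauss sum of modulus $O(\sqrt r)$. I expect the main obstacle to be bounding this error uniformly: one needs that for $r\ge7$ every multiplicity of $S^\lambda(\sigma)$ is at least $2$ and strictly larger in absolute value than the error, so that no eigenvalue of $D^\lambda_\pm(\sigma)$ is lost; the finitely many small values of $r$ are then checked directly.
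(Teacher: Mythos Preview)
Your reduction is correct and parallels the paper's: the non-self-conjugate case is immediate from Theorem~\ref{th:1}, and for self-conjugate $\lambda$ Proposition~\ref{p:chars_of_An} gives $\chi^{\lambda\pm}(\sigma^k)=\tfrac12\chi^\lambda(\sigma^k)$ for every $k$ unless some power $\sigma^k$ has cycle type equal to the principal hook lengths of $\lambda$; since those hook lengths are distinct and $\sigma$ has shape $[r^m1^{n-rm}]$, this forces $\sigma$ to be an $n$-cycle with $\lambda=((n+1)/2,1^{(n-1)/2})$ (the hook, $n$ odd) or an $(n-1)$-cycle with $\lambda=(n/2,2,1^{(n-4)/2})$ (the near-hook, $n$ even). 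Your class-splitting argument is correct but redundant: the character-halving observation already disposes of every self-conjugate $\lambda$ other than these two, whether or not the class of $\sigma$ splits.

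The divergence is in the two residual cases, and there the paper's arguments are shorter than what you sketch. For the near-hook the paper bypasses all explicit character computation by branching: $[\lambda]\downarrow S_{n-1}=[(\tfrac{n-2}{2},2,1^{(n-4)/2})]+[(\tfrac{n}{2},1^{(n-2)/2})]+[(\tfrac{n}{2},2,1^{(n-6)/2})]$, and on restriction to $A_{n-1}$ this yields four irreducible summands, none of them standard. By induction on $n$ (with $n=5,6$ read off the character table) $\sigma$ has minimal polynomial $x^{n-1}-1$ on each summand, hence on each $D^\lambda_\pm$. No Murnaghan--Nakayama or Gauss-sum estimate is needed.

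For the hook with $n\ge7$ the paper works directly with $\chi^{\lambda\pm}$ rather than decomposing it as $\tfrac12\chi^\lambda$ plus an error: when $\sigma^i$ has shape $[r^m]$ with $m\ge2$, Lemma~\ref{l:hook_diag} evaluates $\chi^\lambda(\sigma^i)$ as a signed binomial coefficient, while for $\sigma^i$ an $n$-cycle Proposition~\ref{p:chars_of_An} gives $|\chi^{\lambda\pm}(\sigma^i)|\le\tfrac12(1+\sqrt n)$. One then argues that $\chi^{\lambda\pm}(1)>\sum_{i=1}^{n-1}|\chi^{\lambda\pm}(\sigma^i)|$, so every eigenspace is nonzero. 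This absorbs your ``Gauss-sum error'' into a single inequality and avoids the auxiliary lower bound on the $S^\lambda$-multiplicities that you flag as the main obstacle; the Zolotarev/quadratic-residue analysis is unnecessary.
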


There are a few other examples of permutations $\sigma\in S_n$ such that 1 is not an eigenvalue of the image of $\sigma$ in an irreducible representation of $S_n$.
We list them in the following proposition.

\begin{prop}\label{pr:1} For an integer $n\geq3$, consider the irreducible representation $\rho:S_n\rightarrow GL(V)$ corresponding to a partition $\lambda$ of $n$. 
Assume that a pair $(\lambda,\sigma)$ is one of the following: 

\begin{enumerate}[(i)]
\item $((2^2,1^{n-4}), [(n-2)^12^1])$, where $n$ is odd,
\item or $\lambda=(1^n)$ and $\sigma$ is odd,
\item or $\sigma$ is $n$-cycle and either $\lambda=(n-1,1)$ or $n$ is odd and $\lambda=(2,1^{n-2}),$
\item or $((2^3), [3^12^11^1])$, $((4^2), [5^13^1])$, $((2^4), [5^13^1])$, $((2^5), [5^13^12^1]).$
\end{enumerate}

\noindent Then $1$ is not an eigenvalue of $\rho(\sigma)$.
\end{prop}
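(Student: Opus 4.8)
The plan is to use the elementary fact that $1$ is an eigenvalue of $\rho(\sigma)$ if and only if $\langle\sigma\rangle$ has a nonzero fixed vector on $V$, so that, with $r=\operatorname{ord}(\sigma)$ and $\chi$ the character of $\rho$,
\[
1\notin\operatorname{Spec}\rho(\sigma)\iff \dim V^{\langle\sigma\rangle}=\frac1r\sum_{j=0}^{r-1}\chi(\sigma^{j})=0 .
\]
Since every eigenvalue of $\rho(\sigma)$ is a root of unity, there is nothing delicate about multiplicities: for each pair $(\lambda,\sigma)$ in the list it suffices to evaluate $\chi_{\lambda}$ on the powers of $\sigma$ and check that the values sum to zero. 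For case (ii) there is nothing to do, as $\rho(\sigma)=\operatorname{sgn}(\sigma)=-1$.

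The main simplification is that, apart from the sign representation, every $\lambda$ in the list is a two-row partition after conjugation: $(2^{2},1^{n-4})'=(n-2,2)$, $(2,1^{n-2})'=(n-1,1)$, and $(2^{k})'=(k,k)$ for $k=3,4,5$, while $(n-1,1)$ and $(4,4)$ are already two-row. Using $S^{\lambda'}\cong S^{\lambda}\otimes\operatorname{sgn}$, i.e.\ $\chi_{\lambda'}(g)=\operatorname{sgn}(g)\,\chi_{\lambda}(g)$, the sum to be checked becomes $\sum_{j=0}^{r-1}\operatorname{sgn}(\sigma)^{j}\chi_{(n-k,k)}(\sigma^{j})$. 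For a two-row partition, $S^{(n-k,k)}=M^{(n-k,k)}\ominus M^{(n-k+1,k-1)}$ in the Grothendieck group, where $M^{(n-i,i)}$ is the permutation module on $i$-subsets, so that
\[
\chi_{(n-k,k)}(g)=f_{k}(g)-f_{k-1}(g),\qquad f_{i}(g):=\#\{\,i\text{-subsets of }\{1,\dots,n\}\text{ fixed setwise by }g\,\},
\]
and $f_{i}(g)=[x^{i}]\prod_{c}(1+x^{|c|})$, the product over the cycles $c$ of $g$; in particular $f_{i}(g)$ depends only on the cycle type of $g$.

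With this machinery the verification is case by case. The cycle type of $\sigma^{j}$ is obtained from that of $\sigma$ by the usual $\gcd$ bookkeeping, so $f_{i}(\sigma^{j})$, and hence $\chi_{\lambda}(\sigma^{j})$, are explicit; grouping $j\in\{0,\dots,r-1\}$ by the cycle type of $\sigma^{j}$ (and, when $\operatorname{sgn}(\sigma)=-1$, also by the parity of $j$) collapses the sum. In case (iii) this is immediate: $\chi_{(n-1,1)}(\sigma^{j})=\operatorname{fix}(\sigma^{j})-1$, whose sum over $j$ is $0$ for the $n$-cycle, and the $\operatorname{sgn}$-twist preserves the vanishing precisely when $n$ is odd. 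The four sporadic pairs of (iv) are finite checks (of orders $6$, $15$, $15$, $30$), and $((4^{2}),[5^{1}3^{1}])$, $((2^{4}),[5^{1}3^{1}])$ are disposed of simultaneously because $\operatorname{sgn}$ of that $\sigma$ equals $+1$.

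The only infinite family is (i): $(\lambda,\sigma)=((2^{2},1^{n-4}),[(n-2)^{1}2^{1}])$ with $n$ odd. Here $\operatorname{ord}(\sigma)=2(n-2)$, and writing $s=n-2$ the powers $\sigma^{j}$, $0\le j<2s$, split into four classes according to whether $s\mid j$ and whether $j$ is even; in each class $f_{1}(\sigma^{j})$ and $f_{2}(\sigma^{j})$ are small binomials in $s$, and summing $\sum_{j}(-1)^{j}\big(f_{2}(\sigma^{j})-f_{1}(\sigma^{j})\big)$, then using $\binom{n}{2}-n-\binom{n-2}{2}=n-3$, gives $0$. I expect this symbolic evaluation to be the main obstacle: the sporadic cases are routine once the set-up is in place, whereas family (i) requires the careful classification of all $2(n-2)$ powers and an exact cancellation in $n$. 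An essentially equivalent alternative is to read off the required values $\chi_{\lambda}(\sigma^{j})$ directly from the Murnaghan--Nakayama rule; I would keep that in reserve as a cross-check rather than as the main line of argument.
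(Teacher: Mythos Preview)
Your proposal is correct and follows essentially the same route as the paper: both reduce to showing $\sum_{j=0}^{r-1}\chi_\lambda(\sigma^j)=0$, pass to the conjugate two-row partition via $\chi_{\lambda'}=\operatorname{sgn}\cdot\chi_\lambda$, and in the infinite family (i) classify the $2(n-2)$ powers of $\sigma$ into the same four types before summing. The only cosmetic difference is that you read off $\chi_{(n-2,2)}$ from the fixed $2$-subset count $f_2-f_1$, whereas the paper quotes its explicit formula $\chi^{(n-2,2)}=\tfrac12(i_1-1)(i_1-2)+i_2-1$ (its Lemma~\ref{l:CharOfSmallDegree}); the values and the final cancellation are identical, and the sporadic cases (iv) are left to a finite check in both treatments.
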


\begin{rem} Using GAP~$\cite{GAP}$, we verify that for $3\leq n\leq 20$ Theorem~\ref{th:1} and Proposition~\ref{pr:1} include all examples 
of permutations $\sigma\in S_n$ and corresponding irreducible representations such that $1$ is not an eigenvalue of $\rho(\sigma)$.
\end{rem}

This paper is organized as follows. In Section~2 we recall notation and basic facts about ordinary representations of symmetric and alternating groups. In Section~3 we formulate and prove some auxiliary results used for proving Theorem~\ref{th:1} and Corollary~\ref{cor:1}. Sections 4--6 are devoted to the proofs of the main results.

\section{Ordinary representations: notation and basic facts}

Our notation for the ordinary representations of $S_n$ and $A_n$ follows \cite{James}. Let $n$ be a positive integer. We write $\lambda\vdash n$ for a partition $\lambda=(\lambda_1,\lambda_2,\ldots,\lambda_k)$, that is, a sequence of positive integers  $\lambda_i$ with $\lambda_1\geq \lambda_2\geq\ldots\geq\lambda_k$ and $\sum_{i=1}^k\lambda_i=n$. If $\lambda\vdash n$ then by $T^\lambda$ we denote the Young diagram of shape~$\lambda$, consisting of $k$ rows with $\lambda_1$, $\lambda_2$,$\ldots$, $\lambda_k$  
square boxes, respectively (see Figure~\ref{f:young}). 
\ytableausetup{centertableaux}

\begin{figure}[!h]
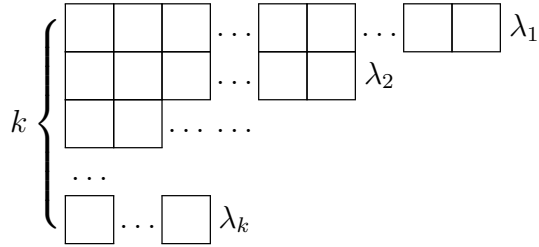

	\centering
	\begin{tabular}{r@{}l}
		\raisebox{-0.0ex}{$k\left\{\vphantom{\begin{array}{c}~\\[10ex] ~
				\end{array}}\right.$} 
		&
		\begin{ytableau}
			~  &       &      & \none[\dots]  &  &  & \none[\dots] & & &\none[\lambda_1] \\
			~  &       &       & \none[\dots]  &  & & \none[\lambda_2] \\
			~               &       &  \none[\dots]  & \none[\dots]  & \none   \\
			\none[\dots]    & \none & \none & \none         & \none \\
			~  & \none[\dots] &     & \none[\lambda_k] & \none & \none         & \none \\
			
		\end{ytableau}\\[-1.5ex]

	\end{tabular}
	\caption{The diagram $T^\lambda = (\lambda_1,\lambda_2,\ldots,\lambda_k)$}\label{f:young}
\end{figure}

\begin{definition} The column lengths of $T^{\lambda}$ form a partition of $n$
which is called {\it the partition associated with} $\lambda$ and denoted by $\lambda'$.
In other words, $\lambda'=(\lambda'_1,\lambda'_2,\ldots,\lambda'_{\lambda_1})$, where $\lambda'_i=\sum_{j, \lambda_j\geq i}1$. If $\lambda=\lambda'$ then $\lambda$
is called {\it self-associated}.
\end{definition}

We naturally numerate the boxes of $T^\lambda$ by the pairs $(i,j)$
meaning $j$ in row $i$.

\begin{definition} The set of boxes to the right or below box $(i,j)$ in $T^{\lambda}$ together with the latter forms the {\it hook} $H^{\lambda}_{ij}$ (see Figure~\ref{f:hook}).
The number of boxes in $H^{\lambda}_{ij}$ is denoted by $h^{\lambda}_{ij}$
and called {\it the length} of $H^{\lambda}_{ij}$. Using the elements of $\lambda'$,
we can express $h^{\lambda}_{ij}$ as $\lambda_i-j+\lambda'_j-i+1$.
\end{definition}

\ytableausetup{centertableaux}

\begin{figure}[!h]
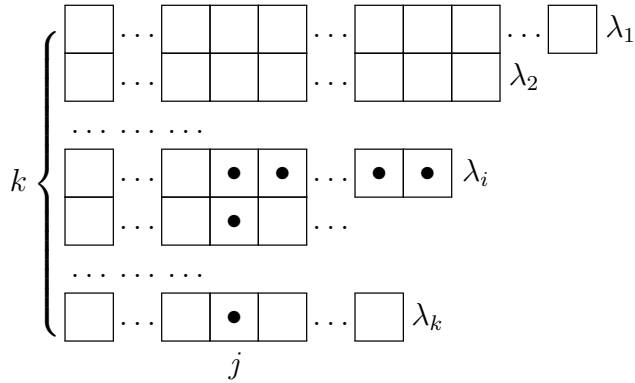

	\centering
	\begin{tabular}{r@{}l}
		\raisebox{1.0ex}{$k\left\{\vphantom{\begin{array}{c}~\\[17ex] ~
				\end{array}}\right.$} 
		& 
		\begin{ytableau}
			~  &  \none[\dots] &  &     &  & \none[\dots]  & &  &  & \none[\dots]  &   & \none[\lambda_1] \\
			~  &  \none[\dots] &  &    &  & \none[\dots]  & &  & & \none[\lambda_2] \\
			\none[\dots]  & \none[\dots] & \none[\dots] & \none & \none         & \none \\
			~  &  \none[\dots] &  &  \bullet  & \bullet & \none[\dots]  & \bullet & \bullet & \none[\lambda_i] \\
			~  &  \none[\dots] &  &  \bullet  &  &\none[\dots]  & \none         & \none   \\
			\none[\dots]  & \none[\dots] & \none[\dots] & \none & \none         & \none \\
			~  &  \none[\dots] &  & \bullet & & \none[\dots]  &  & \none[\lambda_k] \\
			   \none & \none & \none & \none[\emph{j}] \\
		\end{ytableau}\\[-1.5ex]
	\end{tabular}
	\caption{The hook $H^\lambda_{ij}$}\label{f:hook}
\end{figure}

\begin{definition} Let $H^{\lambda}_{ij}$ be a hook in $T^{\lambda}$. The number $l_{ij}=\lambda'_j-i$ is called the leg length of $H^{\lambda}_{ij}$. The part of $T^{\lambda}$ consisting of the boxes on the rim between the lower left and the upper right boxes of $H^{\lambda}_{ij}$ is denoted by $R^{\lambda}_{ij}$ and called a rim hook (see Figure~\ref{f:r-hook}).	
\end{definition}

\begin{figure}[!h]
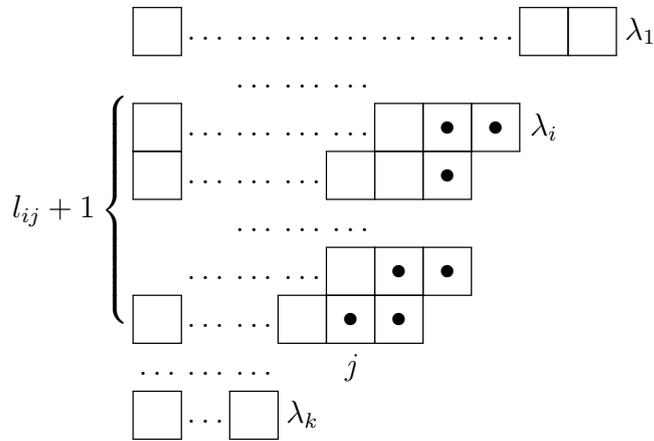

	\centering
	\begin{tabular}{r@{}l}
		\raisebox{1.0ex}{$l_{ij}+1\left\{\vphantom{\begin{array}{c}~\\[11ex] ~
				\end{array}}\right.$} 
		& 
		\begin{ytableau}
			~  &  \none[\dots] & \none[\dots] & \none[\dots] & \none[\dots] & \none[\dots] & \none[\dots] & \none[\dots] &  &  & \none[\lambda_1] \\
			\none & \none &  \none[\dots]  & \none[\dots] & \none[\dots] & \none & \none         & \none \\
			~  &  \none[\dots] & \none[\dots] & \none[\dots]  & \none[\dots]  & & \bullet & \bullet & \none[\lambda_i] \\
			~  &  \none[\dots] & \none[\dots] & \none[\dots]  &  & & \bullet   & \none   \\
			\none & \none &  \none[\dots]  & \none[\dots] & \none[\dots] & \none & \none         & \none \\
			\none & \none[\dots]  &  \none[\dots] &  \none[\dots] &  & \bullet  & \bullet  \\
			~  & \none[\dots] & \none[\dots]  &  &  \bullet & \bullet  \\
			\none[\dots]  & \none[\dots] & \none[\dots] & \none &\none[\emph{j}] & \none         & \none \\
			~  &  \none[\dots] &  & \none[\lambda_k] \\
			
		\end{ytableau}\\[-1.5ex]
	\end{tabular}
	\caption{The rim hook $R^\lambda_{ij}$}\label{f:r-hook}
\end{figure}

It is well-known that there exists a bijection between the set of equivalence classes of ordinary irreducible representations of $S_n$ and the set of partitions of~$n$. 
The equivalence class of irreducible representations corresponding to a partition $\lambda\vdash n$ is denoted by $[\lambda]$ and its character, by $\chi^{\lambda}$. 
By $[\lambda]\downarrow A_n$ we denote the restriction of $[\lambda]$ to $A_n$. 

We collect some basic facts about the ordinary irreducible representations and characters of symmetric and alternating groups in the following three propositions.
\begin{prop}\label{l:rep_Sn} For every integer $n>1$, the following statements hold.
	
\begin{enumerate}[(i)] 
	\item The splitting field for $S_n$ is the field of rationals $\mathbb{Q}$ and the values of all characters of $S_n$ are integers.
	
	\item We have the hook-formula $\chi^\lambda(1)=\cfrac{n!}{\prod_{(i,j)\in T^{\lambda}}h^{\lambda}_{ij}}$.
	
	\item If $\sigma\in S_n$ then $\chi^{\lambda'}(\sigma)=\operatorname{sgn}(\sigma)\cdot\chi^{\lambda}(\sigma)$.
\end{enumerate}
	
\end{prop}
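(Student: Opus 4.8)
The plan is to treat the three parts separately, since they rest on different classical techniques; all three may alternatively be cited from \cite{James}, but here is how I would derive them. For part (i) I would split the two assertions. To see that every character value is a rational integer, recall that $\chi^\lambda(\sigma)$ is a sum of roots of unity, hence an algebraic integer lying in $\mathbb{Q}(\zeta_d)$ where $d=\operatorname{ord}(\sigma)$. The Galois group $\operatorname{Gal}(\mathbb{Q}(\zeta_d)/\mathbb{Q})\cong(\mathbb{Z}/d\mathbb{Z})^\times$ acts on character values by $\chi^\lambda(\sigma)\mapsto\chi^\lambda(\sigma^k)$ for $\gcd(k,d)=1$. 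The key elementary observation is that raising a permutation to a power $k$ coprime to its order preserves the cycle type: since $d$ is the least common multiple of the cycle lengths, $\gcd(k,d)=1$ forces $\gcd(k,r)=1$ for each cycle length $r$, so every $r$-cycle is sent to an $r$-cycle. Hence $\sigma^k$ is $S_n$-conjugate to $\sigma$ and $\chi^\lambda(\sigma^k)=\chi^\lambda(\sigma)$, so $\chi^\lambda(\sigma)$ is Galois-fixed and therefore a rational integer. For the splitting field, I would invoke the Specht module $S^\lambda$, which is defined integrally via polytabloids and affords $[\lambda]$ over $\mathbb{Q}$; since these exhaust the irreducibles and are realized over $\mathbb{Q}$, the field $\mathbb{Q}$ is a splitting field (equivalently, all Schur indices equal $1$).

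For part (ii) I would use that $\chi^\lambda(1)=f^\lambda$, the number of standard Young tableaux of shape $\lambda$, which follows from the standard polytabloid basis of $S^\lambda$. I would then prove the hook identity starting from the Frobenius determinantal formula
\[
f^\lambda=\frac{n!}{\ell_1!\,\ell_2!\cdots\ell_k!}\prod_{i<j}(\ell_i-\ell_j),\qquad \ell_i=\lambda_i+k-i,
\]
and reduce it to the desired form by verifying the purely combinatorial identity
\[
\prod_{(i,j)\in T^\lambda}h^\lambda_{ij}=\frac{\ell_1!\,\ell_2!\cdots\ell_k!}{\prod_{i<j}(\ell_i-\ell_j)}.
\]
The latter is checked row by row: using the expression $h^\lambda_{ij}=\lambda_i-j+\lambda'_j-i+1$ recalled above, the hook lengths in row $i$ are exactly the elements of $\{1,2,\dots,\ell_i\}$ with the $k-i$ values $\ell_i-\ell_j$ ($j>i$) deleted, which gives the product $\ell_i!/\prod_{j>i}(\ell_i-\ell_j)$ for that row.

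For part (iii) I would show that tensoring with the sign character $[1^n]$ sends $[\lambda]$ to $[\lambda']$. The cleanest route is through the characteristic map $\operatorname{ch}$ onto the ring of symmetric functions, under which $\operatorname{ch}(\chi^\lambda)=s_\lambda$ and multiplication by $\operatorname{sgn}$ corresponds to the standard involution $\omega$ satisfying $\omega(s_\lambda)=s_{\lambda'}$. Since $\operatorname{ch}$ is an isometric isomorphism,
\[
\operatorname{ch}(\operatorname{sgn}\cdot\chi^\lambda)=\omega\bigl(\operatorname{ch}(\chi^\lambda)\bigr)=\omega(s_\lambda)=s_{\lambda'}=\operatorname{ch}(\chi^{\lambda'}),
\]
which yields $\operatorname{sgn}(\sigma)\cdot\chi^\lambda(\sigma)=\chi^{\lambda'}(\sigma)$ for all $\sigma$. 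A self-contained alternative is the Murnaghan--Nakayama rule: transposing $\lambda$ turns every rim hook of size $r$ and leg length $\ell$ into one of leg length $r-1-\ell$, changing its sign by $(-1)^{r-1}$; summing over the cycle lengths $r_i$ of $\sigma$ produces the global factor $(-1)^{\sum_i(r_i-1)}=\operatorname{sgn}(\sigma)$.

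The main obstacle is part (ii): the rationality argument in (i) and the involution argument in (iii) are short once the characteristic-map formalism is in place, whereas the hook-length formula genuinely requires either the determinant identity above together with the row-by-row bookkeeping, or an independent probabilistic or bijective argument. I would therefore concentrate the effort on verifying the combinatorial hook identity and would be content to cite \cite{James} for the Specht-module facts (integral polytabloid basis, dimension $f^\lambda$, and the splitting field over $\mathbb{Q}$) used in parts (i) and (ii).
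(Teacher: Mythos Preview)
Your arguments for all three parts are correct and standard. However, the paper does not actually prove this proposition: it is listed under ``notation and basic facts'' alongside the two propositions explicitly attributed to \cite{James}, and no proof or even sketch is given. In other words, the paper treats (i)--(iii) as background results to be quoted, not derived.

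So your proposal is not so much a different route as an actual proof where the paper offers none. Your Galois-stability argument for integrality, the Specht-module realization over $\mathbb{Q}$, the Frobenius determinant reduction of the hook formula, and either the $\omega$-involution or Murnaghan--Nakayama derivation of (iii) are all valid and would be acceptable substitutes for a bare citation. If anything, your plan is more than the paper requires; a single reference to \cite{James} (e.g., \S1.5, \S2.1, and \S2.3 there) would match the paper's treatment exactly.
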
	

\begin{prop}$\cite[Theorem~2.5.7]{James}\label{p:reps_of_An}$
Suppose that $\lambda$ is a partition of $n>1$. Then the following statements hold.
\begin{enumerate}[(i)]
	\item If $\lambda\neq\lambda'$ then $[\lambda]\downarrow A_n=[\lambda']\downarrow A_n$ is irreducible.
	
	\item If $\lambda=\lambda'$ then 
	$[\lambda]\downarrow A_n=[\lambda']\downarrow A_n$ splits into two irreducible and conjugate representations $[\lambda]^{\pm}$, i.e., $[\lambda]^{+(12)}$, defined by $[\lambda]^{+(12)}((12)\sigma(12)):=[\lambda]^{+}(\sigma)$, where $\sigma\in A_n$, 
	is equivalent to $[\lambda]^{-}$.
	
	\item The set $\{[\lambda]\downarrow A_n~|~\lambda\neq\lambda'  \}\cup\{[\lambda]^{\pm}~|~\lambda=\lambda'\vdash n\} $ is a complete system of equivalence classes of ordinary irreducible representations  of $A_n$.
	
\end{enumerate}	
\end{prop}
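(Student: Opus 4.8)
The plan is to recognize Proposition~\ref{p:reps_of_An} as the standard output of Clifford theory applied to the index-two normal subgroup $A_n\trianglelefteq S_n$, with the combinatorial input supplied entirely by the sign-twist formula of Proposition~\ref{l:rep_Sn}(iii). Write $\epsilon$ for the sign (alternating) character of $S_n$, so that $\epsilon\downarrow A_n$ is trivial and, by Proposition~\ref{l:rep_Sn}(iii), $\chi^{\lambda'}=\epsilon\cdot\chi^{\lambda}$, i.e. $[\lambda']\cong[\lambda]\otimes\epsilon$ as $S_n$-modules. Since $\epsilon$ restricts trivially, this at once yields the common equality $[\lambda]\downarrow A_n=[\lambda']\downarrow A_n$ appearing in both (i) and (ii), so it only remains to decide irreducibility versus splitting and then to prove completeness.

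First I would compute the norm of the restricted character. Using the projection formula $\operatorname{Ind}_{A_n}^{S_n}\!\big(\chi^{\lambda}\downarrow A_n\big)=\chi^{\lambda}\cdot\operatorname{Ind}_{A_n}^{S_n}1_{A_n}=\chi^{\lambda}\cdot(1_{S_n}+\epsilon)$ together with Frobenius reciprocity, one gets
\[
\big\langle \chi^{\lambda}\downarrow A_n,\ \chi^{\lambda}\downarrow A_n\big\rangle_{A_n}
=\langle \chi^{\lambda},\chi^{\lambda}\rangle_{S_n}+\langle \chi^{\lambda},\epsilon\cdot\chi^{\lambda}\rangle_{S_n}
=1+\langle \chi^{\lambda},\chi^{\lambda'}\rangle_{S_n}.
\]
Because distinct partitions label inequivalent irreducibles of $S_n$, the last inner product is $0$ when $\lambda\neq\lambda'$ and $1$ when $\lambda=\lambda'$. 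Hence the restriction has norm $1$ in the first case, so it is irreducible, proving (i); and it has norm $2$ in the second case, so it decomposes as a sum of exactly two distinct irreducible constituents, each of multiplicity one, which I name $[\lambda]^{\pm}$. This establishes the splitting asserted in (ii). The conjugacy relation in (ii) then follows from Clifford's theorem: the constituents of $\chi^{\lambda}\downarrow A_n$ form a single orbit under the conjugation action of $S_n/A_n$, and since the orbit has size two its inertia group is exactly $A_n$, so any transposition such as $(12)$ must interchange $[\lambda]^{+}$ and $[\lambda]^{-}$, which is precisely the stated relation $[\lambda]^{+(12)}\cong[\lambda]^{-}$.

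For part (iii), completeness is immediate: every irreducible character $\psi$ of $A_n$ occurs in $(\psi\uparrow S_n)\downarrow A_n$, hence in $\chi^{\mu}\downarrow A_n$ for some $\mu$, so $\psi$ appears in the listed family. The substantive point, which I expect to be the main obstacle, is irredundancy, and I would handle it by inducing back to $S_n$. Two identities drive the argument. For $\lambda\neq\lambda'$ one has $\operatorname{Ind}_{A_n}^{S_n}\!\big([\lambda]\downarrow A_n\big)=\chi^{\lambda}+\chi^{\lambda'}$, a sum of two \emph{distinct} irreducibles; for self-associated $\lambda$, a degree count ($2\cdot\tfrac12\chi^{\lambda}(1)=\chi^{\lambda}(1)$) combined with $\big\langle \operatorname{Ind}_{A_n}^{S_n}[\lambda]^{\pm},\chi^{\lambda}\big\rangle=\big\langle[\lambda]^{\pm},[\lambda]^{+}+[\lambda]^{-}\big\rangle=1$ forces $\operatorname{Ind}_{A_n}^{S_n}[\lambda]^{\pm}=\chi^{\lambda}$, a \emph{single} irreducible. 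From these: an isomorphism $[\lambda]\downarrow A_n\cong[\mu]\downarrow A_n$ with $\lambda\neq\lambda'$, $\mu\neq\mu'$ induces up to $\chi^{\lambda}+\chi^{\lambda'}=\chi^{\mu}+\chi^{\mu'}$, giving $\{\lambda,\lambda'\}=\{\mu,\mu'\}$; two self-associated constituents coincide only if their (equal) inductions force $\lambda=\mu$, while $[\lambda]^{+}\not\cong[\lambda]^{-}$ is already known from the norm-two splitting; and the two families never overlap because one induces to a reducible $\chi^{\lambda}+\chi^{\lambda'}$ and the other to the single irreducible $\chi^{\lambda}$. Balancing these cases yields exactly the complete, irredundant list in (iii).
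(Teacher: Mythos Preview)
Your proof is correct and is the standard Clifford-theoretic argument for restricting irreducibles to an index-two normal subgroup. Note, however, that the paper does not give its own proof of Proposition~\ref{p:reps_of_An}: it is quoted verbatim from \cite[Theorem~2.5.7]{James} as a background fact, so there is nothing in the paper to compare against. Your write-up would serve perfectly well as a self-contained replacement for the citation; the only small remark is that in part~(ii) you might state explicitly that the two constituents $[\lambda]^{\pm}$ have equal degree $\tfrac12\chi^{\lambda}(1)$ (this follows from their being conjugate, and you use it implicitly in the degree count for part~(iii)).
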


\begin{prop}$\cite[Theorem~2.5.13]{James}$\label{p:chars_of_An} Suppose that $\lambda=\lambda'$ is a self-associated partition of $n>1$ and denote by $\chi^{\lambda\pm}$ the characters of $[\lambda]^{\pm}$. Assume that the main diagonal of $T^{\lambda}$ has $l$ boxes and denote by $a_1$, $a_2$,$\ldots$, $a_l$ the lengths of hooks along the main diagonal. 
	Then the following statements hold.
\begin{enumerate}[(i)]
	\item If $\sigma\in A_n$ and $\sigma$ is not of shape $[a_1^1a_2^1\ldots a_l^1]$ then $\chi^{\lambda\pm}(\sigma)=\cfrac{1}{2}\chi^{\lambda}(\sigma)$ and this number is an integer.
	\item If $\sigma\in A_n$ and $\sigma$ is of shape $[a_1^1a_2^1\ldots a_l^1]$ then
	$$\chi^{\lambda\pm}(\sigma)\in\big\{\cfrac{1}{2}\big(\chi^\lambda(\sigma)+\sqrt{\chi^\lambda(\sigma)\cdot\Pi_{i}a_i}\big), \cfrac{1}{2}\big(\chi^\lambda(\sigma)-\sqrt{\chi^\lambda(\sigma)\cdot\Pi_{i}a_i}\big)\big\}.$$
\end{enumerate}	
\end{prop}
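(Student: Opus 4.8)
The plan is to derive this from Clifford theory for the index-two subgroup $A_n\trianglelefteq S_n$ together with the Murnaghan--Nakayama rule. Write $\chi^+=\chi^{\lambda+}$ and $\chi^-=\chi^{\lambda-}$. By Proposition~\ref{p:reps_of_An}(ii) we have $\chi^++\chi^-=\chi^\lambda$ on $A_n$, and since $[\lambda]^-$ is the $(12)$-conjugate of $[\lambda]^+$, the relation $\chi^-(\sigma)=\chi^+((12)\sigma(12))$ holds for all $\sigma\in A_n$. Consequently the difference $\delta:=\chi^+-\chi^-$ is antisymmetric under conjugation by $(12)$, i.e. $\delta((12)\sigma(12))=-\delta(\sigma)$, so $\delta$ vanishes on every $A_n$-class that is $S_n$-stable. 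An $S_n$-class contained in $A_n$ fails to split in $A_n$ unless its cycle type consists of distinct odd parts; hence $\delta$ is supported on the classes whose type lies in the set $\mathcal{D}$ of partitions of $n$ into distinct odd parts. For $\sigma$ outside these classes this gives $\chi^+(\sigma)=\chi^-(\sigma)=\frac12\chi^\lambda(\sigma)$, and this value, being a character value, is an algebraic integer equal to the rational number $\frac12\chi^\lambda(\sigma)$ (Proposition~\ref{l:rep_Sn}(i)), hence lies in $\mathbb{Z}$. This already settles part~(i) on all non-split classes.

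It then remains to analyse $\delta$ on the split classes and to show it is concentrated on the single type $\tau(\lambda)=(a_1,\ldots,a_l)$ coming from the diagonal hooks. First I record the combinatorics: since $\lambda=\lambda'$, the $i$-th diagonal hook has leg length $l_{ii}=\lambda'_i-i=\lambda_i-i$ equal to its arm length, so $a_i=h^\lambda_{ii}=2(\lambda_i-i)+1$ is odd, the $a_i$ strictly decrease, and $\sum_i a_i=n$. Thus $\tau(\lambda)\in\mathcal{D}$, and $\lambda\mapsto\tau(\lambda)$ is the classical bijection between self-associated partitions and partitions into distinct odd parts; in particular the number of self-associated $\lambda$ equals the number of split class pairs. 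Because $\chi^+,\chi^-$ are distinct irreducible characters of $A_n$, the family $\{\delta_\lambda:\lambda=\lambda'\}$ is pairwise orthogonal with $\langle\delta_\lambda,\delta_\lambda\rangle=2$, and it lies in the space of antisymmetric class functions supported on split classes, whose dimension equals the number of split pairs. Hence $\{\delta_\lambda\}$ is an orthogonal basis of that space.

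The crux is to upgrade this to the assertion that each $\delta_\lambda$ is a scalar multiple of the single indicator-difference $e_{\tau(\lambda)}=\mathbf{1}_{C^+}-\mathbf{1}_{C^-}$ of the pair of classes of type $\tau(\lambda)$. Expanding $\delta_\lambda=\sum_{\tau\in\mathcal{D}}c_{\lambda\tau}\,e_\tau$, orthogonality forces the matrix with entries $c_{\lambda\tau}$ divided by the square root of the product of the parts of $\tau$ to be unitary; but this alone does not make it diagonal. To force diagonality I would compute $c_{\lambda\tau}=\delta_\lambda$ on the $+$-representative of $\tau$ via the Murnaghan--Nakayama rule and establish a triangularity, namely $c_{\lambda\tau}=0$ unless $\tau$ precedes $\tau(\lambda)$ in a suitable dominance-type order on $\mathcal{D}$, with the extremal entry nonzero. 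A unitary matrix that is triangular is diagonal, which pins $\delta_\lambda$ to the class $\tau(\lambda)$. I expect this Murnaghan--Nakayama bookkeeping --- controlling which successions of rim-hook removals from $T^\lambda$ can produce a sequence of hooks of a prescribed distinct-odd type --- to be the main obstacle.

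Finally I would evaluate everything on $\sigma$ of type $\tau(\lambda)$. Peeling the diagonal rim hooks $R^\lambda_{11},R^\lambda_{22},\ldots$ in turn keeps the remaining diagram self-associated, and the hook $R^\lambda_{ii}$ of length $a_i$ contributes the sign $(-1)^{l_{ii}}$; since $\sum_i l_{ii}=\sum_i\frac{a_i-1}{2}=\frac{n-l}{2}$, the Murnaghan--Nakayama rule gives $\chi^\lambda(\sigma)=(-1)^{(n-l)/2}\in\{\pm1\}$. With $\delta_\lambda$ supported only on this pair of classes, each of size $|A_n|/(a_1\cdots a_l)$ (the distinct odd parts put the centralizer inside $A_n$), the norm identity $\langle\delta_\lambda,\delta_\lambda\rangle=2$ forces $|\delta_\lambda(\sigma)|^2=a_1\cdots a_l$. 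A short reality check --- whether $[\lambda]^+$ is real or complex-conjugate to $[\lambda]^-$, governed by $(n-l)/2\bmod 2$ --- fixes the sign and yields $\delta_\lambda(\sigma)=\sqrt{\chi^\lambda(\sigma)\cdot a_1\cdots a_l}$. Writing $\chi^\pm(\sigma)=\frac12\big(\chi^\lambda(\sigma)\pm\delta_\lambda(\sigma)\big)$ then produces exactly the two values of part~(ii), while the vanishing of $\delta_\lambda$ on every other split class completes part~(i).
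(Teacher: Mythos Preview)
The paper does not prove this proposition; it is quoted verbatim from James and Kerber as background, so there is no in-paper argument to compare your attempt against. Judged on its own, your Clifford-theoretic setup is correct: the restriction $\chi^++\chi^-=\chi^\lambda$, the antisymmetry of $\delta_\lambda=\chi^+-\chi^-$ under $(12)$-conjugation, the support of $\delta_\lambda$ inside the split classes (distinct odd cycle types), the norm $\langle\delta_\lambda,\delta_\lambda\rangle=2$, and the unitarity of the normalised matrix $\big(c_{\lambda\tau}/\sqrt{\prod b_i(\tau)}\big)$ are all right. Your Murnaghan--Nakayama computation $\chi^\lambda(\sigma_{\tau(\lambda)})=(-1)^{(n-l)/2}$ is also correct, since the $(1,1)$-hook is the unique hook of length $a_1$ in a self-conjugate diagram, forcing the removal sequence; and the reality analysis at the end (tying whether $\delta_\lambda(\sigma)$ is real or purely imaginary to whether $\sigma^{-1}$ lies in the same $A_n$-class as $\sigma$, which is governed by the parity of $(n-l)/2$) is sound.

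The genuine gap is exactly the one you flag yourself: showing that $\delta_\lambda$ vanishes on every split class of type $\tau\neq\tau(\lambda)$. You propose to obtain this from a triangularity of the unitary change-of-basis matrix, but you do not establish that triangularity, and it is not a routine Murnaghan--Nakayama exercise. Note that even the weaker statement $\chi^\lambda(\sigma_\tau)=0$ for $\tau\neq\tau(\lambda)$ (which one can prove, with some work, by analysing odd hooks in self-conjugate diagrams) would only yield $\chi^+(\sigma_\tau)=-\chi^-(\sigma_\tau)$, hence $\delta_\lambda(\sigma_\tau)=2\chi^+(\sigma_\tau)$, which is not a priori zero. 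This support statement is the substantive content of the theorem; in James--Kerber it is obtained by a concrete construction rather than by a soft orthogonality/triangularity argument. Without it the norm identity $\sum_\tau|c_{\lambda\tau}|^2/\prod b_i(\tau)=1$ does not isolate the $\tau(\lambda)$ term, and your final deduction of $|\delta_\lambda(\sigma_{\tau(\lambda)})|^2=\prod a_i$ from that identity is circular.
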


Finally we formulate three famous rules that help to evaluate characters on the elements of $S_n$.

\begin{prop}$\cite[Theorem~2.4.3]{James}$(The Branching Law) If $\lambda=(\lambda_1,\lambda_2,\ldots,)$ is a partition of $n$ then for the restriction of $[\lambda]$ to the stabilizer $S_{n-1}$ of the point~$n$ we have
$$[\lambda]\downarrow S_{n-1}=\sum_{i:\lambda_i>\lambda_{i+1}}[\lambda^{i-}], $$
where $\lambda^{i-}$ is a partition of $n-1$ equal to $(\lambda_1,\ldots,\lambda_{i-1},\lambda_i-1,\lambda_{i+1},\ldots)$.
\end{prop}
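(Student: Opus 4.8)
The plan is to prove the equivalent statement at the level of characters: since over $\mathbb{C}$ a representation is determined up to equivalence by its character, it suffices to establish the identity
\[
\chi^\lambda(\pi)=\sum_{i:\,\lambda_i>\lambda_{i+1}}\chi^{\lambda^{i-}}(\pi)\qquad\text{for every }\pi\in S_{n-1},
\]
where on the left $\pi$ is regarded as the element of $S_n$ fixing the point $n$. By the orthonormality of the irreducible characters together with Frobenius reciprocity, this is in turn equivalent to computing, for each $\mu\vdash n-1$, the multiplicity
\[
\big\langle \chi^\lambda\!\downarrow S_{n-1},\,\chi^\mu\big\rangle_{S_{n-1}}
=\big\langle \chi^\lambda,\,\operatorname{Ind}_{S_{n-1}}^{S_n}\chi^\mu\big\rangle_{S_n},
\]
and showing that it equals $1$ when $\mu=\lambda^{i-}$ for some $i$ with $\lambda_i>\lambda_{i+1}$ and $0$ otherwise. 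Thus the whole statement reduces to understanding a single induction from $S_{n-1}$.

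First I would pass to symmetric functions through the characteristic map $\operatorname{ch}$, which is an isometric ring isomorphism from the direct sum of the character rings $\bigoplus_n R(S_n)$ onto the ring $\Lambda$ of symmetric functions, carries the irreducible character $\chi^\lambda$ to the Schur function $s_\lambda$, and turns the induction product into ordinary multiplication. Under $\operatorname{ch}$ the multiplicity above becomes the Hall inner product $\langle s_\lambda,\,s_\mu\cdot s_{(1)}\rangle$, because inducing from $S_{n-1}\times S_1$ corresponds to multiplying by $\operatorname{ch}(\mathbf 1_{S_1})=s_{(1)}$. The Pieri rule now finishes the computation: $s_\mu\cdot s_{(1)}=\sum_{\nu}s_\nu$, the sum running over all partitions $\nu\vdash n$ obtained from $\mu$ by adding a single box. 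Hence $\langle s_\lambda,\,s_\mu s_{(1)}\rangle$ is $1$ exactly when $\lambda$ is obtained from $\mu$ by adding a box, i.e. when $\mu$ is obtained from $\lambda$ by deleting a removable corner, and these removable corners are precisely the rows $i$ with $\lambda_i>\lambda_{i+1}$, giving $\mu=\lambda^{i-}$. This yields exactly the asserted decomposition.

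The main obstacle is the input package behind the characteristic map---the Frobenius formula $\operatorname{ch}(\chi^\lambda)=s_\lambda$ together with the isometry property---and the Pieri rule; both are standard but not lightweight, so in a self-contained write-up I would either cite them or derive Pieri directly from the Jacobi--Trudi determinant. As an independent check (and, in the rim-hook language set up above, essentially a one-line alternative proof), I would apply the Murnaghan--Nakayama rule to the $1$-cycle $(n)$ coming from the fixed point of $\pi$: the rim hooks of size $1$ are exactly the removable corner boxes, each of leg length $0$ and hence sign $+1$, so stripping this $1$-cycle gives $\chi^\lambda(\pi)=\sum_{i:\lambda_i>\lambda_{i+1}}\chi^{\lambda^{i-}}(\pi)$ at once. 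Finally I would verify that the dimensions match via the hook-length formula (Proposition~\ref{l:rep_Sn}) and the bijection between standard $\lambda$-tableaux and pairs consisting of a removable corner $b$ of $\lambda$ and a standard $(\lambda\setminus b)$-tableau, obtained by reading off the box containing $n$.
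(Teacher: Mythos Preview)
Your argument is correct. Both routes you outline are standard and valid: the symmetric-function approach via the characteristic map and Pieri's rule computes the multiplicities $\langle\chi^\lambda\!\downarrow S_{n-1},\chi^\mu\rangle$ cleanly, and the alternative via the Murnaghan--Nakayama formula applied to the trivial $1$-cycle at the fixed point $n$ is even shorter (hooks of length $1$ are exactly the removable corners, each with leg length $0$).

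However, there is nothing to compare against: the paper does not give its own proof of this proposition. It is stated as a quotation of \cite[Theorem~2.4.3]{James} and used as a black box, like the other background facts in Section~2 (the Murnaghan--Nakayama formula, Frobenius's formula, the hook-length formula). So your write-up supplies a proof where the paper deliberately has none. One small remark: your second route invokes the Murnaghan--Nakayama formula, which in the paper is recorded as Proposition~\ref{NM-formula} \emph{after} the Branching Law; in a self-contained treatment you would want to make sure your chosen derivation of Murnaghan--Nakayama does not itself rely on branching, or else lead with the Pieri/characteristic-map argument, which is logically independent.
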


\begin{prop}\cite[2.4.7]{James}(The Murnaghan--Nakayama formula)\label{NM-formula}
Let $n$	and $k$ be integers with $k\leq n$. Consider $\sigma\in S_n$
such that $\sigma=\pi\cdot\rho$, where $\pi$ fixes the symbols $1,2,\ldots,n-k$ and $\rho$
is the cycle $(n-k+1,n-k+2,\ldots,n)$ of length $k$. Then
$$\chi^{\lambda}(\sigma)=\sum_{i,j:h^{\lambda}_{ij}=k}(-1)^{l^{\lambda}_{ij}}
\chi^{\lambda\setminus{R_{ij}}}(\pi),$$
where $\lambda\setminus{R_{ij}}$ denotes the partition obtained from $\lambda$ by removing the rim hook~ 
$R_{ij}$.
\end{prop}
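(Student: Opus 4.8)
The plan is to deduce this character-theoretic identity from the classical \emph{Frobenius character formula}, which expresses $\chi^\lambda_\mu$ (the value of $\chi^\lambda$ on the class of cycle type $\mu$) as a coefficient in the product of a Vandermonde determinant with a power-sum symmetric polynomial. Fix $\ell$ at least the number of parts of $\lambda$, write $\delta=(\ell-1,\ell-2,\ldots,1,0)$, let $a_\delta=\prod_{p<q}(x_p-x_q)$ be the Vandermonde in $x_1,\ldots,x_\ell$, and set $p_m=\sum_p x_p^m$. The Frobenius formula asserts
$$\chi^\lambda_\mu=\bigl[x^{\lambda+\delta}\bigr]\,a_\delta\,p_\mu,\qquad p_\mu=\prod_r p_{\mu_r},$$
where $\bigl[x^{\beta}\bigr]$ denotes extraction of the coefficient of $x_1^{\beta_1}\cdots x_\ell^{\beta_\ell}$. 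I would take this as the starting point, together with the bialternant identity $a_\delta s_\tau=a_{\tau+\delta}$ and the Schur expansion $p_\nu=\sum_\tau \chi^\tau_\nu\,s_\tau$, since it packages the character values and the combinatorics of partitions into a single algebraic object.

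Write $\nu$ for the cycle type of $\pi$, a partition of $n-k$; then $\sigma$ has cycle type $\mu=(\nu,k)$ and $p_\mu=p_k\cdot p_\nu$. First I would record the elementary shift produced by multiplication by $p_k=\sum_p x_p^k$: for any polynomial $g$,
$$\bigl[x^{\lambda+\delta}\bigr]\,p_k\,g=\sum_{p=1}^{\ell}\bigl[x^{\lambda+\delta-k e_p}\bigr]\,g,$$
with $e_p$ the $p$-th coordinate vector. Applying this with $g=a_\delta p_\nu=\sum_\tau \chi^\tau_\nu\,a_{\tau+\delta}$ reduces the whole computation to deciding, for each index $p$ and each $\tau\vdash n-k$, when the vector $\lambda+\delta-k e_p$ is a rearrangement of $\tau+\delta$, and with what sign it contributes: the coefficient of a monomial in $a_{\tau+\delta}=\sum_{w}\operatorname{sgn}(w)\,x^{w(\tau+\delta)}$ is $\pm1$ or $0$ according to whether the exponent is such a rearrangement.

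The combinatorial heart of the argument is the translation of the operation ``lower one entry of $\lambda+\delta$ by $k$, then re-sort'' into the language of Young diagrams. The entries of $\lambda+\delta$ are the first-column (beta) numbers of $\lambda$, and lowering one of them by $k$ to an unoccupied value is exactly an abacus bead-move of length $k$. I would establish the standard dictionary that such moves are in bijection with the hooks $H^\lambda_{ij}$ of length $k$, that the resulting partition is $\tau=\lambda\setminus R_{ij}$, and that the sign incurred by re-sorting $\lambda+\delta-k e_p$ into strictly decreasing order equals $(-1)^{l^{\lambda}_{ij}}$, the parity of the leg length. Feeding this bijection and these signs back into the coefficient comparison, and identifying $\chi^\tau_\nu=\chi^{\lambda\setminus R_{ij}}(\pi)$, yields precisely
$$\chi^{\lambda}(\sigma)=\sum_{i,j:\,h^{\lambda}_{ij}=k}(-1)^{l^{\lambda}_{ij}}\,\chi^{\lambda\setminus R_{ij}}(\pi).$$

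I expect the main obstacle to be exactly this combinatorial dictionary: proving cleanly, and with the correct signs, that a length-$k$ bead-move on the beta-set removes a size-$k$ rim hook and that the re-sorting sign is $(-1)^{l^{\lambda}_{ij}}$ (equivalently, that the number of rows spanned by $R_{ij}$ minus one is the leg length). Everything preceding it is formal generating-function manipulation; the bead-move/rim-hook correspondence and its sign carry the genuine content of the formula. An alternative route, which front-loads the same difficulty into symmetric-function language, would be to use the characteristic map $\mathrm{ch}(\chi^\lambda)=s_\lambda$ and to prove the equivalent Pieri-type identity $p_k\,s_\tau=\sum_\lambda(-1)^{\operatorname{ht}(\lambda/\tau)}s_\lambda$ (summed over $\lambda$ with $\lambda/\tau$ a border strip of size $k$) from the Jacobi--Trudi determinant, then read off the recursion by comparing coefficients of $s_\lambda$ in $p_k p_\nu=\sum_\tau\chi^\tau_\nu\,p_k s_\tau$.
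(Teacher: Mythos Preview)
The paper does not give a proof of this proposition at all: it is quoted from \cite[2.4.7]{James} as a known tool and used without argument. So there is no ``paper's own proof'' to compare against.

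That said, your outline is one of the standard derivations and is correct in its essentials. Starting from the Frobenius formula, peeling off the factor $p_k$ shifts one coordinate of the exponent $\lambda+\delta$ down by $k$, and the only nonzero contributions come from shifts that land on a rearrangement of some $\tau+\delta$ with $\tau\vdash n-k$; the sign of that rearrangement is the re-sorting sign. You correctly identify the substantive step as the dictionary between a length-$k$ move on the first-column beta numbers (abacus beads) and removal of a rim hook of size $k$, together with the fact that the number of beads jumped over---hence the sign---equals the leg length $l^\lambda_{ij}$. This is exactly the argument in, e.g., Macdonald's \emph{Symmetric Functions and Hall Polynomials} (I.7, Example~5) and is implicit in the Jacobi--Trudi route you mention as an alternative. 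One small caveat: the statement in the paper contains a typo (it should be $\pi$ fixing $n-k+1,\ldots,n$, i.e.\ $\pi\in S_{n-k}$), which you have tacitly corrected by taking $\nu$ to be the cycle type of $\pi$ as a partition of $n-k$; your reading is the intended one.
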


\begin{prop}$\cite[4.10]{FulHar}$(Frobenius's Formula)\label{p:frob_formula}
Let $n$ be a positive integer and $\lambda=(\lambda_1,\lambda_2,\ldots,\lambda_k)$ be a partition of $n$. Put $l_1=\lambda_1+k-1$, $l_2=\lambda_1+k-2$,$\ldots$, $l_k=\lambda_k$. Introduce the power sum polynomials  $P_j(x)=x_1^j+x_2^j+\ldots+x_k^j$ and the
discriminant polynomial $\Delta(x)=\prod_{i<j}(x_i-x_j)$.
If $\sigma$ is of shape $[1^{i_1}2^{i_2}\ldots n^{i_n}]$ then
$$\chi^\lambda(\sigma)=[\Delta(x)\cdot\prod_j P_j(x)^{i_j}]_{(l_1,l_2,\ldots,l_k)},$$
where $[f(x)]_{(l_1,l_2,\ldots,l_k)}$ stands for the coefficient of $x_1^{l_1}x_2^{l_2}\ldots x_k^{l_k}$ in $f(x)$.
\end{prop}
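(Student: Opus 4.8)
The plan is to deduce the formula from two classical ingredients: the expansion of power-sum symmetric polynomials in the basis of Schur polynomials, with the irreducible characters of $S_n$ as coefficients, and the bialternant (Jacobi) expression of a Schur polynomial as a quotient of alternants. First I would record the fundamental identity
$$\prod_j P_j(x)^{i_j}=\sum_{\nu}\chi^{\nu}(\sigma)\,s_\nu(x),$$
where the sum runs over partitions $\nu$ of $n$ with at most $k$ parts, $s_\nu(x)$ is the Schur polynomial in $x_1,\ldots,x_k$, and $\sigma$ has shape $[1^{i_1}2^{i_2}\ldots n^{i_n}]$. The cleanest route to this identity is Schur--Weyl duality: letting $g=\operatorname{diag}(x_1,\ldots,x_k)$ act on $V=\mathbb{C}^k$ and computing the trace of $g^{\otimes n}\circ\sigma$ on $V^{\otimes n}$ in two ways. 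Running over the cycles of $\sigma$ gives $\prod_j P_j(x)^{i_j}$ on one side, while the decomposition $V^{\otimes n}=\bigoplus_\nu S_\nu(V)\otimes W_\nu$, with $S_\nu(V)$ the $GL_k$-module of character $s_\nu$ and $W_\nu$ the Specht module of character $\chi^\nu$, yields $\sum_\nu s_\nu(x)\chi^\nu(\sigma)$ on the other. Alternatively one invokes the Frobenius characteristic map, under which the $s_\nu$ are precisely the images of the $\chi^\nu$.

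Second I would bring in the bialternant formula $s_\nu(x)\cdot\Delta(x)=a_{\nu+\delta}(x)$, where $\delta=(k-1,k-2,\ldots,0)$ and
$$a_{\nu+\delta}(x)=\det\bigl(x_i^{\,\nu_j+k-j}\bigr)_{1\le i,j\le k}=\sum_{w\in S_k}\operatorname{sgn}(w)\prod_i x_i^{\,\nu_{w(i)}+k-w(i)},\qquad a_\delta(x)=\Delta(x).$$
Multiplying the power-sum expansion through by $\Delta(x)$ converts it into
$$\Delta(x)\cdot\prod_j P_j(x)^{i_j}=\sum_\nu\chi^\nu(\sigma)\,a_{\nu+\delta}(x).$$

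The final step is to extract the coefficient of $x_1^{l_1}x_2^{l_2}\cdots x_k^{l_k}$. Since $l_i=\lambda_i+k-i$, the exponent vector $(l_1,\ldots,l_k)$ equals $\lambda+\delta$ and is strictly decreasing. This monomial occurs in $a_{\nu+\delta}$ exactly when $(l_1,\ldots,l_k)$ is a rearrangement of the (strictly decreasing) vector $\nu+\delta$; equality of the two decreasing sequences forces $\nu=\lambda$, and the permutation realizing it is the identity, contributing the coefficient $+1$. Hence every term with $\nu\neq\lambda$ drops out, and we obtain $\bigl[\Delta(x)\cdot\prod_j P_j(x)^{i_j}\bigr]_{(l_1,\ldots,l_k)}=\chi^\lambda(\sigma)$, which is the asserted formula.

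The main obstacle is the first ingredient. The bialternant identity and the coefficient extraction are purely formal manipulations of symmetric polynomials, but the expansion of power sums into Schur polynomials with character coefficients carries the entire content of the character theory of $S_n$. Establishing it rigorously requires either the double-commutant theorem underlying Schur--Weyl duality, or the construction of the characteristic isometry between class functions on $S_n$ and symmetric functions together with the orthonormality of the irreducible characters $\chi^\nu$. Once that single identity is in hand, the remainder of the argument is bookkeeping.
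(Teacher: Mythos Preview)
The paper does not prove this proposition at all: it is quoted verbatim from Fulton--Harris \cite[4.10]{FulHar} as a classical tool, with no argument supplied. So there is no ``paper's own proof'' to compare against.

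Your sketch is the standard derivation and is correct in outline. The three ingredients---the character expansion $\prod_j P_j(x)^{i_j}=\sum_\nu\chi^\nu(\sigma)s_\nu(x)$, the bialternant identity $s_\nu\cdot\Delta=a_{\nu+\delta}$, and the observation that the strictly decreasing exponent vector $(l_1,\ldots,l_k)=\lambda+\delta$ appears with coefficient $1$ in $a_{\lambda+\delta}$ and not at all in any other $a_{\nu+\delta}$---combine exactly as you describe, and this is essentially the argument in Fulton--Harris itself. Your diagnosis that the entire representation-theoretic weight sits in the first identity is also accurate; the rest is symmetric-function bookkeeping. If you were writing this up in full you would need to commit to one of the two justifications you mention (Schur--Weyl duality or the Frobenius characteristic map) and carry it through, but as a proof plan there is nothing missing.
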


\section{Preliminaries}

Here is our main tool to estimate character values on permutations.
\begin{lemma}$\cite[Theorem~1.1]{FominLulov}$\label{l:Fom_Lul}
 Suppose that $n=rm$, where $r$ and $m$ are integers, and $\sigma$ is of shape $[r^m]$.
Assume that $\rho:S_n\rightarrow V$	is an ordinary irreducible representation of $S_n$
and $\chi_V$ is the character of $\rho$. Then $$|\chi_V(\sigma)|\leq \frac{m!\cdot r^m}{(n!)^{1/r}}(\chi_V(1))^{1/r}.$$

\end{lemma}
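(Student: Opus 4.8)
The plan is to evaluate $\chi_V(\sigma)$ by applying the Murnaghan--Nakayama rule (Proposition~\ref{NM-formula}) $m$ times, stripping off one $r$-cycle of $\sigma$ at each step. Write $\lambda$ for the partition of $n=rm$ indexing $\rho$. Each application removes a rim hook of length $r$, so after $m$ steps $\chi_V(\sigma)$ is a signed count of the sequences of $m$ rim hooks of length $r$ that reduce $T^{\lambda}$ to the empty diagram. In particular $\chi_V(\sigma)=0$ whenever the $r$-core of $\lambda$ is nonempty, and then the asserted bound holds trivially since its right-hand side is positive. When the $r$-core is empty, all the signs $(-1)^{l_{ij}}$ accumulated along such a sequence multiply to the same value (the total sign of a maximal $r$-rim-hook decomposition is an invariant of $\lambda$), so $|\chi_V(\sigma)|$ equals the number $N_r(\lambda)$ of $r$-rim-hook standard tableaux of shape $\lambda$; henceforth assume the $r$-core is empty.

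Next I would feed $\lambda$ through the standard combinatorics of $r$-cores and $r$-quotients, which is most transparent on the abacus: an $r$-rim-hook removal is a single upward bead move along one runner, and the $r$ runners carry exactly the components $\lambda^{(0)},\dots,\lambda^{(r-1)}$ of the $r$-quotient, of sizes $m_t=|\lambda^{(t)}|$ with $\sum_t m_t=m$. This identifies $N_r(\lambda)$ with the number of ways to interleave standard Young tableaux of the components, namely
\[
N_r(\lambda)=\binom{m}{m_0,\dots,m_{r-1}}\prod_{t=0}^{r-1}f^{\lambda^{(t)}},
\]
where $f^{\mu}$ denotes the number of standard Young tableaux of shape $\mu$. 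The same bead picture shows that dividing by $r$ those hook lengths of $\lambda$ that are divisible by $r$ yields exactly the disjoint union of the hook lengths of $\lambda^{(0)},\dots,\lambda^{(r-1)}$; combining this with the hook-length formula (Proposition~\ref{l:rep_Sn}(ii)) applied to $\lambda$ and to each component, together with $\sum_t m_t=m$, collapses the displayed product into the closed form
\[
|\chi_V(\sigma)|=N_r(\lambda)=\frac{m!\,r^{m}}{\prod_{(i,j):\,r\mid h^{\lambda}_{ij}}h^{\lambda}_{ij}}.
\]

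It remains to bound the denominator from below. Since $\prod_{(i,j)\in T^{\lambda}}h^{\lambda}_{ij}=n!/\chi_V(1)$ by the hook-length formula, the desired estimate is equivalent to the purely combinatorial inequality
\[
\prod_{(i,j):\,r\nmid h^{\lambda}_{ij}}h^{\lambda}_{ij}\ \le\ \Bigl(\ \prod_{(i,j):\,r\mid h^{\lambda}_{ij}}h^{\lambda}_{ij}\ \Bigr)^{\!r-1}
\]
for a partition $\lambda$ of $rm$ with empty $r$-core. Proving this last inequality is what I expect to be the main obstacle: a term-by-term comparison is hopeless --- already for $\lambda=(2,2)$, $r=2$ the two hooks divisible by $2$ equal $2$ while some hook equals $3$ --- so one must compare the products as a whole. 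I would attack it by a convexity/rearrangement argument on the abacus --- each hook of $\lambda$ is a pair consisting of a bead and a strictly lower gap, the hooks divisible by $r$ being precisely the pairs on a common runner, and one shows that the product of the cross-runner differences is dominated by the $(r-1)$st power of the product of the within-runner differences, the extremal configurations being those whose runners carry rectangular diagrams --- or, alternatively, by induction on $m$, removing one corner $r$-rim hook (a single bead move) and tracking the resulting change in the two hook-length products. Once this inequality is established, the bound follows at once from the closed form for $|\chi_V(\sigma)|$.
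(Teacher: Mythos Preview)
The paper does not prove this lemma: it is quoted verbatim from Fomin--Lulov and used as a black box, so there is no in-paper argument to compare against. What you have written is essentially a reconstruction of Fomin and Lulov's own proof, and the reduction you carry out is correct: the Murnaghan--Nakayama expansion, the vanishing when the $r$-core is nonempty, the sign-invariance of maximal $r$-rim-hook decompositions, the $r$-quotient formula
\[
N_r(\lambda)=\binom{m}{m_0,\dots,m_{r-1}}\prod_t f^{\lambda^{(t)}}=\frac{m!\,r^m}{\prod_{r\mid h^{\lambda}_{ij}}h^{\lambda}_{ij}},
\]
and the equivalence of the asserted bound with
\[
\prod_{r\nmid h^{\lambda}_{ij}}h^{\lambda}_{ij}\ \le\ \Bigl(\prod_{r\mid h^{\lambda}_{ij}}h^{\lambda}_{ij}\Bigr)^{r-1}
\]
are all right.

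The gap is exactly where you flag it: you do not prove the hook-product inequality, and neither of your two sketches is close to a proof. The ``convexity/rearrangement on the abacus'' idea is not made precise, and the claim about extremal configurations being rectangular runners is unsupported. The induction on $m$ by removing a corner $r$-rim hook is more promising, but the effect of a single bead move on the two products is not monotone in any obvious way (a move changes $r$ within-runner hooks and many cross-runner hooks simultaneously), so the inductive step needs a genuine argument you have not supplied. In Fomin and Lulov's paper this inequality \emph{is} the theorem; everything before it is standard $r$-core/$r$-quotient combinatorics. So as it stands your proposal is a correct reduction to the known hard step, not a proof.
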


To estimate factorials, we use the following well-known result due to H.\,Robbins.
\begin{lemma}$\cite{Rob55}$\label{l:Stirling} Let $n\geq1$ be an integer. Then
$$\sqrt{2\pi}\cdot n^{n+1/2}\cdot e^{-n}\cdot e^{1/(12n+1)}<n!<\sqrt{2\pi}\cdot n^{n+1/2}\cdot e^{-n}\cdot e^{1/(12n)}.$$
\end{lemma}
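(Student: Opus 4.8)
The plan is to take logarithms and reduce the two-sided estimate to the monotone convergence of two explicit sequences. Writing $d_n=\log(n!)-\left(n+\tfrac12\right)\log n+n$, the claimed inequality is equivalent, after dividing by $n^{n+1/2}e^{-n}$ and taking logs, to
$$\log\sqrt{2\pi}+\frac{1}{12n+1}<d_n<\log\sqrt{2\pi}+\frac{1}{12n}.$$
So it suffices to show that $d_n-\tfrac{1}{12n}$ is strictly increasing, that $d_n-\tfrac{1}{12n+1}$ is strictly decreasing, and that both sequences tend to the common limit $\log\sqrt{2\pi}$; the monotone sequences then lie strictly below, respectively above, their limit, which yields the two inequalities at once. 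The value of the limit is the only genuinely nonelementary ingredient, and I would pin it down to $\log\sqrt{2\pi}$ by the Wallis product once the monotonicity (hence convergence) is in hand.

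First I would compute the telescoping difference exactly. A direct calculation gives $d_n-d_{n+1}=\left(n+\tfrac12\right)\log\frac{n+1}{n}-1$, and substituting $t=\frac{1}{2n+1}$ (so that $n+\tfrac12=\frac{1}{2t}$ and $\frac{n+1}{n}=\frac{1+t}{1-t}$) together with the series $\tfrac12\log\frac{1+t}{1-t}=\sum_{k\ge0}\frac{t^{2k+1}}{2k+1}$ turns this into the clean positive series
$$d_n-d_{n+1}=\sum_{k\ge1}\frac{t^{2k}}{2k+1},\qquad t=\frac{1}{2n+1}.$$
In particular each difference is positive, so both auxiliary sequences are monotone as soon as the two one-sided comparisons below hold.

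For the upper bound I would bound the series above by the geometric series $\tfrac13\sum_{k\ge1}t^{2k}=\frac{t^2}{3(1-t^2)}$, using $\frac{1}{2k+1}\le\frac13$; since $t=\frac{1}{2n+1}$, this equals $\frac{1}{12n(n+1)}=\frac{1}{12n}-\frac{1}{12(n+1)}$, which is exactly the telescoping bound making $d_n-\tfrac{1}{12n}$ increasing. For the lower bound the comparison has to be chosen more carefully: I would use $\frac{1}{2k+1}\ge\frac13\left(\tfrac35\right)^{k-1}$ (which one checks holds for every $k\ge1$, with equality at $k=1,2$), giving $d_n-d_{n+1}>\frac{t^2/3}{1-\frac35 t^2}=\frac{5}{6(10n^2+10n+1)}$. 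It then remains to verify the polynomial inequality $\frac{5}{6(10n^2+10n+1)}>\frac{12}{(12n+1)(12n+13)}=\frac{1}{12n+1}-\frac{1}{12(n+1)+1}$, which reduces to $120n-7>0$ and hence holds for all $n\ge1$; this makes $d_n-\tfrac{1}{12n+1}$ decreasing.

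The main obstacle is precisely the lower bound: unlike the upper bound, where the crude estimate $\frac{1}{2k+1}\le\frac13$ already telescopes perfectly, one must locate a geometric minorant of the tail $\sum_{k\ge1}t^{2k}/(2k+1)$ whose sum still dominates the sharper telescoping quantity $\frac{1}{12n+1}-\frac{1}{12(n+1)+1}$. The ratio $\tfrac35$ is the natural choice because it matches the first two terms of the series, and the whole argument then collapses to the single linear inequality $120n-7>0$; identifying the limiting constant as $\log\sqrt{2\pi}$ via Wallis, and finally exponentiating, are routine once this comparison is established.
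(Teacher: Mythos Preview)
Your argument is correct and is precisely Robbins' original proof. The paper itself does not supply a proof of this lemma: it is stated with a citation to \cite{Rob55} and used as a black box, so there is no in-paper argument to compare against. Your telescoping analysis of $d_n-d_{n+1}$, the geometric upper bound yielding the exact collapse $\frac{1}{12n}-\frac{1}{12(n+1)}$, the $(3/5)^{k-1}$ minorant for the lower bound reducing to $120n-7>0$, and the identification of the limit via Wallis all check out; the strictness of the inequalities follows since the series comparisons are strict from $k\ge 3$ onward.
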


We apply these inequalities to the conclusion of Lemma~\ref{l:Fom_Lul}.

\begin{lemma}\label{l:ch_estimate} Suppose that $r$ and $m$ are integers, $n=rm$, and $\sigma$ is a permutation of shape $[r^m]$.
For every irreducible character $\chi^\lambda$ of $S_n$ we have
$$\frac{|\chi^{\lambda}(\sigma)|}{(\chi^{\lambda}(1))^{1/r}}<(2\pi)^{\frac{r-1}{2r}}r^\frac{-1}{2}n^{\frac{r-1}{2r}}e^\frac{1}{12m}.$$
\end{lemma}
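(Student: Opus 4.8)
The plan is to chain the two preceding lemmas together and simplify. Starting from Lemma~\ref{l:Fom_Lul}, divide both sides by $(\chi^\lambda(1))^{1/r}$ to get
$$\frac{|\chi^{\lambda}(\sigma)|}{(\chi^{\lambda}(1))^{1/r}}\leq \frac{m!\cdot r^m}{(n!)^{1/r}}.$$
Thus the whole problem reduces to showing $\dfrac{m!\cdot r^m}{(n!)^{1/r}}<(2\pi)^{\frac{r-1}{2r}}r^{-1/2}n^{\frac{r-1}{2r}}e^{\frac{1}{12m}}$, with $n=rm$. This is now a purely elementary estimate with no representation theory left in it.

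For that estimate I would use Lemma~\ref{l:Stirling} in the appropriate direction on each factorial: the upper bound on $m!$, namely $m!<\sqrt{2\pi}\,m^{m+1/2}e^{-m}e^{1/(12m)}$, and the lower bound on $n!$ applied inside the $r$-th root, namely $(n!)^{1/r}>\big(\sqrt{2\pi}\,n^{n+1/2}e^{-n}e^{1/(12n+1)}\big)^{1/r}$. Substituting and writing $n=rm$, the exponential parts combine as $e^{-m}\cdot e^{n/r}=e^{-m}\cdot e^{m}=1$, leaving only the stray $e^{1/(12m)}$ from the numerator (the $e^{-1/(r(12n+1))}$ coming from the denominator's lower bound is less than $1$, so dropping it only weakens the inequality in the direction we want). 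The powers of $\sqrt{2\pi}$ combine to $(2\pi)^{1/2}/(2\pi)^{1/(2r)}=(2\pi)^{(r-1)/(2r)}$, which matches the target. It then remains to check the polynomial-in-$m$ part: $m^{m+1/2}\cdot r^m$ against $n^{(n+1/2)/r}\cdot r^{-1/2}\cdot n^{(r-1)/(2r)}$. Writing $n=rm$ and taking $r$-th powers to clear the root, one compares $m^{rm+r/2}\cdot r^{rm}$ with $(rm)^{rm+1/2}\cdot r^{-r/2}\cdot (rm)^{(r-1)/2}=(rm)^{rm+r/2}\cdot r^{-r/2}$. The factor $(rm)^{rm}=r^{rm}m^{rm}$ cancels the $r^{rm}$ on the left and part of the right, reducing the comparison to $m^{r/2}$ versus $(rm)^{r/2}r^{-r/2}=m^{r/2}$ — so in fact these polynomial parts are \emph{equal}, and the strict inequality is supplied entirely by the $e^{1/(12m)}$ factor together with the discarded $e^{-1/(r(12n+1))}<1$. (I should double-check the bookkeeping on the $n^{1/(2r)}$ term, since that is where an off-by-one in the exponent is easiest to make; but the structure above shows everything is designed to cancel cleanly.)

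The main obstacle, such as it is, is purely clerical: carefully tracking the exponents of $n$, $r$, $m$, $2\pi$, and $e$ through the substitution $n=rm$ and the $r$-th root, and making sure each application of Robbins' bounds points in the direction that preserves the inequality. There is no conceptual difficulty — once the cancellations $e^{-m+n/r}=1$ and $(rm)^{rm}$-cancellation are spotted, the rest is arithmetic. I would present it as a short computation: state the reduction to the factorial inequality, substitute the two Stirling bounds, group the $2\pi$-powers, $e$-powers, and monomial powers separately, observe that the monomial powers cancel exactly and the $e$-powers leave $e^{1/(12m)}$, and conclude.
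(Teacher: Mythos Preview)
Your proposal is correct and follows essentially the same approach as the paper's own proof: reduce to bounding $\dfrac{m!\,r^m}{(n!)^{1/r}}$ via Lemma~\ref{l:Fom_Lul}, then apply Robbins' bounds (Lemma~\ref{l:Stirling}) upward on $m!$ and downward on $n!$, and simplify using $n=rm$. The only cosmetic difference is that the paper substitutes $m=n/r$ directly and cancels, whereas you clear the $r$-th root first and verify the monomial parts match; both reach the same identity, with strictness coming from the Stirling bounds (equivalently, from dropping the $e^{-1/(r(12n+1))}$ factor).
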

\begin{proof} Lemma~\ref{l:Fom_Lul} implies that
$$\frac{|\chi^{\lambda}(\sigma)|}{(\chi^{\lambda}(1))^{1/r}}\leq \frac{m!\cdot{r^m}}{(n!)^{1/r}}.$$
Using Lemma~\ref{l:Stirling}, we obtain $$\cfrac{1}{n!}<\cfrac{1}{\sqrt{2\pi}\cdot n^{n+1/2}\cdot e^{-n}\cdot e^{1/(12n+1)}}<\cfrac{1}{\sqrt{2\pi}\cdot n^{n+1/2}\cdot e^{-n}}$$
and $$m!<\sqrt{2\pi}\cdot m^{m+1/2}\cdot e^{-m}\cdot e^{1/(12m)}=(2\pi)^{1/2}\cdot(\frac{n}{r})^{m+1/2}\cdot e^{-n/r}\cdot e^{1/(12m)}.$$
Then 
$$\frac{|\chi^{\lambda}(\sigma)|}{(\chi^{\lambda}(1))^{1/r}}<\frac{(2\pi)^{1/2}\cdot{(\frac{n}{r})^{m+1/2}}\cdot{r^m}\cdot{e^{-n/r}}\cdot{e^{1/(12m)}}}{(2\pi)^{1/2r}\cdot{n^{n/r+1/{2r}}}\cdot{e^{-n/r}}}=
(2\pi)^{\frac{r-1}{2r}}r^{-1/2}n^{\frac{r-1}{2r}}e^{1/12m},$$
as required.
\end{proof}

\begin{lemma}$\cite[2.3.17]{James}$\label{l:characterOfcycle} Suppose that $\lambda$ is a partition of $n$, $\chi^\lambda$ is the character of $[\lambda]$, and $\sigma$ is a permutation of shape $[n^1]$, i.e. a length $n$ cycle.
	Then \\$\chi^\lambda(\sigma)=\begin{cases} (-1)^r & \mbox{if } \lambda=(n-r, 1^r)\text{ with } 0\leq r\leq n-1;  \\ 0 & \mbox{ otherwise.} \end{cases}$
\end{lemma}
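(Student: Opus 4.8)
The plan is to invoke the Murnaghan--Nakayama formula (Proposition~\ref{NM-formula}) in the extreme case $k=n$. Write the $n$-cycle as $\sigma=\pi\cdot\rho$ with $\rho=(1,2,\ldots,n)$; then $\pi$ fixes the $n-k=0$ remaining symbols, i.e.\ $\pi$ is the unique permutation of the empty set, and
$$\chi^{\lambda}(\sigma)=\sum_{i,j:\,h^{\lambda}_{ij}=n}(-1)^{l^{\lambda}_{ij}}\,\chi^{\lambda\setminus R_{ij}}(\pi).$$
For each index $(i,j)$ appearing in the sum the rim hook $R_{ij}$ has $h^{\lambda}_{ij}=n=|\lambda|$ boxes, so $\lambda\setminus R_{ij}$ is the empty partition and $\chi^{\lambda\setminus R_{ij}}(\pi)=1$. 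Hence $\chi^{\lambda}(\sigma)=\sum_{(i,j):\,h^{\lambda}_{ij}=n}(-1)^{l^{\lambda}_{ij}}$, and everything reduces to a combinatorial question: which diagrams $T^{\lambda}$ with $n$ boxes carry a hook of length $n$, and with what leg lengths.

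Next I would pin down that hook. Using $h^{\lambda}_{ij}=(\lambda_i-j)+(\lambda'_j-i)+1$ one sees $h^{\lambda}_{ij}\le \lambda_1+\lambda'_1-1=h^{\lambda}_{11}$, and also $h^{\lambda}_{11}=\lambda_1+k-1\le\sum_{t}\lambda_t=n$; so a hook of length $n$ can only be $H^{\lambda}_{11}$, and it has length $n$ exactly when $\lambda_1+k-1=\sum_t\lambda_t$, that is, when $\lambda_2=\dots=\lambda_k=1$. Thus $T^{\lambda}$ has a hook of length $n$ if and only if $\lambda=(n-r,1^{r})$ for some $0\le r\le n-1$, and then $H^{\lambda}_{11}$ is the unique such hook. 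Its leg length is $l^{\lambda}_{11}=\lambda'_1-1=r$, so the only surviving summand is $(-1)^{r}$ and $\chi^{\lambda}(\sigma)=(-1)^{r}$; if $\lambda$ is not a hook the index set is empty and $\chi^{\lambda}(\sigma)=0$.

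The argument has no serious obstacle; the one place needing care is the combinatorial step of the second paragraph --- the two simultaneous inequalities $h^{\lambda}_{ij}\le h^{\lambda}_{11}\le n$ and the resulting identification of hook shapes as exactly the diagrams with a full-length hook, together with the uniqueness of that hook. As sanity checks, $r=0$ gives the trivial character value $1$ and $r=n-1$ gives $(-1)^{n-1}$, which is indeed $\operatorname{sgn}$ of an $n$-cycle, matching Proposition~\ref{l:rep_Sn}(iii). Alternatively, one could derive the same result from Frobenius's formula (Proposition~\ref{p:frob_formula}) with $i_n=1$ and all other $i_j=0$, extracting the coefficient of $x_1^{l_1}\cdots x_k^{l_k}$ in $\Delta(x)\,P_n(x)$; this is a short but slightly messier route.
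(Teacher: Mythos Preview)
Your proof is correct. The paper does not supply its own proof of this lemma; it simply quotes the result from \cite[2.3.17]{James}. Your argument via the Murnaghan--Nakayama formula with $k=n$ is the standard one and is essentially how the result is proved in \cite{James}: the only possible rim hook of length $n$ is $R^{\lambda}_{11}$, it exists precisely when $\lambda$ is a hook $(n-r,1^{r})$, and then its leg length is $r$. The inequalities $h^{\lambda}_{ij}\le h^{\lambda}_{11}\le n$ and the characterization of equality are handled cleanly; nothing is missing.
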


We need the following lemma to evaluate the characters of hook shape partitions on the permutations of shape $[r^m]$.
\begin{lemma}\label{l:hook_diag}
Suppose that $n$ is an odd integer and $n=r(k+l+1)$, where $k\geq 0$, $l\geq0$, and $r$ are integers.
Consider an integer $a$ satisfying $1\leq a\leq r$. Denote by $\lambda$ the partition $(a+rk, 1^{r-a+rl})$ of $n$, so that
$T^{\lambda}$ coincides with the hook $H^{\lambda}_{1\,1}$. If $\sigma\in S_n$ is of
shape $[r^{k+l+1}]$ then $\chi^\lambda(\sigma)=(-1)^{(r-a)}{k+l\choose k}$, where ${k+l\choose k}=\frac{(k+l)!}{k!\cdot l!}$ is a binomial coefficient.	
\end{lemma}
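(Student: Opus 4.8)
The plan is to induct on $k+l$, using the Murnaghan--Nakayama formula (Proposition~\ref{NM-formula}) to peel off one rim hook of length $r$ at each step, and to reduce the statement to Pascal's identity. The first thing I would note is that, since $n=r(k+l+1)$ is odd, the integer $r$ is odd as well; this will make every sign that enters the computation equal to $+1$.

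For the base case $k=l=0$ the diagram $T^\lambda$ with $\lambda=(a,1^{r-a})$ is a hook of $n=r$ and $\sigma$ is a single $r$-cycle, so Lemma~\ref{l:characterOfcycle} gives $\chi^\lambda(\sigma)=(-1)^{r-a}=(-1)^{r-a}\binom{0}{0}$. For the inductive step I would first pin down the rim hooks of length $r$ inside $T^\lambda$ with $\lambda=(a+rk,1^{r-a+rl})$. Since $T^\lambda$ is a hook, a cell $(i,j)$ with $h^\lambda_{ij}=r$ must lie in the first row or in the first column, and the corner $(1,1)$ is excluded once $k+l\ge1$ because $h^\lambda_{11}=n>r$. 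In the first row the only candidate is $(1,\,a+r(k-1)+1)$, which genuinely occurs exactly when $k\ge1$; its rim hook is the horizontal strip made of the last $r$ boxes of row~$1$, its leg length is $0$, and removing it leaves the hook $(a+r(k-1),1^{r-a+rl})$. In the first column the only candidate is $(rl-a+2,\,1)$, which occurs exactly when $l\ge1$; its rim hook is the vertical strip made of the last $r$ boxes of column~$1$, its leg length is $\lambda'_1-(rl-a+2)=r-1$, and removing it leaves the hook $(a+rk,1^{r-a+r(l-1)})$. In either case the residual diagram is again of the form covered by the lemma, with the same $a$ and with $k+l$ lowered by one, and the remaining permutation $\pi$ has shape $[r^{k+l}]$.

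Putting this into Proposition~\ref{NM-formula} and using $(-1)^0=1$ and $(-1)^{r-1}=1$ (valid because $r$ is odd), I get
$$\chi^{(a+rk,1^{r-a+rl})}(\sigma)=\chi^{(a+r(k-1),1^{r-a+rl})}(\pi)+\chi^{(a+rk,1^{r-a+r(l-1)})}(\pi),$$
with the first term present only when $k\ge1$ and the second only when $l\ge1$. Applying the induction hypothesis to the one or two surviving terms then finishes the proof: for $k,l\ge1$ this is $(-1)^{r-a}\big(\binom{k+l-1}{k-1}+\binom{k+l-1}{k}\big)=(-1)^{r-a}\binom{k+l}{k}$ by Pascal's rule, while if one of $k,l$ vanishes the single surviving term equals $(-1)^{r-a}=(-1)^{r-a}\binom{k+l}{k}$. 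The only genuinely delicate point in all of this is the combinatorial bookkeeping of the inductive step: determining exactly which rim hooks of the hook $T^\lambda$ have length $r$, computing their leg lengths, and checking that deleting either of them returns a diagram of precisely the same shape $(a+rk',1^{r-a+rl'})$ with $k'+l'=k+l-1$ so that the recursion closes; everything after that is forced by the parity of $r$ and Pascal's triangle.
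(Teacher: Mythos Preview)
Your proof is correct and follows essentially the same approach as the paper's own: induction on $k+l$, base case via Lemma~\ref{l:characterOfcycle}, inductive step via the Murnaghan--Nakayama formula reducing to Pascal's identity. You are in fact more explicit than the paper in two places---you spell out that $r$ must be odd (so the leg-length sign $(-1)^{r-1}$ equals $1$) and you justify carefully which rim hooks of length $r$ occur and for which values of $k,l$---whereas the paper simply writes down the recursion without comment on the signs.
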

\begin{proof}
Induct on $k+l$. Denote by $\lambda(k,l)$ the partition $(a+rk, 1^{r-a+rl})$ of $n$.
If $k+l=0$ then $\chi^\lambda(\sigma)=(-1)^{(r-a)}$ by Lemma~\ref{l:characterOfcycle}.
	
Consider a permutation $\sigma'\in S_{n-r}$ of shape $[r^{k+l}]$. Assume now that $k=1$ and $l=0$. Applying the Murnaghan-Nakayama formula, we find that
$\chi^{\lambda(1,0)}(\sigma)=\chi^{\lambda(0,0)}(\sigma')=(-1)^{(r-a)}=(-1)^{(r-a)}{1\choose0}$.
The case $k=0$, $l=1$ is similar. Assume now that $k,l\geq1$. 
The Murnaghan--Nakayama formula implies that
\begin{multline*}
\chi^{\lambda(k,l)}(\sigma)=\chi^{\lambda(k-1,l)}(\sigma')+\chi^{\lambda(k,l-1)}(\sigma')=(-1)^{(r-a)}({k+l-1\choose k-1}+{k+l-1\choose l-1})=\\=(-1)^{(r-a)}({k+l-1\choose k-1}+{k+l-1\choose k})=(-1)^{(r-a)}({k+l\choose k}),
\end{multline*}
as required.
\end{proof}

%\begin{lemma} Let $n=rm$, where $r\geq2$, $\lambda=(n-1,1)$
%	and $\sigma\in S_n$ be a permutation of shape $[r^m]$.
%Then $\chi^\lambda(\sigma)=-1$.	
%%\end{lemma}
%\begin{proof} We prove by induction on $n$. If $n=2$ then $\sigma$ is a transpositon
%	and $\chi^\lambda(\sigma)=-1$ by Lemma~\ref{l:characterOfcycle}.
%Let $n>2$. If $n$ is a cycle of length $n$ then similarly we have %$\chi^\lambda(\sigma)=-1$. Assume now that $m\geq2$. Clearly, only hook %$H^\lambda_{1(n-r)}$ has length $r$. Since its leg length equals 0, by the 
%The Murnaghan-Nakayama rule, we have %$\chi^\lambda(\sigma)=(-1)^0\chi^{(n-r-1,1)}(\sigma')$,
%where $\sigma'$ has shape $[r^{m-1}]$. By the hyposisys, we infer that 
%$\chi^\lambda(\sigma)=-1$.
%\end{proof}

\begin{lemma}$\cite[Result~3]{Rasala},\cite[Lemma~2.1]{Tong}$\label{l:MinDegree} 
Suppose that $\lambda\vdash{n}$.

$(a)$ If $n\geq15$ then the first six nontrivial minimal character degrees of $S_n$ are:
\begin{enumerate}
	\item $d_1(S_n) = n-1$ and $\lambda\in\{(n-1,1),(2,1^{n-2})\}$; 
	\item $d_2(S_n) = \frac{1}{2}n(n-3)$ and $\lambda\in\{(n-2,2),(2^2,1^{n-4})\}$; 
	\item $d_3(S_n) = d_2(S_n) + 1 = \frac{1}{2}(n-1)(n-2)$ and $\lambda\in\{(n-2,1^2),(3,1^{n-3})\}$; 
	\item $d_4(S_n) = \frac{1}{6}n(n-1)(n-5)$ and $\lambda\in\{(n-3,3),(2^3,1^{n-6})\}$; 
	\item $d_5(S_n) = \frac{1}{6}(n-1)(n-2)(n-3)$ and $\lambda\in\{(n-3,1^3),(4,1^{n-4})\}$; 
	\item $d_6(S_n) = \frac{1}{3}n(n-2)(n-4)$ and $\lambda\in\{(n-3,2,1),(3,2,1^{n-5})\}$;
\end{enumerate}
$(b)$ If $n\geq22$ then the next five smallest character degrees are: 
\begin{enumerate}
\item $d_7(S_n)=n(n-1)(n-2)(n-7)/24$ and $\lambda\in\{(n-4,4),(2^4,1^{n-8})\}$; 
\item $d_8(S_n)=(n-1)(n-2)(n-3)(n-4)/24$ and  $\lambda\in\{(n-4,1^4),(5,1^{n-5)}\}$; 
\item $d_9(S_n)=n(n-1)(n-4)(n-5)/12$ and $\lambda\in\{(n-4,2^2),(3^2,1^{n-6})\}$; 
\item  $d_{10}(S_n)=n(n-1)(n-3)(n-6)/8$ and $\lambda\in\{(n-4,3,1),(3,2^2,1^{n-7})\}$; 
\item $d_{11}(S_n)=n(n-2)(n-3)(n-5)/8$ and $\lambda\in\{(n-4,2,1^2),(4,2,1^{n-6})\}.$
\end{enumerate}
\end{lemma}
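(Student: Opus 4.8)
The plan is to reduce everything to the hook-length formula of Proposition~\ref{l:rep_Sn}(ii) by means of one structural observation about how a long first row of a Young diagram interacts with the rest. Suppose $\lambda\vdash n$ has first part $\lambda_1=n-m$, and let $\mu=(\mu_1,\mu_2,\dots)\vdash m$ be the partition formed by the rows of $\lambda$ below the first one; since we are hunting for small degrees we may assume $n$ is large enough that $\lambda_1\ge\mu_1$. Then the multiset of hook lengths of $T^\lambda$ breaks into three blocks: the hook lengths of $T^\mu$, which occupy rows $2,3,\dots$ of $T^\lambda$ where neither arm nor leg reaches up into the first row and are therefore unchanged; the integers $1,2,\dots,\lambda_1-\mu_1$, which are the hooks $h^\lambda_{1j}$ with $j>\mu_1$; and the $\mu_1$ integers $\lambda_1-\mu_1+1+h^{\mu}_{1j}$ for $1\le j\le\mu_1$, which are the hooks $h^\lambda_{1j}$ with $j\le\mu_1$. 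Since $\prod_{(i,j)\in T^\mu}h^\mu_{ij}=m!/\chi^\mu(1)$ by Proposition~\ref{l:rep_Sn}(ii) applied inside $S_m$, applying the same proposition inside $S_n$ yields
\[
\chi^\lambda(1)=\frac{\chi^\mu(1)}{m!}\cdot\frac{n!/(\lambda_1-\mu_1)!}{\prod_{j=1}^{\mu_1}\bigl(\lambda_1-\mu_1+1+h^{\mu}_{1j}\bigr)}.
\]
Now $n!/(\lambda_1-\mu_1)!$ is the product of the consecutive integers $\lambda_1-\mu_1+1,\dots,n$, and each of the $\mu_1$ denominator factors is one of these integers (they are pairwise distinct, being distinct first-row hooks of $\mu$ shifted by the same amount); cancelling them shows that $\chi^\lambda(1)$ equals $\chi^\mu(1)/m!$ times a product of $m$ distinct integers drawn from $\{\lambda_1-\mu_1+1,\dots,n\}$. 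In particular, for a fixed partition $\mu$, $\chi^{(n-m,\mu)}(1)$ is a polynomial in $n$ of degree exactly $m$ with leading coefficient $\chi^\mu(1)/m!$.

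The second ingredient is the conjugation symmetry $\chi^{\lambda'}(1)=\chi^\lambda(1)$ of Proposition~\ref{l:rep_Sn}(iii): the multiset of character degrees of $S_n$ is invariant under $\lambda\mapsto\lambda'$, so it suffices to treat partitions with $m:=n-\lambda_1\le n-\lambda_1'$ and then adjoin conjugates at the end — this is precisely what produces the two-element sets $\{(n-k,\dots),(\dots,1^{n-k})\}$ in the statement. For such $\lambda$ one needs that every partition other than the (conjugates of the) eleven listed ones satisfies $\chi^\lambda(1)>d_6(S_n)$ whenever $n\ge15$, and $\chi^\lambda(1)>d_{11}(S_n)$ whenever $n\ge22$. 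When $m=n-\lambda_1$ is bounded, this follows by minimizing $\chi^{(n-m,\mu)}(1)$ over partitions $\mu\vdash m$ with $\mu_1\le n-m$ using the formula above: the minimum (for $m\le n/2$) is attained at $\mu=(m)$, with $\chi^{(n-m,m)}(1)=\frac{1}{m!}(n-2m+1)\,n(n-1)\cdots(n-m+2)$, a polynomial of degree $m$ in $n$ which for $m\ge5$ (resp. $m\ge4$) outgrows the degree-$4$ polynomial $d_{11}(S_n)$ (resp. the degree-$3$ polynomial $d_6(S_n)$) once $n$ passes an explicit bound; when $\lambda_1$ is small — equivalently $m$ large — the constraint $\lambda_1\ge\lambda_1'$ forces $\lambda$ to lie close to a square, so $\chi^\lambda(1)$ is at least a Catalan-type value such as $\chi^{(\lceil n/2\rceil,\lfloor n/2\rfloor)}(1)$, which is exponential in $n$. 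Either way one concludes that the eleven smallest degrees are all realized by $\lambda$ with $n-\lambda_1\le4$ (or, via conjugation, $n-\lambda_1'\le4$).

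What remains is a finite computation. There are $1+1+2+3+5=12$ partitions $\mu\vdash m$ with $0\le m\le4$; discarding the trivial one, feed each $(n-m,\mu)$ into the displayed formula to obtain the eleven polynomials $d_1(S_n),\dots,d_{11}(S_n)$ recorded in the statement, adjoin the conjugate partitions (which carry identical degrees), sort the resulting list, and verify that the order is as asserted for every $n\ge15$ in part $(a)$ and that the next five entries are as asserted for every $n\ge22$ in part $(b)$. Each individual comparison amounts to checking that an explicit polynomial difference in $n$ is positive on $[15,\infty)$ or on $[22,\infty)$, which follows from its leading term together with a check on a short initial segment; incidental relations such as $d_3(S_n)=d_2(S_n)+1$ then fall out of the formulas directly.

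The genuine obstacle is exactly this bulk of elementary but numerous case-by-case estimates: upgrading the asymptotic statement about leading coefficients to honest strict inequalities valid throughout the stated ranges, and, above all, pinning down the exact thresholds $15$ and $22$, below which a near-balanced partition such as $(\lceil n/2\rceil,\lfloor n/2\rfloor)$ (whose degree, being a Catalan-type number, is small in absolute terms for small $n$ even though it is the largest among all degrees) does slip into the list. This analysis is carried out in \cite{Rasala} and \cite{Tong}, and in the present paper the result is simply cited from there.
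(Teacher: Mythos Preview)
The paper does not give a proof of this lemma at all: it is stated with citations to \cite{Rasala} and \cite{Tong} and then used as a black box. Your proposal correctly recognizes this in the final paragraph, and the sketch you supply --- the hook-length decomposition for $\lambda=(n-m,\mu)$, the observation that $\chi^{(n-m,\mu)}(1)$ is a degree-$m$ polynomial in $n$ with leading coefficient $\chi^\mu(1)/m!$, the conjugation symmetry reducing to $n-\lambda_1\le4$, and the finite verification of the ordering --- is a faithful outline of the argument in \cite{Rasala}. Nothing further is needed here beyond the citation.
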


For the representations in Lemma~\ref{l:MinDegree}(a) we need explicit values of their characters.

\begin{lemma}\label{l:CharOfSmallDegree} Suppose that $n$ is an integer and take $\sigma\in S_n$.
Assume that $\sigma$ has $i_1$ cycles of length 1, $i_2$ cycles of length 2 and 
$i_3$ cycles of length 3.
Then the following holds.
\begin{enumerate}[(i)]
	\item $\chi^{(n-1,1)}(\sigma)=i_1-1$ if $n\geq 2$;
	\item $\chi^{(n-2,2)}(\sigma)=\frac{1}{2}(i_1-1)(i_1-2)+i_2-1$ if $n\geq 4$;
	\item $\chi^{(n-2,1,1)}(\sigma)=\frac{1}{2}(i_1-1)(i_1-2)-i_2$ if $n\geq 3$;
	\item $\chi^{(n-3,3)}(\sigma)=\frac{1}{6}i_1(i_1-1)(i_1-5)+i_2(i_1-1)+i_3$ if $n\geq 6$;
	\item $\chi^{(n-3,1^3)}(\sigma)=\frac{1}{6}(i_1-1)(i_1-2)(i_1-3)-i_2(i_1-1)+i_3$ if $n\geq 4$;
	\item $\chi^{(n-3,2,1)}(\sigma)=\frac{1}{3}i_1(i_1-2)(i_1-4)-i_3$ if $n\geq 5$.
\end{enumerate}

\end{lemma}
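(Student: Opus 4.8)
The plan is to obtain each of the six characters by realizing it as a constituent of an explicit permutation module (or of an exterior power of the standard module), for which the value at $\sigma$ is combinatorially transparent. Write $\sigma$ for a permutation with $i_1$ fixed points, $i_2$ two-cycles and $i_3$ three-cycles, let $V$ be the natural $n$-dimensional permutation module, and recall $V=[n]\oplus[(n-1,1)]$. The one observation used throughout is that a subset, or an ordered tuple of pairwise disjoint subsets, is fixed by $\sigma$ exactly when each part is a union of cycles of $\sigma$; since $\sigma$ has cycles only of lengths $1,2,3$, counting such fixed configurations amounts to reading off a coefficient of a product of the shape $(1+t)^{i_1}(1+t^{2})^{i_2}(1+t^{3})^{i_3}$ or a close variant.

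First I would handle the two-row partitions $(n-1,1)$, $(n-2,2)$, $(n-3,3)$. By Young's rule the permutation module $M^{(n-k,k)}$ on the $k$-subsets of $\{1,\dots,n\}$ equals $\bigoplus_{i=0}^{k}[(n-i,i)]$ whenever $2k\le n$ --- precisely the hypotheses $n\ge 2,4,6$ of (i),(ii),(iv) --- and its character at $\sigma$ is the number of $\sigma$-invariant $k$-subsets, i.e. $[t^{k}]\,(1+t)^{i_1}(1+t^{2})^{i_2}(1+t^{3})^{i_3}$. Taking $k=1,2,3$ and successively subtracting the lower constituents $[n],[(n-1,1)],[(n-2,2)]$ gives (i), (ii), (iv) after simplification of binomial coefficients. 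For the hook partitions $(n-2,1^{2})$ and $(n-3,1^{3})$ I would instead use the classical identification $\wedge^{k}[(n-1,1)]\cong[(n-k,1^{k})]$, valid for $1\le k\le n-1$ and so covering the ranges $n\ge 3,4$ of (iii),(v); since the eigenvalues of $\sigma$ acting on $V$ are the $\ell$-th roots of unity, once per $\ell$-cycle, one has $\sum_{k}\operatorname{tr}\!\big(\wedge^{k}\sigma\mid V\big)t^{k}=\prod_{\text{cycles}}\big(1-(-t)^{\ell}\big)=(1+t)^{i_1}(1-t^{2})^{i_2}(1+t^{3})^{i_3}$, whence $\sum_{k}\chi^{(n-k,1^{k})}(\sigma)t^{k}=(1+t)^{i_1-1}(1-t^{2})^{i_2}(1+t^{3})^{i_3}$ after dividing off the trivial eigenvalue; the coefficients of $t^{2}$ and $t^{3}$ are exactly (iii) and (v). Both families can alternatively be derived by iterating Proposition~\ref{NM-formula}, since a hook shape admits rim hooks only at the right end of its first row and the bottom of its first column.

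The remaining case (vi), the partition $(n-3,2,1)$, is the one I expect to cause the only real difficulty, since it is neither a subset module nor an exterior power. Here I would apply Young's rule to $M^{(n-3,2,1)}=\operatorname{Ind}_{S_{n-3}\times S_{2}\times S_{1}}^{S_{n}}\mathbf{1}$: a direct count of semistandard tableaux of content $(n-3,2,1)$ yields the Kostka numbers and hence, for $n\ge 6$, $M^{(n-3,2,1)}=[n]+2[(n-1,1)]+2[(n-2,2)]+[(n-2,1^{2})]+[(n-3,3)]+[(n-3,2,1)]$. The character of this module at $\sigma$ counts ordered triples $(B_1,B_2,B_3)$ of disjoint blocks of sizes $n-3,2,1$, each a union of $\sigma$-cycles: choose the fixed point $B_3$, then either a two-cycle or two further fixed points for $B_2$, giving $i_1\big(i_2+\binom{i_1-1}{2}\big)$. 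Subtracting the six contributions already computed and simplifying the resulting cubic in $i_1$ collapses it to $\tfrac13 i_1(i_1-2)(i_1-4)-i_3$, which is (vi). For the boundary value $n=5$ the symbol $(n-3,3)=(2,3)$ is not a partition, so that summand simply drops out of the decomposition of $M^{(2,2,1)}$ and the same computation --- or a direct check on the few conjugacy classes of $S_5$ --- gives the identical formula. Apart from getting these Kostka multiplicities right, keeping track of the $n=5$ degeneracy, and not making an arithmetic slip in the final polynomial simplification, the argument is routine.
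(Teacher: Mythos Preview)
Your argument is correct and complete. It is, however, a genuinely different route from the one taken in the paper.

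The paper cites the first three formulas from \cite{FulHar} and then establishes (iv)--(vi) by direct computation with the Frobenius character formula (Proposition~\ref{p:frob_formula}), in each case tracking which monomials of $\Delta(x)\prod_j P_j(x)^{i_j}$ can contribute to the target coefficient; for $(n-3,1^3)$ it switches to an induction via the Murnaghan--Nakayama rule (Proposition~\ref{NM-formula}) because the Frobenius bookkeeping becomes unwieldy. Your approach instead realises each character inside a module whose trace at $\sigma$ is a transparent fixed-point count: the permutation modules $M^{(n-k,k)}$ via Young's rule for the two-row shapes, the exterior powers $\wedge^k[(n-1,1)]$ for the hooks, and $M^{(n-3,2,1)}$ with its Kostka decomposition for the last case. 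The advantage of your method is that it is more structural and essentially free of case analysis once the relevant decomposition is written down; the generating-function identities do the arithmetic for you, and the same template would handle $(n-4,4)$, $(n-4,1^4)$, etc.\ with no new ideas. The paper's approach has the virtue of being self-contained within the Frobenius/Murnaghan--Nakayama machinery already introduced, without needing Young's rule or the exterior-power identification, but it pays for this with more explicit monomial chasing. Two minor points worth tightening in a final write-up: make explicit that cycles of length $\ge 4$ contribute factors $1\pm t^{\ell}$ that do not affect coefficients in degree $\le 3$, and record the Kostka count for $M^{(n-3,2,1)}$ carefully (your list of multiplicities is correct, and the $n=5$ degeneration is handled exactly as you say).
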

\begin{proof} For claims $(i)-(iii)$, see \cite[Exercise 4.15]{FulHar}.

Now we consider the remaining cases: $$\lambda\in\{(n-3,3),(n-3,1^3),(n-3,2,1)\}.$$
We often use further that the coefficient of $x_1^{i_1}x_2^{i_2}\ldots x_k^{i_k}$ in the expansion of $(x_1+x_2\ldots+x_k)^m$
equals the multinomial coefficient ${m\choose i_1,i_2,\ldots,i_k}=\frac{m!}{i_1!\cdot i_2!\ldots\cdot i_k!}$.

{\bf Case $\lambda=(n-3,3)$}. Proposition~\ref{p:frob_formula} yields 
$$\chi^\lambda(\sigma)=[(x_1-x_2)(x_1+x_2)^{i_1}(x_1^2+x_2^2)^{i_2}(x_1^3+x_2^3)^{i_3}x_1^{n-i_1-2i_2-3i_3}]_{(n-2,3)},$$ i.e. the coefficient of $x_1^{n-2}x_2^3$ in this product.
Four factors involve $x_2$, namely $(x_1-x_2)$, $(x_1+x_2)^{i_1}$, $(x_1^2+x_2^2)^{i_2}$, and $(x_1^3+x_2^3)^{i_3}$. The sum of powers of $x_2$ must be equal to 3.
It is easy to see that there are five possibilities: $3=0+0+0+3$,
$3=0+1+2+0$, $3=1+0+2+0$, $3=0+3+0+0$, $3=1+2+0+0$. The corresponding factors containing $x_2$ are 
$i_3\cdot x_1^{3i_3-3}x_2^3$,
$i_1i_2\cdot x_1^{i_1-1}x_2x_1^{2i_2-2}x_2^2$, $-i_2\cdot x_2x_1^{2i_2-2}x_2^2$, ${i_1\choose 3}\cdot x_1^{3i_1-3}x_2^3$,
$-{i_1\choose 2}\cdot x_2x_1^{i_1-2}x_2^2$. Therefore, the coefficient of $x_1^{n-2}x_2^3$ equals 
$i_3+i_1i_2-i_2+\frac{i_1(i_1-1)(i_1-2)}{6}-\frac{i_1(i_1-1)}{2}=i_3+i_1i_2-i_2+\frac{i_1(i_1-1)(i_1-2)-3i_1(i_1-1)}{6}=
\frac{i_1(i_1-1)(i_1-5)}{6}+i_3+i_2(i_1-1)$, as claimed.

{\bf Case $\lambda=(n-3,1^3)$}. Proposition~\ref{p:frob_formula} yields
$$\chi^\lambda(\sigma)=[\Delta(x)\cdot(x_1+x_2+x_3+x_4)^{i_1}(x_1^2+x_2^2+x_3^2+x_4^2)^{i_2}(x_1^3+x_2^3+x_3^3+x_4^3)^{i_3}]_{(n,3,2,1)},$$
and we need to find the coefficient of $x_1^nx_2^3x_3^2x_4$ in this product.
In particular, if $i_1=i_2=i_3=0$ then $\chi^\lambda(\sigma)=0$ and this is consistent with the claimed formula.
So we can assume that at least one of the numbers $i_1$, $i_2$, and $i_3$ is nonzero.

Furthermore, we continue our calculations using the Murnaghan--Nakayama formula rather than the Frobenius formula because the latter in this case requires the consideration of a large number of cases.

Rim hooks are removed in the same way for $n\geq 7$, so first we suppose that $4\leq n\leq 6$.
If $n=4$ then $\lambda=(1^4)$ corresponds to the alternating representation. If $n=5$ then $\lambda'=(4,1)$
corresponds to the standard representation. If $n=6$ then $\lambda'=(4,1^2)$ is the representaion of claim~$(iii)$ of the lemma. In particular, we know the formulas for characters in these cases.
This is a routine check that in all cases their values coincide with $\frac{1}{6}(i_1-1)(i_1-2)(i_1-3)-i_2(i_1-1)+i_3$.

Assume now that $n\geq7$. We consider three cases: for $i_1\neq0$, $i_2\neq0$, and $i_3\neq0$. If $i_1\neq0$ then we can assume that $\sigma$ fixes $n$ and denote by $\sigma_1$ the permutation
in $S_{n-1}$ acting on $\{1,2,\ldots, n-1\}$ as $\sigma$. Observe that only the hooks $H^\lambda_{1\,n-3}$ and $H^\lambda_{4\,1}$ of $T^\lambda$ are of length one, so Proposition~\ref{NM-formula} implies that $\chi^\lambda(\sigma)=\chi^{(n-4,1^3)}(\sigma_1)+\chi^{(n-3,1^2)}(\sigma_1)$.
By induction, we can assume that $\chi^{(n-2,1^3)}(\sigma_1)=\frac{1}{6}(i_1-2)(i_1-3)(i_1-4)-i_2(i_1-2)+i_3$,
and by $(ii)$ we know that  $\chi^{(n-3,1^2)}(\sigma_1)=\frac{1}{2}(i_1-2)(i_1-3)-i_2$.
Therefore, 
\begin{multline*}
\chi^\lambda(\sigma)=\frac{1}{6}(i_1-2)(i_1-3)(i_1-4)-i_2(i_1-2)+i_3+\frac{1}{2}(i_1-2)(i_1-3)-i_2=\\=
\frac{1}{6}(i_1-2)(i_1-3)(i_1-4+3)-i_2(i_1-1)+i_3=\frac{1}{6}(i_1-1)(i_1-2)(i_1-3)-i_2(i_1-1)+i_3,
\end{multline*}
as required.

Suppose that $i_2\neq0$ and $\sigma$ switches the symbols $n-1$ and $n$. Denote by $\sigma_1$ the permutation
in $S_{n-2}$ acting on $\{1,2,\ldots, n-2\}$ as $\sigma$. Observe that only the hooks $H^\lambda_{1\,n-4}$ and $H_{3\,1}$ of $T^\lambda$ are of length two, so Proposition~\ref{NM-formula} implies that
$\chi^\lambda(\sigma)=\chi^{(n-5,1^3)}(\sigma_1)-\chi^{(n-3,1)}(\sigma_1)$. By induction and $(i)$,
we find that
\begin{multline*}
\chi^\lambda(\sigma)=\frac{1}{6}(i_1-1)(i_1-2)(i_1-3)-(i_2-1)(i_1-1)+i_3-i_1+1=\\=
\frac{1}{6}(i_1-1)(i_1-2)(i_1-3)-i_2(i_1-1)+i_3,
\end{multline*}
as required.

Finally, suppose that $i_3\neq0$ and $\sigma$ includes the cycle $(n-2,n-1,n)$ in its cycle decomposition.
Denote by $\sigma_1$ the permutation
in $S_{n-3}$ acting on $\{1,2,\ldots, n-3\}$ as $\sigma$. Observe that only the hooks $H^\lambda_{1\,n-5}$ and $H^\lambda_{2\,1}$ of $T^\lambda$ are of length three, so Proposition~\ref{NM-formula} implies that
$\chi^\lambda(\sigma)=\chi^{(n-6,1^3)}(\sigma_1)+\chi^{(n-3)}(\sigma_1)$. 
Clearly, $\chi^{(n-3)}(\sigma_1)=1$. By induction,
we find that
\begin{multline*}
\chi^\lambda(\sigma)=\frac{1}{6}(i_1-1)(i_1-2)(i_1-3)-i_2(i_1-1)+i_3-1+1=\\=
\frac{1}{6}(i_1-1)(i_1-2)(i_1-3)-i_2(i_1-1)+i_3,
\end{multline*}
as required.

{\bf Case $\lambda=(n-3,2,1)$}. Proposition~\ref{p:frob_formula} yields
$$\chi^\lambda(\sigma)=[\Delta(x)(x_1+x_2+x_3)^{i_1}(x_1^2+x_2^2+x_3^2)^{i_2}(x_1^3+x_2^3+x_3^3)^{i_3}]_{(n-1,3,1)},$$
and we need to find the coefficient of $x_1^{n-1}x_2^3x_3$ in this product.
First we note that $$\Delta(x)=(x_1-x_2)(x_1-x_3)(x_2-x_3)=x_1^2x_2-x_1x_2^2+x_2^2x_3-x_2x_3^2+x_1x_3^2-x_1^2x_3.$$
Since $x_3^2$ does not divide $x_1^{n-1}x_2^3x_3$, we can use only 
$x_1^2x_2-x_1x_2^2+x_2^2x_3-x_1^2x_3$ to obtain the term $x_1^{n-1}x_2^3x_3$ in the product.

Assume that we avoid  $x_1^{3i_3}$ in the factor $(x_1^3+x_2^3+x_3^3)^{i_3}$.
Then we can take only the term $i_3x_1^{3(i_3-1)}x_2^3$. Note that $\Delta(x)$ contains the factor $(x_2-x_3)$ and since we chose $x_2^3$, we use $-x_3$ in this factor. Now from all the other factors in the product we can use only the powers of $x_1$; hence, in this case we get $-i_3x_1^{n}x_2^3x_3$.
So we can assume now that we use $x_1^{3i_3}$ in $(x_1^3+x_2^3+x_3^3)^{i_3}$.

Assume that we avoid $x_1^{2i_2}$ in the factor $(x_1^2+x_2^2+x_3^2)^{i_2}$.
Then we can use only the term $i_2x_1^{2(i_2-1)}x_2^2$. Now we need to find the factors where we take $x_2$ and $x_3$ to obtain the product $x_2x_3$. Observe that in $\Delta(x)$ we can use either $x_1^2x_2$ or $-x_1^2x_3$.
In the first case we must take $i_1x_1^{i_1-1}x_3$ in $(x_1+x_2+x_3)^{i_1}$ and in the second 
case $i_1x_1^{i_1-1}x_2$. Therefore, in the product for $x_1^{2i_2}$ we get $i_2(i_1-i_1)\cdot(-i_3x_1^{n}x_2^3x_3)=0$.
So we can assume now that we use $x_1^{2i_2}$ in $(x_1^2+x_2^2+x_3^2)^{i_2}$.

It remains to find the coefficient of $x_1^{n-2i_2-3i_3}x_2^3x_3$ in the product $(x_1^2x_2-x_1x_2^2+x_2^2x_3-x_1^2x_3)(x_1+x_2+x_3)^{i_1}$.
Once we fix a term in the first factor, the term in $(x_1+x_2+x_3)^{i_1}$ is determined uniquely.
Namely, for $x_1^2x_2$ we take  $\frac{i_1(i_1-1)(i-2)}{2}x_1^{i_1-3}x_2^2x_3$, for 
$-x_1x_2^2$ we take  $i_1(i_1-1)x_1^{i_1-2}x_2x_3$, for $x_2^2x_3$ we take $i_1x_1^{i_1-1}x_2$,
and for $-x_1^2x_3$ we take $\frac{i_1(i_1-1)(i_1-2)}{6}x_1^{i_1-3}x_2^3$.
Thus, in this case we obtain
\begin{multline*}
\Big(\frac{i_1(i_1-1)(i_1-2)}{3}-i_1(i_1-1)+i_1\Big)x_1^{n-1}x_2^2x_3=\\=\frac{i_1}{3}(i_1^2-3i_1+2-3i_1+3+3)x_1^{n-1}x_2^2x_3=\frac{i_1(i_1-2)(i_1-4)}{3}x_1^{n-1}x_2^3x_3.
\end{multline*}

So the final coefficient of $x_1^{n-1}x_2^3x_3$ is $\frac{i_1(i_1-2)(i_1-4)}{3}-i_3$, as claimed.

\end{proof}

The following result helps us to find the dimensions of eigenspaces of the elements using the character values.
\begin{lemma}\label{l:calc_centraliser} Consider a finite group $G$ and an ordinary 
representation of $\rho:G\rightarrow GL(V)$. Take $g\in G$ with $|g|=n$ and $\eta\in\mathbb{C}$ with $\eta^n=1$.
If $\chi$ is the character of $\rho$ then
$$n\cdot\dim (V^\eta(\rho(g)))=\sum_{i=0}^{i<n}\chi(g^i)\overline{\eta}^i,$$
where $V^\eta(\rho(g))=\operatorname{ker}(\rho(g)-\eta\cdot\operatorname{Id_V})$, i.e. the eigenspace
of $\rho(g)$ associated to $\eta$.
\end{lemma}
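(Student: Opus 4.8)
The plan is to pass to the restriction of $\rho$ to the cyclic group $\langle g\rangle$. Since $g$ has order $n$ and we are in the ordinary (characteristic zero) setting, $\rho(g)$ is annihilated by $x^n-1$, a polynomial with $n$ distinct roots; hence $\rho(g)$ is diagonalizable and all its eigenvalues lie in the group $\mu_n$ of $n$-th roots of unity. Thus $V=\bigoplus_{\zeta\in\mu_n}V^\zeta(\rho(g))$, and taking traces of $\rho(g)^i$ on this decomposition gives $\chi(g^i)=\sum_{\zeta\in\mu_n}\dim V^\zeta(\rho(g))\cdot\zeta^i$ for every integer $i$.

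Next I would substitute this expansion into the right-hand side of the asserted identity and interchange the two summations:
$$\sum_{i=0}^{n-1}\chi(g^i)\overline{\eta}^i=\sum_{\zeta\in\mu_n}\dim V^\zeta(\rho(g))\sum_{i=0}^{n-1}(\zeta\overline{\eta})^i.$$
Because $\eta,\zeta\in\mu_n$ we have $\overline{\eta}=\eta^{-1}$ and $(\zeta\overline{\eta})^n=1$, so the inner geometric sum equals $n$ when $\zeta=\eta$ and vanishes otherwise. Hence the whole expression collapses to $n\cdot\dim V^\eta(\rho(g))$, which is exactly the claim.

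Equivalently, and perhaps more conceptually, one can introduce the operator $P=\tfrac1n\sum_{i=0}^{n-1}\overline{\eta}^i\,\rho(g)^i\in\operatorname{End}(V)$; using $\rho(g)^n=\operatorname{Id}_V$ a short computation shows $P^2=P$ with image $V^\eta(\rho(g))$, so $P$ is the projection onto the $\eta$-eigenspace along the sum of the other eigenspaces, and $\dim V^\eta(\rho(g))=\operatorname{tr}P=\tfrac1n\sum_{i=0}^{n-1}\overline{\eta}^i\chi(g^i)$.

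I do not expect any genuine obstacle: the two points to be careful about are that diagonalizability of $\rho(g)$ really uses that $n$ is invertible in the field (automatic over $\mathbb{C}$), and that $\overline{\eta}=\eta^{-1}$ for roots of unity, which is what makes the twisted sum single out the eigenvalue $\eta$ itself rather than its inverse. The rest is the standard column-orthogonality relation for the characters of $\mathbb{Z}/n\mathbb{Z}$.
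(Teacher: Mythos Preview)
Your proof is correct and is essentially the same as the paper's: both restrict to the cyclic subgroup $\langle g\rangle$ and invoke orthogonality of its characters. The paper packages this as the inner product $\langle \chi|_{\langle g\rangle},\psi\rangle$ with the one-dimensional character $\psi(g)=\eta$, which computes the multiplicity of $\psi$ in $\chi|_{\langle g\rangle}$; your diagonalization-plus-geometric-sum argument (and your projection-operator variant) unwinds exactly the same computation.
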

\textsf{}\begin{proof} Denote by $\rho'$ the restriction of $\rho$ to $H=\langle g \rangle$ and by
$\chi'$ the character of $\rho'$. Consider an irreducible representation of $H$ which maps $g$ to $\eta g$ and denote by $\psi$ its character.
Then the inner product $$\langle \chi', \psi\rangle=\frac{1}{|H|}\sum_{i=0}^{i<n}\chi'(g^i)\overline{\psi(g^i)}=\frac{1}{n}\sum_{i=0}^{i<n}\chi(g^i)\overline{\eta}^i$$
equals the number of irreducible constituents of $\chi'$ with character $\psi$. Clearly, the direct sum of these constituents is the eigenspace of $\rho'(g)$ associated to $\eta$.
Therefore, $\sum_{i=0}^{i<n}\chi(g^i)\overline{\eta}^i=n\cdot\dim(V^\eta(\rho(g)))$, as claimed.
\end{proof}

\begin{lemma}\label{l:inequality}
Suppose that $n\geq23$. Then the following inequalities hold.
\begin{enumerate}[(i)]
	\item $\sqrt{n}(n-2)(n-7)>54\cdot(n-1)$;
	\item $\sqrt{n}(n-2)^2(n-7)^2>8\cdot(24)^2\cdot(n-1)$.
\end{enumerate}
\end{lemma}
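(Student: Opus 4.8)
The plan is to remove the square root by the crude estimate $\sqrt{n}\geq\sqrt{23}>4$, which is valid for all $n\geq 23$, and then to reduce each of the two claims to an elementary polynomial inequality settled by a short monotonicity argument together with a single evaluation at $n=23$.

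First I would treat $(i)$. Since $\sqrt{n}>4$, it is enough to show $4(n-2)(n-7)>54(n-1)$, which rearranges to $2n^2-45n+55>0$. The left-hand side is an upward parabola with vertex at $n=45/4<23$, hence it is increasing on $[23,\infty)$; since its value at $n=23$ equals $2\cdot 529-45\cdot 23+55=78>0$, it stays positive for every $n\geq 23$, which gives $(i)$.

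For $(ii)$ I would argue the same way. Using $\sqrt{n}>4$ and $8\cdot 24^2=4608=4\cdot 1152$, the claim reduces to $(n-2)^2(n-7)^2>1152(n-1)$. For $n\geq 23$ we have $n-2\geq 21$ and $n-7\geq 16$, so
$$(n-2)^2(n-7)^2\ \geq\ 21\cdot 16\cdot(n-2)(n-7)\ \geq\ 21\cdot 16^2\cdot(n-2)\ =\ 5376\,(n-2),$$
and $5376(n-2)>1152(n-1)$ is equivalent to $4224(n-1)>5376$, which holds since $n-1\geq 22$. (If a uniform treatment is preferred, one can instead note that the quartic $(n-2)^2(n-7)^2-1152(n-1)$ has derivative $2(n-2)(n-7)(2n-9)-1152$, which is clearly positive for $n\geq 23$, so monotonicity plus positivity at $n=23$ again finishes the argument.)

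I do not expect any genuine obstacle here; the only point needing a little care is to choose a lower bound for $\sqrt{n}$ that is weak enough to keep the reduced polynomial inequalities clean but strong enough that they remain valid all the way down to $n=23$. The bound $\sqrt{n}>4$ works in both parts, and the remaining computations are a routine finite check.
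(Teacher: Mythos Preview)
Your proof is correct and shares the paper's key idea: replace $\sqrt{n}$ by the crude bound $\sqrt{n}>4$ (valid for $n\geq23$) and reduce to an elementary polynomial inequality checked at $n=23$. The only noteworthy difference is in part~(ii): the paper applies (i) to get $\sqrt{n}(n-2)^2(n-7)^2>54(n-1)(n-2)(n-7)$ and then needs only $(n-2)(n-7)\geq 8\cdot 24^2/54$, whereas you treat (ii) independently of (i) via the bound $\sqrt{n}>4$ again; both routes are equally short and routine.
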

\begin{proof}
	To prove $(i)$, we show that $\sqrt{n}(n-2)(n-7)>54n$. This inequality is equivalent to 
	$\frac{n-2}{\sqrt{n}}(n-7)>54$. Since $\sqrt{n}>4$,
	we have $\frac{n-2}{\sqrt{n}}=\sqrt{n}-\cfrac{2}{\sqrt{n}}>4-1/2=7/2$.
	Then $\cfrac{n-2}{\sqrt{n}}(n-7)>\frac{7}{2}16=56$. So 
	$\sqrt{n}(n-2)(n-7)>54n>54(n-1)$.
	
	Now we prove $(ii)$. By $(i)$, we have $\sqrt{n}(n-2)^2(n-7)^2\geq54(n-2)(n-7)(n-1)$.
	So it suffices to prove that $(n-2)(n-7)\geq\frac{8(24)^2}{54}$.
	This inequality is true for $n=23$ and hence it is true for all $n\geq23$.

\end{proof}

\section{Proof of Theorem~\ref{th:1}}

In this section we prove Theorem~\ref{th:1}. Throughout,
we suppose that $n\geq3$ and $\sigma$ is a permutation of shape $[r^m1^{n-rm}]$
in $S_n$, where $r\geq2$. We fix a~partition $\lambda=(\lambda_1,\lambda_2,\ldots,\lambda_k )$ of $n$
and an irreducible $S_n$-module $V$ corresponding to $\lambda$.
 The proof is split into several lemmas.

\begin{lemma}\label{l:special_reps}
If $\lambda\in\{(1^n),(n-1,1),(2,1^{n-2})\}$ then Theorem~\ref{th:1} holds.
\end{lemma}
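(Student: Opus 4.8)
The plan is to dispatch each of the three named representations directly, using the explicit descriptions given in the introduction together with the small results already at hand.

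First I would handle $\lambda=(1^n)$, the alternating representation. Here $\rho(\sigma)$ is the scalar $\operatorname{sgn}(\sigma)=(-1)^{\operatorname{sgn}(\sigma)}$ (abusing notation: the sign is $+1$ or $-1$ depending on parity), so the minimal polynomial is the degree-one polynomial $x-(-1)^{\operatorname{sgn}(\sigma)}$. I then compare this with $x^r-1$: they coincide precisely when $r=1$, which is excluded by hypothesis $r\geq2$. Hence case (vi) of Theorem~\ref{th:1} always occurs for this $\lambda$, and nothing else is claimed, so the theorem holds here. (One should note $\operatorname{sgn}(\sigma)$ here is used as the parity in $\{0,1\}$, consistent with the statement of the theorem.)

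Next I would treat $\lambda=(n-1,1)$, the standard representation acting on $l^\perp\subseteq V$ with basis $e_1-e_i$. The eigenvalues of $\rho(\sigma)$ on the full permutation module $V$ are read off from the cycle structure: each $r$-cycle contributes all $r$-th roots of unity once, and each fixed point contributes a $1$. Restricting to $l^\perp$ removes exactly one eigenvalue $1$ (the line $l$). So: if $m\geq2$, or if $m=1$ and $n>rm$ (i.e. there is at least one fixed point), then $1$ is still an eigenvalue of $\rho(\sigma)$ and the full set of $r$-th roots of unity appears, giving $\mu_{\rho(\sigma)}(x)=x^r-1$. The only remaining possibility is $m=1$, $n=rm$, i.e. $r=n$ and $\sigma$ is an $n$-cycle; then the eigenvalues on $l^\perp$ are exactly the primitive-and-nonprimitive $n$-th roots of unity other than $1$, each with multiplicity one, so $\mu_{\rho(\sigma)}(x)=(x^n-1)/(x-1)$, which is case (i). This can be cross-checked against Lemma~\ref{l:calc_centraliser}: $\dim V^1(\rho(\sigma))=\frac1n\sum_{i=0}^{n-1}\chi^{(n-1,1)}(\sigma^i)$, and using Lemma~\ref{l:CharOfSmallDegree}(i) one computes this to be $0$ exactly when $\sigma$ is an $n$-cycle.

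Finally, $\lambda=(2,1^{n-2})$ is the associated representation: by Proposition~\ref{l:rep_Sn}(iii), $\chi^{(2,1^{n-2})}(\tau)=\operatorname{sgn}(\tau)\chi^{(n-1,1)}(\tau)$ for all $\tau$, so $\rho(\sigma)$ is $\operatorname{sgn}(\sigma^i)$ times the corresponding matrix in the standard representation evaluated at $\sigma^i$; concretely, $\rho$ here is the tensor product of the standard representation with the sign representation. Thus the eigenvalues of $\rho(\sigma)$ are those of the standard representation at $\sigma$ multiplied by $\operatorname{sgn}(\sigma)$. When $\operatorname{sgn}(\sigma)=1$ the analysis is identical to the standard case. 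When $\operatorname{sgn}(\sigma)=-1$ the eigenvalue multiset gets multiplied by $-1$; I would again split on whether a fixed point or a second cycle is present, concluding $\mu_{\rho(\sigma)}(x)=x^r-1$ in all cases except $r=n$, $m=1$ with $n$ even (so that $\sigma$ is an odd $n$-cycle), where the eigenvalues are $\{-\zeta: \zeta^n=1,\ \zeta\neq 1\}=\{\omega:\omega^n=1,\ \omega\neq -1\}$, giving $\mu_{\rho(\sigma)}(x)=(x^n-1)/(x+1)=(x^n-1)/(x-(-1))$; combined with the odd case $(x^n-1)/(x-1)$ this is exactly $(x^n-1)/(x+(-1)^n)$, case (ii). Again Lemma~\ref{l:calc_centraliser} provides a verification via $\sum_i \operatorname{sgn}(\sigma^i)\chi^{(n-1,1)}(\sigma^i)\overline{\eta}^i$.

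The only mild subtlety — the ``main obstacle'', though it is not deep — is the bookkeeping of which $r$-th roots of unity survive as eigenvalues after deleting the one-dimensional summand and, in the associated case, after the sign twist; in particular one must be careful that when $n>rm$ there is genuinely a fixed point contributing the eigenvalue $1$ (for $\lambda=(n-1,1)$) and that the multiset of eigenvalues from a single $r$-cycle is the complete set of $r$-th roots of unity, each once. Once this is set up cleanly, matching against $x^r-1$ and identifying cases (i), (ii), (vi) is immediate, and no other case from the list of Theorem~\ref{th:1} arises for these three partitions.
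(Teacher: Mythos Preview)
Your proof is correct and complete; all three cases are handled, with conclusions matching cases (i), (ii), (vi) of Theorem~\ref{th:1}. The route, however, differs from the paper's. The paper computes each eigenspace dimension via the character formula of Lemma~\ref{l:calc_centraliser}, summing $\chi^\lambda(\sigma^i)\overline{\eta}^i$ with the explicit values of $\chi^{(n-1,1)}$ supplied by Lemma~\ref{l:CharOfSmallDegree}(i), and then repeats the calculation for $(2,1^{n-2})$ with the sign-twisted character. You instead work directly with the permutation module $V=\langle e_1,\dots,e_n\rangle$: read off the eigenvalue multiset cycle-by-cycle, pass to $l^\perp$ by deleting one copy of the eigenvalue $1$, and for the associated representation use that $[(2,1^{n-2})]\cong[(n-1,1)]\otimes[\operatorname{sgn}]$ to multiply the whole eigenvalue set by $\operatorname{sgn}(\sigma)$ (noting that $-1$ is itself an $r$-th root of unity when $r$ is even, so the set of $r$-th roots is preserved). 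Your argument is more geometric and avoids any character arithmetic; the paper's argument is uniform with the rest of the proof of Theorem~\ref{th:1}, where character sums and Lemma~\ref{l:calc_centraliser} are the only available tool. The cross-check you mention via Lemma~\ref{l:calc_centraliser} is precisely what the paper carries out in full.
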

\begin{proof}	
Assume first that $\lambda$ gives the alternating representation, i.e. $\lambda=(1^n)$ and $V$ is one-dimensional.
Then we have $\chi^{\lambda}(\sigma)=\operatorname{sgn}(\sigma)$. If $\sigma$ is even then
$|\sigma|\cdot\dim C_V(\rho(\sigma))= \sum_{i=0}^{i<|\sigma|}\chi^\lambda(\sigma^i)=\sum_{i=0}^{i<|\sigma|}1=|\sigma|$.
Hence, $\sigma$ acts trivially on $V$ and the minimal polynomial equals $x-1$. If $\sigma$ is odd then $|\sigma|=r$ is even. Therefore, 
$|\sigma|\cdot\dim{V^{-1}}(\rho(\sigma))=\sum_{i=0}^{i<|\sigma|}\chi^\lambda(\sigma^i)(-1)^i=|\sigma|$. 
In this case $V=V^{-1}(\rho(\sigma))$ and the minimal polynomial equals $x+1$. 

Now take $\lambda=(n-1,1)$, i.e. $\lambda$ corresponds to the standard representation. 
Denote by $i_1=n-rm$ the number of length one cycles in the cycle decomposition of~$\sigma$. Lemma~\ref{l:CharOfSmallDegree}
implies that $\chi^\lambda(\sigma^0)=n-1$ and $\chi^\lambda(\sigma^j)=i_1-1$
for $1\leq j\leq r-1$.
If $\eta$ is a root of the polynomial $x^r-1$ then $r\cdot\dim V^{\eta}(\rho(\sigma))=n-1+(i_1-1)(\eta+\eta^2+\ldots+\eta^{r-1})=
n-i_1+(i_1-1)(1+\eta+\ldots+\eta^{r-1})$.
Therefore, $$r\cdot\dim V^{\eta}(\rho(\sigma))=\begin{cases} 
n-i_1+(i_1-1)r & \mbox{ for }\eta=1,\\
n-i_1 &  \mbox{ for }\eta\neq1.
 \end{cases}$$

Thus, $r\cdot\dim V^{\eta}(\rho(\sigma))=0$ if and only if $i_1=0$, $\eta=1$, and $r=n$. Consequently, if $\sigma$ is not of shape $[n^1]$ then the minimal polynomial
equals $x^r-1$, and for $\sigma=[n^1]$ the minimal polynomial equals $\frac{x^n-1}{x-1}$.

Finally, take $\lambda=(2,1^{n-2})$. 
If $\sigma$ is even then $\chi^{\lambda}(\sigma)=\chi^{(n-1,1)}(\sigma)$
and this case is similar to the previous one. 
If $\sigma$ is odd then $r$ is even. If $\eta$ is a root of the polynomial $x^r-1$ then 
$$r\cdot\dim V^\eta(\rho(\sigma))=n-1+(i_1-1)(-\eta+\eta^2-\ldots-\eta^{r-1})=
n-i_1+(i_1-1)(1-\eta+\ldots-\eta^{r-1}).$$ 
Therefore,
$$r\cdot\dim V^{\eta}(\rho(\sigma))=\begin{cases} 
n-i_1+(i_1-1)r & \mbox{ for }\eta=-1,\\
n-i_1 &  \mbox{ for }\eta\neq-1.
\end{cases}$$
Thus, $r\cdot\dim V^{\eta}(\rho(\sigma))=0$ if and only if $i_1=0$, $\eta=-1$,
and $r=n$. 
\end{proof}

\begin{lemma}\label{l:n_less_23} Theorem~\ref{th:1} holds if $n\leq 22$ and $\sigma$ is of shape $[r^m]$, where $rm=n$.
\end{lemma}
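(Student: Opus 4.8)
The plan is to reduce the statement to a finite check and then carry that check out using the tools already assembled. Since $\sigma$ is of shape $[r^m]$ with $rm=n$ and $n\le 22$, there are only finitely many triples $(n,r,m)$ to consider: for each $n\in\{3,\dots,22\}$ we run through the divisors $r\mid n$ with $r\ge 2$, setting $m=n/r$. For a fixed such triple, $|\sigma|=r$, so the minimal polynomial $\mu_{\rho(\sigma)}(x)$ divides $x^r-1$ and is squarefree; hence it is determined by which $r$th roots of unity $\eta$ are eigenvalues of $\rho(\sigma)$, i.e.\ by the set $\{\eta : \dim V^\eta(\rho(\sigma))>0\}$. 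By Lemma~\ref{l:calc_centraliser}, $r\cdot\dim V^\eta(\rho(\sigma))=\sum_{i=0}^{r-1}\chi^\lambda(\sigma^i)\overline\eta^{\,i}$, where $\sigma^i$ has shape $[(r/\gcd(i,r))^{m\gcd(i,r)}]$. Thus $\mu_{\rho(\sigma)}(x)=x^r-1$ precisely when every $r$th root of unity occurs as an eigenvalue, and our task is to pin down the finitely many $(\lambda,r,m)$ for which some $\eta$ is missing. The cases $\lambda\in\{(1^n),(n-1,1),(2,1^{n-2})\}$ are already settled by Lemma~\ref{l:special_reps}, so we may assume $\lambda$ is none of these; in particular $\chi^\lambda(1)\ge d_2(S_n)=\tfrac12 n(n-3)$ when $n$ is large enough for Lemma~\ref{l:MinDegree} to apply, and for the small $n$ we use the explicit character tables.

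First I would dispose of the "generic" subcase where $\eta=1$ is the only root that could possibly fail to be an eigenvalue. A standard observation is that $1$ is always an eigenvalue of $\rho(\sigma)$ unless $\rho$ is the alternating representation or $\sigma$ is quite special: by Lemma~\ref{l:calc_centraliser} with $\eta=1$, $r\cdot\dim C_V(\rho(\sigma))=\sum_{i=0}^{r-1}\chi^\lambda(\sigma^i)$, and one shows this is positive using the Fomin--Lulov bound (Lemma~\ref{l:Fom_Lul}) together with $\chi^\lambda(1)$ being large. For a root $\eta\ne 1$ with $\eta^r=1$, I would bound $|\sum_{i=1}^{r-1}\chi^\lambda(\sigma^i)\overline\eta^{\,i}|$ by $\sum_{i=1}^{r-1}|\chi^\lambda(\sigma^i)|$; each $\sigma^i$ is a product of $m\gcd(i,r)$ cycles of length $r/\gcd(i,r)$, so $|\chi^\lambda(\sigma^i)|\le C\cdot\chi^\lambda(1)^{\gcd(i,r)/r}$ by Lemma~\ref{l:Fom_Lul} (applied on the support of $\sigma^i$, padding with fixed points does not increase the bound beyond a controlled factor — here one has to be a little careful, since $\sigma^i$ may have fixed points; one uses the branching law / the fact that characters of $S_n$ restricted to a Young subgroup are controlled, or simply that the character value on $[d^{n/d}1^0]$ with the full support is what Lemma~\ref{l:Fom_Lul} bounds). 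Summing, for $r$ large relative to the number of divisors this forces $\dim V^\eta(\rho(\sigma))>0$ for all $\eta$, so $\mu_{\rho(\sigma)}(x)=x^r-1$, and only finitely many $(n,r)$ with $r$ small survive.

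For those finitely many surviving $(n,r,m)$ — which in practice means $r\in\{2,3,4,5,6\}$ and a short list of $n$ — I would compute $\chi^\lambda(\sigma^i)$ directly: for the representations appearing in Lemma~\ref{l:MinDegree}(a) we have the closed formulas of Lemma~\ref{l:CharOfSmallDegree}, for hook partitions $(a+rk,1^{r-a+rl})$ we have Lemma~\ref{l:hook_diag}, and for the remaining handful of small cases (notably $n\le 6$, and the self-associated $\lambda=(3,3),(2^3)$ at $n=6$ and $\lambda=(2,2)$ at $n=4$) we read values off the character tables, using Proposition~\ref{l:rep_Sn}(iii) to relate $\chi^{\lambda'}$ to $\chi^\lambda$. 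Plugging these into the eigenspace formula of Lemma~\ref{l:calc_centraliser} for each $r$th root of unity $\eta$ tells us exactly which eigenvalues are missing, hence gives $\mu_{\rho(\sigma)}(x)$; comparing with the list in Theorem~\ref{th:1} confirms that the only deviations from $x^r-1$ are cases (iii), (iv), (v), and the $m=1$ pieces of (i), (ii) (which here means $r=n$, already inside the hypothesis $rm=n$ with $m=1$), plus (vi). The main obstacle is the bookkeeping in the generic bounding step: making the Fomin--Lulov estimate apply cleanly to the powers $\sigma^i$ that acquire fixed points, and choosing the cutoff on $r$ so that the crude triangle-inequality bound $\sum_{i=1}^{r-1}|\chi^\lambda(\sigma^i)|<\chi^\lambda(1)$ (or $<\chi^\lambda(1)$ minus a term ensuring strict positivity) actually holds — this is where one must be careful to use $\chi^\lambda(1)\ge d_2(S_n)$ rather than just $\chi^\lambda(1)\ge n-1$, and to separately handle the small-degree representations by their explicit characters rather than by the bound.
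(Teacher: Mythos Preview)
Your approach diverges sharply from the paper's, which is much shorter: after excluding $\lambda\in\{(1^n),(n-1,1),(2,1^{n-2})\}$ via Lemma~\ref{l:special_reps}, the paper observes that any $\sigma$ of shape $[r^m]$ with $rm=n$ is a power of a single $n$-cycle, so it suffices to compute the eigenvalue multiplicities of $\rho(\tau)$ for $\tau$ an $n$-cycle; the multiplicities for every power (and hence for every admissible $(r,m)$) follow automatically. That single computation, for each $n\le 22$ and each $\lambda$, is then delegated to GAP using Lemma~\ref{l:calc_centraliser}. You miss this reduction and instead treat each $(n,r,m)$ separately, which multiplies the work.

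Two remarks on the details of your outline. First, your worry that $\sigma^i$ ``may have fixed points'' is unfounded in this lemma: since $rm=n$, for $1\le i\le r-1$ the permutation $\sigma^i$ has shape $[(r/d)^{md}]$ with $d=\gcd(i,r)$, and $(r/d)\cdot md=n$, so there are no fixed points and Lemma~\ref{l:Fom_Lul} applies directly. Second, and more seriously, the analytic route you sketch is exactly the one the paper uses for $n\ge 23$ (Lemmas~\ref{l:ch_estimate}, \ref{l:inequality}, \ref{l:rm_equals_n}), and the cutoff $n\ge 23$ is there precisely because the crude triangle-inequality bound $\sum_{i=1}^{r-1}|\chi^\lambda(\sigma^i)|<\chi^\lambda(1)$ is not guaranteed below it. So your ``generic bounding step'' will not whittle the residual list down to the handful of small $r$ you anticipate; for $n\le 22$ you will in effect be left with essentially all cases, and you are back to a full finite check. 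That check is of course doable (it is what the paper does with GAP), but your proposal understates its size and overstates what the Fomin--Lulov bound buys you in this range.

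In short: your plan is not wrong in principle, but it is considerably more laborious than the paper's, it contains a minor misconception about fixed points, and its ``bounding'' phase will not actually reduce the workload for $n\le 22$. The paper's reduction to the $n$-cycle followed by a computer verification is both simpler and what you should expect for a range where the asymptotic estimates are not yet effective.
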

\begin{proof} By Lemma~\ref{l:special_reps}, we can assume that $\lambda\not\in\{(1^n), (n-1,1), (2,1^{n-2})\}$.
We need to prove that the minimal polynomial of $\sigma$ 
is $x^r-1$. Since $\sigma$ is a power of a $n$-cycle,
we can assume that $\sigma=[n^1]$. Now we apply GAP to establish the assertion.
If $\eta$ is a root of $x^n-1$ and $V$ is an irreducible representation of $S_n$ 
then we find dim $V^\eta(\rho(\sigma))$ with the aid of Lemma~\ref{l:calc_centraliser}.
\end{proof}

\begin{lemma}\label{l:case_small_reps} Let $n\geq7$. Assume that either $\lambda$ or $\lambda'$
belongs to $$M=\{(n-2,2),(n-2,1,1),(n-3,3),(n-3,1^3),(n-3,2,1)\}.$$
If $\sigma$ is of shape $[r^m]$, where $n=rm$ and $r\geq2$, then $\chi^\lambda(1)>\sum_{i=1}^{i<r}|\chi^\lambda(\sigma^i)|$.
\end{lemma}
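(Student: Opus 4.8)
The plan is to reduce to the case $\lambda\in M$, and then use the explicit character values of Lemma~\ref{l:CharOfSmallDegree} to estimate the right-hand side term by term. If it is $\lambda'$ rather than $\lambda$ that lies in $M$, then by Proposition~\ref{l:rep_Sn}(iii) we have $|\chi^{\lambda}(\sigma^i)|=|\chi^{\lambda'}(\sigma^i)|$ for all $i$ and $\chi^{\lambda}(1)=\chi^{\lambda'}(1)$, so the inequality for $\lambda$ is equivalent to the one for $\lambda'$; hence we may assume $\lambda\in M$.

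Next I would analyze the cycle type of $\sigma^i$ for $1\le i<r$. Setting $d=\gcd(i,r)$, each $r$-cycle of $\sigma$ breaks into $d$ cycles of length $r/d$ under the $i$-th power, so $\sigma^i$ has shape $[(r/d)^{md}]$, and since $d<r$ we have $r/d\ge 2$. Thus $\sigma^i$ has no fixed points (so $i_1=0$ in the notation of Lemma~\ref{l:CharOfSmallDegree}) and all its cycles share the common length $r/d$, so at most one of the numbers $i_2,i_3$ of $2$- and $3$-cycles of $\sigma^i$ is nonzero. Moreover $\sigma^i$ is a product of transpositions exactly when $r/d=2$, which forces $r$ even and $i=r/2$, so there is at most one such $i$, and then $i_2=n/2$; likewise $\sigma^i$ is a product of $3$-cycles exactly when $r/d=3$, forcing $3\mid r$ and $i\in\{r/3,2r/3\}$, so there are at most two such $i$, and then $i_3=n/3$. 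For every remaining $i$ all cycles of $\sigma^i$ have length at least $4$.

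Substituting $i_1=0$ into the formulas of Lemma~\ref{l:CharOfSmallDegree} gives $\chi^{(n-2,2)}(\sigma^i)=i_2$, $\chi^{(n-2,1^2)}(\sigma^i)=1-i_2$, $\chi^{(n-3,3)}(\sigma^i)=i_3-i_2$, $\chi^{(n-3,1^3)}(\sigma^i)=i_2+i_3-1$, and $\chi^{(n-3,2,1)}(\sigma^i)=-i_3$. Combining this with the counts above, and bounding the contribution of the powers whose cycles have length $\ge 4$ crudely (there are at most $r-1\le n-1$ of them, each contributing at most $1$), one obtains in every case a bound on $\sum_{i=1}^{r-1}|\chi^{\lambda}(\sigma^i)|$ that is \emph{linear} in $n$: at most $n/2$ for $(n-2,2)$, at most $\frac32 n$ for $(n-2,1^2)$, at most $\frac76 n$ for $(n-3,3)$, at most $\frac{13}{6}n$ for $(n-3,1^3)$, and at most $\frac23 n$ for $(n-3,2,1)$. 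On the other hand, putting $i_1=n$, $i_2=i_3=0$ in the same formulas yields $\chi^{\lambda}(1)$ equal to $\frac12 n(n-3)$, $\frac12(n-1)(n-2)$, $\frac16 n(n-1)(n-5)$, $\frac16(n-1)(n-2)(n-3)$, and $\frac13 n(n-2)(n-4)$ respectively (these are the degrees $d_2,\dots,d_6$ of Lemma~\ref{l:MinDegree}). In each of the five cases the required strict inequality then reduces to an elementary polynomial inequality which one checks at $n=7$ and which persists for all $n\ge 7$ because $\chi^{\lambda}(1)$ has larger degree in $n$ than the linear bound on the sum.

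The step deserving the most care is the partition $(n-2,2)$, where the naive estimate $\sum_{i=1}^{r-1}|\chi^{\lambda}(\sigma^i)|\le (r-1)\max_i|\chi^{\lambda}(\sigma^i)|$ is worthless: a single power $\sigma^{r/2}$ already contributes $n/2$, which is of the same order as $\chi^{\lambda}(1)\sim n^2/2$. The way out is precisely the refined observation that there is at most one transposition-power (and at most two $3$-cycle-powers), which is what makes the sum linear rather than quadratic in $n$; the same refinement is what saves the cases $(n-2,1^2)$ and $(n-3,1^3)$.
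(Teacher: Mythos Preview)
Your proposal is correct and follows essentially the same approach as the paper: reduce to $\lambda\in M$ via $|\chi^{\lambda}|=|\chi^{\lambda'}|$, plug $i_1=0$ into the explicit formulas of Lemma~\ref{l:CharOfSmallDegree}, use that at most one power of $\sigma$ is a product of transpositions and at most two are products of $3$-cycles, and compare the resulting linear bound with the known degree. Your presentation is slightly more systematic (organizing the cycle-type of $\sigma^i$ via $d=\gcd(i,r)$), and your constants are marginally looser in a couple of cases (e.g.\ $\tfrac{3}{2}n$ versus the paper's $(n-1)+\tfrac{n-2}{2}$ for $(n-2,1^2)$), but the argument is the same.
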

\begin{proof}
Since $\chi^{\lambda'}(\rho)=\operatorname{sgn}(\rho)\cdot\chi^{\lambda}(\rho)$,
we can assume that $\lambda\in M$. Now we consider each possibility for $\lambda$.

Suppose that $\lambda=(n-2,2)$. Note that $\sigma^i$ is of shape $[k^t]$, where $2\leq k\leq r$ and $kt=n$. Lemma~\ref{l:CharOfSmallDegree}
implies that $\chi^{\lambda}(\sigma^i)=n/2$ if $k=2$ and $\chi^{\lambda}(\sigma^i)=0$ if $k>2$. Observe that $k=2$ for at most one value of $i$. Therefore, $\sum_{i=1}^{i<r}|\chi^\lambda(\sigma^i)|\leq n/2$.
On the other hand, Lemma~\ref{l:MinDegree} implies that
$\chi^{\lambda}(1)=~\frac{n(n-3)}{2}$ and hence $\chi^{\lambda}(1)>n/2$ for $n\geq5$. So $\chi^\lambda(1)>\sum_{i=1}^{i<r}|\chi^\lambda(\sigma^i)|$.

Suppose that $\lambda=(n-2,1,1)$. Lemma~\ref{l:CharOfSmallDegree} implies 
that $\chi(\sigma^i)=1-n/2=(2-n)/2$ if $\sigma^i$ is of shape $[2^{n/2}]$ and $\chi(\sigma^i)=1$ otherwise. Again, the permutation $\sigma^i$ is of shape [$2^{n/2}$] only for at most one value of $i$. 
On the other hand, we know that $\chi^\lambda(1)=(n-1)(n-2)/2$. Therefore,
if $n\geq6$ then $\chi^\lambda(1)>n-1+(n-2)/2\geq\sum_{i=1}^{i<r}|\chi^\lambda(\sigma^i)|$.

Suppose that $\lambda=(n-3,3)$. By Lemma~\ref{l:CharOfSmallDegree}, if $1\leq i\leq r - 1$ then
$$\chi^{\lambda}(\sigma^i)=
\begin{cases}
\chi^\lambda(\sigma^i)=-n/2 & \mbox{ if } \sigma^i\mbox{ is of shape }[2^{n/2}], \\
\chi^\lambda(\sigma^i)=n/3 & \mbox{ if } \sigma^i\mbox{ is of shape }[3^{n/3}], \\
0 & \mbox{ otherwise.}
\end{cases}$$
Observe that $\sigma^i$ is of shape $[2^{n/2}]$ for at most one value of $i$
and $\sigma^i$ is of shape $[3^{n/3}]$ for at most two values of $i$.
Therefore, we have $\sum_{i=1}^{i<r}|\chi^\lambda(\sigma^i)|\leq n/2+2n/3=7n/6$.
On the other hand, $\chi^\lambda(1)=\frac{1}{6}n(n-1)(n-5)$.
Consequently, if $n\geq7$ then $(n-1)(n-5)>7$ and hence $\chi^\lambda(1)>\sum_{i=1}^{i<r}|\chi^\lambda(\sigma^i)|$.

Suppose that $\lambda=(n-3,1^3)$. By Lemma~\ref{l:CharOfSmallDegree}, if $1\leq i\leq r - 1$ then
$$\chi^{\lambda}(\sigma^i)=
\begin{cases}
\chi^\lambda(\sigma^i)=n/2-1 & \mbox{ if } \sigma^i\mbox{ is of shape }[2^{n/2}], \\
\chi^\lambda(\sigma^i)=n/3-1 & \mbox{ if } \sigma^i\mbox{ is of shape }[3^{n/3}], \\
-1 & \mbox{ otherwise.}
\end{cases}$$
Therefore, we have $\sum_{i=1}^{i<r}|\chi^\lambda(\sigma^i)|\leq n/2+2n/3+n-3=13n/6-3$.
On the other hand, $\chi^\lambda(1)=\frac{1}{6}(n-1)(n-2)(n-3)$.
Since $n\geq7$, we have $\frac{1}{6}(n-1)(n-2)(n-3)\geq 20(n-1)/6>13n/6$ and hence $\chi^\lambda(1)>\sum_{i=1}^{i<r}|\chi^\lambda(\sigma^i)|$.

Finally, suppose that $\lambda=(n-3,2,1)$. By Lemma~\ref{l:CharOfSmallDegree}, 
if $\chi^\lambda(\sigma^i)\neq0$ then
$\chi^\lambda(\sigma^i)=-n/3$ and $\sigma^{i}$ is of shape $[3^{n/3}]$.
Therefore, $\sum_{i=1}^{i<r}|\chi^\lambda(\sigma^i)|\leq2n/3$.
On the other hand, $\chi^\lambda(1)=\frac{1}{3}n(n-2)(n-4)$ and hence
$\chi^\lambda(1)>\sum_{i=1}^{i<r}|\chi^\lambda(\sigma^i)|$ for $n\geq5$.
\end{proof}

\begin{lemma}\label{l:rm_equals_n}
Suppose that $n\geq23$ and $\sigma$ is of shape $[r^m]$, where $rm=n$ and $r\geq2$.
If $\chi$ is an irreducible character of $S_n$ and $\chi(1)>n-1$
then $\chi(1)>\sum_{i=1}^{i<r}|\chi(\sigma^i)|$.
\end{lemma}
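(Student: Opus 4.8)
The plan is to split into two cases according to the size of $D:=\chi(1)$, using the list of smallest degrees in Lemma~\ref{l:MinDegree}. Write $\chi=\chi^\lambda$ for the partition $\lambda\vdash n$. If $n-1<D<d_7(S_n)=\frac{1}{24}n(n-1)(n-2)(n-7)$, then, since $n\geq23>22$, Lemma~\ref{l:MinDegree} forces $D\in\{d_2(S_n),\dots,d_6(S_n)\}$, so $\lambda$ or $\lambda'$ lies in $M=\{(n-2,2),(n-2,1,1),(n-3,3),(n-3,1^3),(n-3,2,1)\}$, and then Lemma~\ref{l:case_small_reps} yields $D>\sum_{i=1}^{r-1}|\chi(\sigma^i)|$ immediately. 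Thus the real content is the case $D\geq d_7(S_n)$, which I would treat by estimating the character sum through Lemma~\ref{l:ch_estimate}.

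In that case, first note that for $1\leq i\leq r-1$ the permutation $\sigma^i$ has shape $[s^{n/s}]$ with $s=r/\gcd(i,r)$ a divisor of $r$ satisfying $2\leq s\leq r$ (it is $\geq2$ because $\gcd(i,r)$ is a proper divisor of $r$). Applying Lemma~\ref{l:ch_estimate} with $(s,n/s)$ in the roles of $(r,m)$, and using $s\leq n$ to bound the exponential factor by $e^{1/12}$, we get $|\chi(\sigma^i)|<g(s)D^{1/s}$ with $g(s)=(2\pi)^{\frac{s-1}{2s}}s^{-1/2}n^{\frac{s-1}{2s}}e^{1/12}$. Next I would check that, since $D\geq d_7(S_n)$ is far larger than $\sqrt{2\pi n}$, the quantity $g(s)D^{1/s}$ is maximal at $s=2$: with $g_0:=g(2)=(2\pi)^{1/4}2^{-1/2}n^{1/4}e^{1/12}$ one computes $g(s)D^{1/s}/\bigl(g_0D^{1/2}\bigr)=\sqrt{2/s}\,\bigl(2\pi n/D^2\bigr)^{\frac14-\frac1{2s}}\leq1$ for all $s\geq2$, as soon as $D^2\geq2\pi n$. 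Hence $|\chi(\sigma^i)|<g_0\sqrt D$ for each of the $r-1\leq n-1$ indices $i$, whence $\sum_{i=1}^{r-1}|\chi(\sigma^i)|<(n-1)g_0\sqrt D$.

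It then remains to see that $(n-1)g_0\sqrt D<D$, equivalently $D>(n-1)^2g_0^2=(n-1)^2\sqrt{\pi/2}\,e^{1/6}\sqrt n$. Using $D\geq d_7(S_n)=\frac{1}{24}n(n-1)(n-2)(n-7)$, this reduces to $\sqrt n\,(n-2)(n-7)>24\sqrt{\pi/2}\,e^{1/6}\,(n-1)$, which holds by Lemma~\ref{l:inequality}(i) since $24\sqrt{\pi/2}\,e^{1/6}<36<54$. The main (and essentially only) obstacle is the constant bookkeeping: fixing $g_0$, verifying that $s=2$ gives the worst index, and matching the resulting numerical constant against the $54$ of Lemma~\ref{l:inequality}(i); combined with the elementary observation that the degrees of $S_n$ strictly between $d_1$ and $d_7$ are exactly $d_2,\dots,d_6$ when $n\geq22$, this is routine, as the estimate carries ample slack.
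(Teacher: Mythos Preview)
Your argument is correct and follows the same overall architecture as the paper's proof: the same split at $d_7(S_n)$, Lemma~\ref{l:case_small_reps} for the small-degree range, and Lemma~\ref{l:ch_estimate} together with Lemma~\ref{l:inequality} for the large-degree range, each term being bounded by $\chi(1)/(n-1)$.

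The one organizational difference is worth recording. The paper treats the cycle-length of $\sigma^i$ by cases: for $k=2$ it gets $|\chi(\sigma^i)|<\tfrac32 n^{1/4}\sqrt{\chi(1)}$ and invokes Lemma~\ref{l:inequality}(i), while for $k\geq3$ it uses the cruder bound $|\chi(\sigma^i)|\leq 2\sqrt n\,\chi(1)^{1/3}$ and needs the stronger Lemma~\ref{l:inequality}(ii). You instead observe directly that $g(s)D^{1/s}=\sqrt{2/s}\,(2\pi n/D^2)^{(s-2)/(4s)}\cdot g_0\sqrt D\leq g_0\sqrt D$ once $D^2\geq 2\pi n$, so the $s=2$ bound dominates uniformly; this lets you finish with Lemma~\ref{l:inequality}(i) alone (since $24\sqrt{\pi/2}\,e^{1/6}<36<54$) and renders part~(ii) unnecessary. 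Your route is thus a mild streamlining of the same idea rather than a different strategy.
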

\begin{proof}
Assume that $\chi(1)<{n(n-1)(n-2)(n-7)/24}$.
Lemma~\ref{l:MinDegree} shows that
$\lambda$ or $\lambda'$ belongs to $\{(n-2,2),(n-2,1,1),(n-3,3),(n-3,1^3),(n-3,2,1)\}$
and then $\chi(1)>\sum_{i=1}^{i<r}|\chi(\sigma^i)|$ by Lemma~\ref{l:case_small_reps}.

Assume now that $\chi(1)\geq{n(n-1)(n-2)(n-7)/24}$.

Clearly, if $1\leq i<r$ then $\sigma^i$ is of shape $[k^t]$ for some integer $k$ and $t$ with $2\leq k\leq r$ and $kt=n$.
Lemma~\ref{l:ch_estimate} implies that
$$\frac{|\chi^{\lambda}(\sigma^i)|}{(\chi^{\lambda}(1))^{1/k}}<(2\pi)^{\frac{k-1}{2k}}k^{-1/2}n^{\frac{k-1}{2k}}e^{1/12t}.$$

Assume that $k=2$. Then 
$(2\pi)^{\frac{k-1}{2k}}k^{-1/2}n^{\frac{k-1}{2k}}e^{1/12t}=(\frac{\pi}{2})^{\frac{1}{4}}e^{1/6n}n^\frac{1}{4}<\frac{3}{2}n^\frac{1}{4}$.
Therefore, in this case 
$$|\chi(\sigma^i)|<\frac{3}{2}n^\frac{1}{4}\sqrt{\chi(1)}$$

Assume now that $k\geq3$. Then we have $(2\pi)^{\frac{k-1}{2k}}\leq\sqrt{2\pi}$, $k^{-1/2}\leq\cfrac{1}{\sqrt{2}}$, $n^\frac{k-1}{2k}\leq\sqrt{n}$,
and $e^{1/12t}\leq1.1$. Moreover, $\chi(1)^{1/k}\leq\chi(1)^\frac{1}{3}$ and hence
$$|\chi(\sigma^i)|\leq 1.1\sqrt{\pi}\sqrt{n}\chi(1))^\frac{1}{3}\leq 2\sqrt{n}\chi^\lambda(1)^\frac{1}{3}.$$

Now we prove that $\chi(\sigma^i)<\chi(1)/(n-1)$.
For $k=2$, it suffices to show that $\frac{3}{2}n^\frac{1}{4}\sqrt{\chi(1)}<\chi(1)/(n-1)$. This is equivalent to
$\frac{9}{4}\sqrt{n}(n-1)^2<\chi(1)$.
If $\chi(1)\geq n(n-1)(n-2)(n-7)/24$ then Lemma~\ref{l:inequality} implies that
$\chi(1)\geq \sqrt{n}(n-1)\cdot54/24(n-1)=9/4\sqrt{n}(n-1)^2$, as required.

For $k\geq3$, it suffices to show that $2n^\frac{1}{2}\chi(1)^\frac{1}{3}<\chi(1)/(n-1)$. This is equivalent to
$8n\sqrt{n}(n-1)^3<\chi(1)^2$.
We verify that $$(n(n-1)(n-2)(n-7)/24)^2\geq 8n\sqrt{n}(n-1)^3.$$
This is equivalent to
$\sqrt{n}(n-2)^2(n-7)^2\geq 8(24)^2(n-1)$, which is true by Lemma~\ref{l:inequality}. Therefore,
we infer that $(\chi(1))^2>(n(n-1)(n-2)(n-7)/24)^2\geq 8n\sqrt{n}(n-1)^3$, as required.	
\end{proof}

\begin{lemma}\label{l:general_case} Theorem~\ref{th:1} holds if $\lambda\not\in\{(1^n,(n-1,1),(2,1^{n-2})\}$.
\end{lemma}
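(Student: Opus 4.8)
Since $\sigma^r=1$, the minimal polynomial $\mu_{\rho(\sigma)}(x)$ always divides $x^r-1$, and the latter has $r$ distinct roots over $\mathbb{C}$; hence $\mu_{\rho(\sigma)}(x)=x^r-1$ if and only if every $r$-th root of unity is an eigenvalue of $\rho(\sigma)$. Applying Lemma~\ref{l:calc_centraliser} with $g=\sigma$ and $|\sigma|=r$, for each $r$-th root of unity $\eta$ we obtain $r\cdot\dim V^\eta(\rho(\sigma))=\chi^\lambda(1)+\sum_{i=1}^{r-1}\chi^\lambda(\sigma^i)\overline{\eta}^{\,i}$. Consequently, if $\chi^\lambda(1)>\sum_{i=1}^{r-1}|\chi^\lambda(\sigma^i)|$, then $\dim V^\eta(\rho(\sigma))>0$ for every such $\eta$ and $\mu_{\rho(\sigma)}(x)=x^r-1$. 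The plan is to secure this inequality in all but finitely many configurations, and to settle the remaining ones by a direct eigenvalue count.

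First I would treat the fixed-point-free case $rm=n$. If $n\le 22$ this is exactly Lemma~\ref{l:n_less_23}. If $n\ge 23$, then since $\lambda\notin\{(1^n),(n-1,1),(2,1^{n-2})\}$ and $[\lambda]$ is nontrivial, Lemma~\ref{l:MinDegree}(a) forces $\chi^\lambda(1)\ge\tfrac12 n(n-3)>n-1$, so Lemma~\ref{l:rm_equals_n} yields the displayed inequality and hence $\mu_{\rho(\sigma)}(x)=x^r-1$. The only exceptional pairs of Theorem~\ref{th:1} relevant here, namely $\lambda\in\{(3,3),(2,2,2)\}$ with $n=6$ and $\lambda=(2,2)$ with $n=4$, have $n\le 22$ and are therefore already covered by Lemma~\ref{l:n_less_23}, so there is no conflict.

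The substantive case is $rm<n$, where $\sigma$ has at least one fixed point; here I would induct on the number $n-rm$ of fixed points, the base case $n-rm=0$ being the paragraph above. Assume $\sigma$ fixes the point $n$ and restrict $V$ to $S_{n-1}$. By the Branching Law $V\downarrow S_{n-1}=\bigoplus_i V_{\lambda^{i-}}$, and since $\sigma\in S_{n-1}$ the operator $\rho(\sigma)$ preserves this decomposition; thus the eigenvalues of $\rho(\sigma)$ on $V$ form the union of those on the summands, so $\mu_{\rho(\sigma)}(x)=\operatorname{lcm}_i\,\mu_{\rho(\sigma)|_{V_{\lambda^{i-}}}}(x)$, and each factor divides $x^r-1$. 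Hence it suffices to exhibit one removable box of $\lambda$ whose removal $\lambda^{i-}\vdash n-1$ satisfies $\mu_{\rho(\sigma)|_{V_{\lambda^{i-}}}}(x)=x^r-1$. For this I would first prove the elementary combinatorial fact that a partition $\lambda\vdash n$ with $\lambda\notin\{(n),(1^n),(n-1,1),(2,1^{n-2})\}$ has, once $n$ is not too small, a removable box whose removal again lies outside $\{(n-1),(1^{n-1}),(n-2,1),(2,1^{n-3})\}$ (one checks rectangles, hooks, the near-hooks $(n-2,2)$, $(n-2,1,1)$ and their conjugates; genuine exceptions occur only for a handful of small $n$). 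Since the permutation obtained from $\sigma$ inside $S_{n-1}$ has shape $[r^m1^{n-1-rm}]$ with one fewer fixed point, the inductive hypothesis, applied to the non-special $\lambda^{i-}$, gives $\mu_{\rho(\sigma)|_{V_{\lambda^{i-}}}}(x)=x^r-1$, unless $\lambda^{i-}$ is a sporadic exception of Theorem~\ref{th:1}, which forces $n-1\in\{4,6\}$. Thus the induction closes for all but small $n$, and the residual values of $n$ (including those with $n-1\in\{4,6\}$, and any leftover degenerate pair, for instance $\lambda=(3,2)\vdash5$ with $\sigma=[4^11^1]$, for which every box removed from $\lambda$ yields a special partition of $4$) are disposed of by a direct computation of $\dim V^\eta(\rho(\sigma))$ via Lemma~\ref{l:calc_centraliser}, exactly as in Lemma~\ref{l:n_less_23}.

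The delicate part is this last step: one must check that the combinatorial reduction never lands us forcibly on a sporadic exception when $n$ is large --- equivalently, that the removable box can always be chosen so that $\lambda^{i-}$ is non-special with $|\lambda^{i-}|\notin\{4,6\}$ --- and then enumerate precisely the finitely many small triples $(n,\lambda,\sigma)$ for which the restriction argument degenerates, verifying each one by the character computation above. The remaining bounding work has already been carried out in Lemmas~\ref{l:case_small_reps}--\ref{l:rm_equals_n}.
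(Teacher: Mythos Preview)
Your proposal is correct and follows essentially the same approach as the paper: the case $rm=n$ is handled by Lemmas~\ref{l:n_less_23} and~\ref{l:rm_equals_n}, while the case $rm<n$ is reduced via the Branching Law to $S_{n-1}$ by locating a removable box of $\lambda$ whose removal avoids the special partitions, with the residual small-$n$ configurations (arising from $n-1\in\{4,6\}$) checked directly. The paper frames the induction on $n$ rather than on the number of fixed points, and its treatment of the combinatorial step is terser (it only spells out $\mu=(1^{n-1})$ and $\mu=(n-2,1)$ explicitly and invokes the same argument for $n=5,7$), but the substance is identical.
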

\begin{proof}
Proceed by induction on $n$. Assume that $\sigma$ is of shape $[r^m1^{n-rm}]$.
Suppose that $rm<n$ and $n\neq5,7$. Then we may assume that $\sigma$ belongs to the stabilizer $S_{n-1}$ of the point $n$. The Branching Law yields $[\lambda]\downarrow S_{n-1}=\sum_{i:\lambda_i>\lambda_{i+1}}[\lambda^{i-}]$.
Take some partition $\mu$ of $n-1$ occurring in the latter sum. 
If $\mu\not\in\{(1^{n-1}),(n-2,1),(2,1^{n-2})\}$ then by the induction hypothesis 
 $\sigma$ has an eigenvector for every root $\eta$ of the polynomial
$x^r-1$ and hence the minimal polynomial equals $x^r-1$.
If $\mu=(1^{n-1})$ then $\lambda$ is either $(1^n)$ or $(2,1^{n-2})$; a contradiction.
If $\mu=(n-2,1)$ then, by assumption, $\lambda$ is either $(n-2,2)$ or $(n-2,1,1)$.
In the first case $[\lambda]\downarrow S_{n-1}$ has a summand for the partition $(n-3,2)$ and
in the second case for $(n-3,1,1)$.
Using the induction hypothesis for these summands,
we infer that the minimal polynomial is $x^r-1$. If $n=5$ or $7$ then we apply the same argument avoiding the exceptional cases of the theorem.

Therefore, we can assume that $n=rm$. By Lemma~\ref{l:n_less_23}, it remains to consider the cases $n\geq23$.
If $\eta$ is a root of the polynomial $x^r-1$ then $$n\cdot\dim V^\eta(\rho(\sigma))=\sum_{i=0}^{i<r}\chi^\lambda(\sigma^i)\overline{\eta}^i.$$
Since the sum gives a real number and $\chi^\lambda(\sigma^i)$
are integers, we have $$n\cdot \dim V^\eta(\rho(\sigma))=\sum_{i=0}^{i<r}\chi^\lambda(\sigma^i)\operatorname{Re}(\overline{\eta}^i)\geq \chi^\lambda(1)-\sum_{i=1}^{i<r}|\chi^\lambda(\sigma^i)|.$$
Now Lemma~\ref{l:rm_equals_n} implies that $\dim V^\eta(\rho(\sigma))>0$ and hence
the minimal polynomial of $\sigma$ equals $x^r-1$, as required.

\end{proof}

\section{Proof of Corollary~\ref{cor:1}}
In this section we prove Corollary~\ref{cor:1}.
Throughout, we assume that $(\rho,V)$ is an irreducible representation of $A_n$, where $n\geq5$,
and $\chi$ is the character of $\rho$. By $\sigma$ we denote a permutation of $A_n$
of shape $[r^m1^{n-rm}]$, where $r\geq2$ and $rm\leq n$.  
The minimal polynomial of $\rho(\sigma)$ on $V$ is denoted by $\mu_{\rho(\sigma)}(x)$.

\begin{lemma}\label{l:A_n_standard} If $\rho$ is the standard representation
then $\mu_{\rho(\sigma)}(x)\neq x^r-1$ if and only if
$r=n$, where $n$ is odd.
\end{lemma}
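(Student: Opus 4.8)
The plan is to analyze the standard representation of $A_n$ directly using Lemma~\ref{l:calc_centraliser}, exactly mirroring the $S_n$ computation in Lemma~\ref{l:special_reps}. The standard representation of $A_n$ is the restriction of $[\lambda]$ with $\lambda=(n-1,1)$, and since $(n-1,1)\neq(2,1^{n-2})$ for $n\geq5$, this restriction is irreducible by Proposition~\ref{p:reps_of_An}(i); hence its character is just $\chi^{(n-1,1)}$ restricted to $A_n$. So for a root $\eta$ of $x^r-1$ I would write
\begin{equation*}
r\cdot\dim V^\eta(\rho(\sigma))=\sum_{i=0}^{r-1}\chi^{(n-1,1)}(\sigma^i)\,\overline{\eta}^{\,i},
\end{equation*}
and invoke Lemma~\ref{l:CharOfSmallDegree}(i): writing $i_1=n-rm$ for the number of fixed points, $\chi^{(n-1,1)}(\sigma^0)=n-1$ and $\chi^{(n-1,1)}(\sigma^j)=i_1-1$ for $1\le j\le r-1$ (note every $\sigma^j$ with $j\not\equiv0$ still has exactly $i_1$ fixed points since $r$ is the common cycle length). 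This is the same computation as in the $\lambda=(n-1,1)$ case of Lemma~\ref{l:special_reps}, giving
\begin{equation*}
r\cdot\dim V^\eta(\rho(\sigma))=\begin{cases} n-i_1+(i_1-1)r & \eta=1,\\ n-i_1 & \eta\neq1,\end{cases}
\end{equation*}
so the only way some eigenspace vanishes is $i_1=0$, $\eta=1$, $r=n$, i.e. $\sigma$ is an $n$-cycle. In that case $\mu_{\rho(\sigma)}(x)=\frac{x^n-1}{x-1}$ and otherwise $\mu_{\rho(\sigma)}(x)=x^r-1$.

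The one genuinely new point compared with the $S_n$ case is the constraint $\sigma\in A_n$: an $n$-cycle is even if and only if $n$ is odd. So the conclusion "$\mu_{\rho(\sigma)}(x)\neq x^r-1$ iff $r=n$" must be refined to "$r=n$ with $n$ odd," since for $n$ even an $n$-cycle does not lie in $A_n$ and the hypothesis $\sigma\in A_n$ excludes it. I would state this explicitly: if $r=n$ then $\sigma$ is an $n$-cycle, which belongs to $A_n$ precisely when $n$ is odd, and conversely if $n$ is odd the $n$-cycle is an admissible $\sigma$ realizing the exceptional minimal polynomial.

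I do not expect a serious obstacle here; the main thing to be careful about is the bookkeeping of fixed-point counts for the powers $\sigma^i$ (ensuring $i_1$ is genuinely constant across $1\le i\le r-1$, which holds because all nontrivial cycles of $\sigma$ have the same length $r$, so $\sigma^i$ fixes a point iff $\sigma$ does, for $i$ not divisible by $r$) and correctly tracking the parity condition coming from $\sigma\in A_n$. The computation of $\sum_{i=0}^{r-1}\overline{\eta}^{\,i}$, which is $r$ for $\eta=1$ and $0$ otherwise, is routine.
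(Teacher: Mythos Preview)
Your proof is correct and follows essentially the same approach as the paper's: both observe that for $n\ge5$ the partition $(n-1,1)$ is not self-associated, so the restriction to $A_n$ is irreducible with the same character, and hence the eigenspace dimensions agree with the $S_n$ case. The only difference is cosmetic: the paper simply invokes Theorem~\ref{th:1} at this point, whereas you reproduce the eigenspace computation from Lemma~\ref{l:special_reps} explicitly; the parity observation ($n$-cycles lie in $A_n$ iff $n$ is odd) is handled identically.
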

\begin{proof}
Take the partition $\lambda=(n-1,1)$ of $n$
corresponding to the standard representation of $S_n$.
Since $n\geq5$, we have $\lambda\neq\lambda'$ and hence
$[\lambda]\downarrow A_n$ is the standard representation of $A_n$.
For every $\tau\in A_n$ we have $\chi(\tau)=\chi^\lambda(\tau)$.
Then for every $\eta$ with $\eta^r=1$ the dimensions of eigenspaces
of $\sigma$ for $A_n$ and $S_n$ coincide. Now Theorem~\ref{th:1} implies that
$\mu_{\rho(\sigma)}(x)\neq x^r-1$ only if $\sigma$ is a length $n$ cycle .
If $\sigma$ is such a cycle then $n$ is odd and hence $\mu_{\rho(\sigma)}(x)=\frac{x^n-1}{x-1}$ by Theorem~\ref{th:1}.
\end{proof}

\begin{lemma} Corollary~\ref{cor:1} holds if $\rho$ is not the standard representation.
\end{lemma}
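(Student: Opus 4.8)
The plan is to reduce everything possible to Theorem~\ref{th:1}, and to treat the one genuinely new phenomenon---the splitting of a self-associated $[\lambda]$ into the conjugate pair $[\lambda]^{\pm}$---by an explicit computation of eigenspace dimensions via Lemma~\ref{l:calc_centraliser}. So first suppose $\lambda\neq\lambda'$, i.e.\ $\rho=[\lambda]\downarrow A_n$. Since $\sigma\in A_n$ we have $\chi(\sigma^i)=\chi^\lambda(\sigma^i)$ for all $i$, so by Lemma~\ref{l:calc_centraliser} each eigenspace $V^\eta(\rho(\sigma))$ has the same dimension as the corresponding eigenspace of $[\lambda](\sigma)$ viewed as an $S_n$-module; hence $\mu_{\rho(\sigma)}(x)$ is the minimal polynomial of $[\lambda](\sigma)$ over $S_n$. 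By Theorem~\ref{th:1} this differs from $x^r-1$ only in cases (i)--(vi), and under the present hypotheses none applies: (i) and (ii) give the standard representation of $A_n$ (excluded), (iii) and (iv) require $\sigma$ to be a $6$-cycle, which is odd and so not in $A_6$, (v) requires $n=4<5$, and (vi) gives the trivial representation of $A_n$. Thus $\mu_{\rho(\sigma)}(x)=x^r-1$, and from now on $\lambda=\lambda'$ and $\rho=[\lambda]^{\pm}$.

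Let $a_1>a_2>\dots>a_l$ be the lengths of the hooks along the main diagonal of $T^\lambda$ as in Proposition~\ref{p:chars_of_An}, and put $r=|\sigma|$. Suppose first that none of $\sigma^0,\sigma^1,\dots,\sigma^{r-1}$ is of shape $[a_1^1a_2^1\cdots a_l^1]$. Then Proposition~\ref{p:chars_of_An} gives $\chi^{\lambda+}(\sigma^i)=\chi^{\lambda-}(\sigma^i)=\frac12\chi^\lambda(\sigma^i)$ for every $i$, so by Lemma~\ref{l:calc_centraliser} $\dim V^\eta([\lambda]^+(\sigma))=\dim V^\eta([\lambda]^-(\sigma))=\frac12\dim V^\eta([\lambda](\sigma))$ for each $r$-th root of unity $\eta$. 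Since $n\geq5$, no self-associated partition appears among the exceptions of Theorem~\ref{th:1} (the only self-associated one there is $(2,2)$, forcing $n=4$), so $\dim V^\eta([\lambda](\sigma))>0$; being even, it is at least $2$, whence $\dim V^\eta([\lambda]^{\pm}(\sigma))\geq1$ and $\mu_{\rho(\sigma)}(x)=x^r-1$.

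It remains to handle the case in which some power $\sigma^j$ is of shape $[a_1^1\cdots a_l^1]$. Since $\sigma$ has shape $[r^m1^{n-rm}]$, the shape of $\sigma^j$ is $[(r/d)^{md}1^{n-rm}]$ with $d=\gcd(j,r)$, and its parts are pairwise distinct only if $md=1$; so $m=1$, and then either $r=n$ ($\sigma$ an $n$-cycle, $l=1$, $a_1=n$, $n$ odd, $\lambda$ the self-associated hook $(\frac{n+1}{2},1^{(n-1)/2})$) or $r=n-1$ ($\sigma$ an $(n-1)$-cycle, $l=2$, $\{a_1,a_2\}=\{n-1,1\}$, $n$ even, $\lambda=(\frac n2,2,1^{\frac n2-2})$). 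In either case $\chi^\lambda$ takes the value $\pm1$ on each such $\sigma^j$---by Lemma~\ref{l:characterOfcycle} in the first case, and by Proposition~\ref{NM-formula} in the second, where $T^\lambda$ has a unique hook of length $n-1$---while whether $\sigma^j$ is $A_n$-conjugate to $\sigma$ is decided by the sign of the multiplication-by-$j$ map on $r$ points, i.e.\ by the Jacobi symbol $\big(\frac jr\big)$ (Zolotarev's lemma). Feeding this into Proposition~\ref{p:chars_of_An} and Lemma~\ref{l:calc_centraliser}, one writes $r\dim V^\eta([\lambda]^+(\sigma))$ as $\frac12\chi^\lambda(1)$ plus an error term made of bounded multiples of $\chi^\lambda$ on cycle types $[k^{n/k}]$ with $k\mid r$ together with the Gauss sum $\sum_j\big(\frac jr\big)\overline\eta^j$; bounding the former by Lemma~\ref{l:ch_estimate} and the latter trivially by $r$, the whole error is $O(n^{1/4}\chi^\lambda(1)^{1/2}+n^{3/2})$, while the main term $\frac12\chi^\lambda(1)$---equal to $\frac12\binom{n-1}{(n-1)/2}$ in the first case and of comparable exponential size in the second---dominates once $n$ exceeds a small explicit bound, so all eigenspaces are nonzero and $\mu_{\rho(\sigma)}(x)=x^r-1$. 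The finitely many remaining small $n$ are settled with GAP exactly as in Lemma~\ref{l:n_less_23}; the sole instance with $\mu_{\rho(\sigma)}(x)\neq x^r-1$ is $n=5$, $\lambda=(3,1,1)$, where $\dim[\lambda]^{\pm}=3<5$ forces a conjugate pair of primitive fifth roots of unity to be missing and a direct computation gives $\mu_{\rho(\sigma)}(x)=\frac{x^5-1}{(x-\eta)(x-\overline\eta)}$, which is case (ii) of Corollary~\ref{cor:1}.

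The step I expect to be the main obstacle is this last one: making the Gauss-sum error bound sharp enough that only genuinely small $n$ (conveniently $n\leq22$) require the computer, correctly pinning down which $A_n$-class each power $\sigma^j$ falls into so that Proposition~\ref{p:chars_of_An} is applied with the right signs, and checking that at $n=5$ the minimal polynomial really is the degree-$3$ polynomial of case (ii) and not one of smaller degree.
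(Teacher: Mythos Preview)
Your reduction to Theorem~\ref{th:1} for $\lambda\neq\lambda'$ and for the self-associated case in which no power of $\sigma$ hits the special cycle type $[a_1^1\cdots a_l^1]$ is exactly what the paper does. The divergence is in the two residual cases $\lambda=(\tfrac{n+1}{2},1^{(n-1)/2})$ with $r=n$ odd, and $\lambda=(\tfrac{n}{2},2,1^{n/2-2})$ with $r=n-1$, $n$ even. The paper treats the latter by induction: it restricts $[\lambda]$ to $S_{n-1}$ via the Branching Law, notes that all resulting constituents (and their further restrictions to $A_{n-1}$) are non-standard, and invokes the inductive hypothesis. For the former it never tracks which $A_n$-class a given $\sigma^j$ lies in: it simply bounds $|\chi^{\lambda\pm}(\sigma^j)|$ by $\tfrac12(1+\sqrt{n})$ when $\sigma^j$ is a cycle (directly from the formula in Proposition~\ref{p:chars_of_An}) and by $\tfrac12\binom{m-1}{(m-1)/2}$ when $\sigma^j$ has shape $[k^m]$ (via Lemma~\ref{l:hook_diag}), and checks that the sum stays below $\chi(1)$. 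Your route via Zolotarev's lemma and Jacobi--Gauss sums is correct in principle but unnecessary: since you only need $|\chi^{\lambda\pm}(\sigma^j)|\leq\tfrac12(1+\sqrt{n})$, the sign---and hence the class---of $\sigma^j$ is irrelevant, and the trivial bound already gives the $O(n^{3/2})$ error you want without importing quadratic-reciprocity machinery foreign to the paper.

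There is one genuine slip. In the case $r=n-1$ you describe the non-special powers $\sigma^j$ as having cycle type $[k^{n/k}]$ with $k\mid r$ and then invoke Lemma~\ref{l:ch_estimate}. But here $\sigma^j$ has shape $[k^{(n-1)/k}\,1^1]$ with a fixed point, so Lemma~\ref{l:ch_estimate} (which assumes $n=rm$) does not apply to $\chi^\lambda$ directly. This is easy to repair---e.g.\ apply the Branching Law once to pass to partitions of $n-1$ and then use Lemma~\ref{l:ch_estimate} there---but as written the bound is unjustified. The paper sidesteps this entirely by handling $r=n-1$ through induction and the Branching Law rather than character estimates, which is both cleaner and keeps the argument self-contained.
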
	
\begin{proof}
Induct on $n$. If $n=5$ or $6$ then the assertion can be verified using the character tables of $A_5$ and $A_6$. Observe that there are two irreducible characters of $A_5$ that correspond to the self-associated partition $(3,1,1)$. In these cases if $\eta$ is a primitive 5th root of unity then either $\eta$ and $\overline{\eta}$ or $\eta^2$ and $\overline{\eta^2}$ are not roots of $\mu_{\rho((1,2,3,4,5))}(x)$.

Assume now that $\rho$ is obtained from the irreducible representation of $S_n$
corresponding to a partition $\lambda$ of $n$. If $\lambda\neq\lambda'$ then we can argue as in Lemma~\ref{l:A_n_standard} and the assertion follows from Theorem~\ref{th:1}.
Suppose that $\lambda=\lambda'$. If $\chi(\sigma)=\chi^\lambda(\sigma)/2$ then the assertion follows from Theorem~\ref{th:1}. Denote the lengths of hooks along the main diagonal of $T^\lambda$ by $l_1$,$l_2$,$\ldots$,$l_k$. It follows from Proposition~\ref{p:chars_of_An} that $\chi(\sigma)\neq\chi^\lambda(\sigma)/2$
only if $\sigma$ is of shape $[l_1^1l_2^1\ldots{l}_k^1]$. Since $l_i$ are always distinct,
we infer that either $\sigma=[(n-1)^11^1]$ and $n$ is even or $\sigma=[n^1]$ and $n$ is odd.

Assume that $\sigma=[(n-1)^11^1]$, that is,	 a length $n-1$ cycle.
In this case $\rho$ results from the representation of $S_n$ corresponding to the
self-associated partition $\lambda=(\frac{n}{2}, 2, 1^\frac{n-4}{2})$.
By Proposition~\ref{p:reps_of_An}, we have $[\lambda]\downarrow A_n=V_1\oplus V_2$.
Consider the subgroups $A_{n-1}$ and $S_{n-1}$ containing $\sigma$ and stabilizing the point $n$.
By the Branching Law,  
$[\lambda]\downarrow S_{n-1}=[(\frac{n-2}{2}, 2, 1^{\frac{n-4}{2}})]+[(\frac{n}{2},1^\frac{n-2}{2})]+[(\frac{n}{2}, 2, 1^{\frac{n-6}{2}})]$.
All the summands are not equivalent to the standard representation of $S_{n-1}$ and only the partition $(\frac{n}{2},1^{\frac{n-2}{2}})$
is self-associated. By Proposition~\ref{p:reps_of_An}, restricting these representations to $A_{n-1}$, yields four summands. By induction,
in each of them the minimal polynomial of $\sigma$ equals $x^{n-1}-1$.
Therefore, each irreducible constituent of the restrictions of the representations of $A_n$ to $A_{n-1}$ equals one of the four mentioned above and hence the minimal polynomials of $\sigma$ on $V_1$ and $V_2$ equal $x^{n-1}-1$.

Assume that $\sigma=[n^1]$, that is, a length $n$ cycle with odd $n$.
In this case $\rho$ results from a representation of $S_n$ corresponding to the self-associated partition 
$\lambda=(\frac{n+1}{2}, 1^\frac{n-1}{2})$.
By Proposition~\ref{p:chars_of_An}, if $\tau$ is not a~length $n$ cycle then $\chi(\tau)=\chi^\lambda(\tau)/2$.
Moreover, Lemma~\ref{l:characterOfcycle} and Proposition~\ref{p:chars_of_An} imply that $\chi(\sigma)=\frac{1}{2}\big((-1)^\frac{n-1}{2}\pm\sqrt{(-1)^\frac{n-1}{2}n}\big)$.

We prove that $|\chi(1)|\geq\sum_{i=1}^{n-1}|\chi(\sigma^i)|.$
The hook-length formula yields $\chi(1)=\chi^\lambda(1)/2=\frac{n}{2}\cdot{n-1\choose\frac{n-1}{2}}$. We show that 
$|\chi(\sigma^i)|\leq\frac{1}{2}{n-1\choose\frac{n-1}{2}}$ for each $i\in\{1,\ldots,n-1\}$.
If $\sigma^i$ is not a cycle then $\sigma^i$ is of shape $[r^m]$, where $r,m\geq2$. Now Lemma~\ref{l:hook_diag} yields $\chi^{\lambda}(\sigma^i)=(-1)^{(r-1)/2}\cdot{m-1\choose\frac{m-1}{2}}$ and hence $|\chi(\sigma^i)|=\frac{1}{2}|\chi^{\lambda}(\sigma^i)|\leq\frac{1}{2}{n-1\choose\frac{n-1}{2}}$, as claimed.
If $\sigma^i$ is a cycle then $\chi(\sigma^i)=\frac{1}{2}((-1)^{\frac{(r-1)}{2}}\pm\sqrt{(-1)^{\frac{(r-1)}{2}}n})$.
It follows that $|\chi(\sigma^i)|\leq\frac{1}{2}(1+\sqrt{n})$. Since ${n-1\choose\frac{n-1}{2}}>{n-1\choose1}=n-1$, we have ${n-1\choose\frac{n-1}{2}}\geq n\geq2\sqrt{n}-1>1+\sqrt{n}$, and hence $|\chi(\sigma^i)|<\frac{1}{2}{n-1\choose\frac{n-1}{2}}$, as claimed.

Now consider a complex number $\eta$ such that $\eta^n=1$. Since $$\operatorname{Re}(\chi(\sigma^i)\overline{\eta}^i)\geq-|\chi(\sigma^i)\overline{\eta}^i|=-|\chi(\sigma^i)|,$$
we have
$$n\cdot\dim (V^\eta(\rho(\sigma)))=\sum_{i=0}^{i<n}\chi(\sigma^i)\overline{\eta}^i=\sum_{i=0}^{i<n}\operatorname{Re}(\chi(\sigma^i)\overline{\eta}^i)\geq\chi(1)-\sum_{i=1}^{i<n}|\chi(\sigma^i)|>0.$$
This implies that the minimal polynomial of $\rho(\sigma)$ equals $x^n-1$.
\end{proof}

\section{Proof of Proposition~\ref{pr:1}}
In this section we prove Proposition~\ref{pr:1}.
The proof is divided into two lemmas.
\begin{lemma} For an odd integer $n\geq5$, consider $\sigma\in S_n$ of shape $[(n-2)^12^1]$ and an irreducible representation $\rho:S_n\rightarrow GL(V)$ of $S_n$ corresponding to the partition $(2^2,1^{n-4})$. 
Then $\sigma$ lacks nontrivial fixed vectors in $V$.
\end{lemma}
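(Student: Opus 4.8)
The plan is to show $\dim V^{1}(\rho(\sigma)) = 0$ by evaluating the sum in Lemma~\ref{l:calc_centraliser} for $\eta = 1$. Write $\sigma = c\,d$, where $c$ is the $(n-2)$-cycle on $\{1,\dots,n-2\}$ and $d = (n-1,\,n)$. Since $n$ is odd, $n-2$ is odd, so $\gcd(n-2,2) = 1$ and $|\sigma| = 2(n-2)$; also $\operatorname{sgn}(\sigma) = \operatorname{sgn}(c)\operatorname{sgn}(d) = (-1)^{n-3}\cdot(-1) = -1$, hence $\operatorname{sgn}(\sigma^{i}) = (-1)^{i}$. Noting that $(2^{2},1^{n-4}) = (n-2,2)'$, Proposition~\ref{l:rep_Sn}(iii) gives $\chi^{(2^2,1^{n-4})} = \operatorname{sgn}\cdot\chi^{(n-2,2)}$, and $\chi^{(n-2,2)}$ is given explicitly by Lemma~\ref{l:CharOfSmallDegree}(ii) in terms of the number $i_1$ of fixed points and the number $i_2$ of transpositions of the argument, so the whole computation reduces to reading off the cycle type of each $\sigma^{i}$.

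Next I would tabulate the cycle type of $\sigma^{i} = c^{i}d^{i}$ for $0 \le i < 2(n-2)$. Here $d^{i} = 1$ if $i$ is even and $d^{i} = d$ if $i$ is odd, while $c^{i}$ is a product of $\gcd(i,n-2)$ cycles of length $(n-2)/\gcd(i,n-2)$. The crucial point is that $n-2$ is odd, so this common cycle length is odd; thus $c^{i}$ never contributes a $2$-cycle, and it contributes fixed points only when $c^{i} = 1$, i.e. when $(n-2)\mid i$. Consequently the pair $(i_1,i_2)$ for $\sigma^{i}$ equals $(n,0)$ when $i = 0$; equals $(n-2,1)$ when $i = n-2$ (which is odd); equals $(2,0)$ when $i$ is even and $(n-2)\nmid i$; and equals $(0,1)$ when $i$ is odd and $(n-2)\nmid i$.

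Feeding these into $\chi^{(2^2,1^{n-4})}(\sigma^{i}) = (-1)^{i}\bigl(\tfrac12(i_1-1)(i_1-2) + i_2 - 1\bigr)$ yields $\chi^{(2^2,1^{n-4})}(\sigma^{0}) = \tfrac12 n(n-3)$, then $\chi^{(2^2,1^{n-4})}(\sigma^{n-2}) = -\tfrac12(n-3)(n-4)$, and $\chi^{(2^2,1^{n-4})}(\sigma^{i}) = -1$ for every $i$ with $(n-2)\nmid i$ — both parities collapse to $-1$ after the sign factor. There are $2(n-2)$ indices, exactly two of which ($i=0$ and $i=n-2$) satisfy $(n-2)\mid i$, leaving $2(n-3)$ indices on which the character equals $-1$, so
\[
\sum_{i=0}^{2(n-2)-1}\chi^{(2^2,1^{n-4})}(\sigma^{i}) = \tfrac12 n(n-3) - \tfrac12(n-3)(n-4) - 2(n-3) = 2(n-3) - 2(n-3) = 0 .
\]
By Lemma~\ref{l:calc_centraliser}, $2(n-2)\cdot\dim V^{1}(\rho(\sigma)) = 0$, so $\sigma$ has no nonzero fixed vector in $V$.

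The argument is largely mechanical; the one place demanding care is the cycle-type bookkeeping for $\sigma^{i}$, in particular the parity observation that, because $n-2$ is odd, $c^{i}$ contributes no transposition and no fixed point away from $(n-2)\mid i$ — this is precisely what makes the character constant on the bulk of the indices and forces the cancellation. I would also check the two boundary indices $i = 0$ and $i = n-2$ by hand, since they are the only ones where $c^{i}$ is trivial.
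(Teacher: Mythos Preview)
Your proof is correct and follows essentially the same route as the paper's: both use $\chi^{(2^2,1^{n-4})}=\operatorname{sgn}\cdot\chi^{(n-2,2)}$ together with Lemma~\ref{l:CharOfSmallDegree}(ii) to tabulate $\chi^\lambda(\sigma^i)$ over $0\le i<2(n-2)$ and then apply Lemma~\ref{l:calc_centraliser} at $\eta=1$ to obtain the vanishing sum. Your write-up is in fact more explicit than the paper's about the cycle-type bookkeeping for $\sigma^i$ (the paper simply records the four character values), but the argument and the final cancellation $2(n-3)-2(n-3)=0$ are identical.
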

\begin{proof}
Put $\lambda=(2^2,1^{n-4})$ and denote by $\chi^\lambda$ the character of $\rho$.
Proposition~\ref{l:rep_Sn}$(iii)$ implies that $\chi^\lambda(\tau)=\operatorname{sgn}(\tau)\chi^{(n-2,2)}(\tau)$ for every 
$\tau\in S_n$. Therefore, applying Lemma~\ref{l:CharOfSmallDegree}, we can find $\chi^\lambda(\sigma^j)$ for every $0\leq j<2(n-2)$:
$$\chi^{\lambda}(\sigma^j)=\begin{cases} \frac{1}{2}(n-1)(n-2)-1=\frac{n(n-3)}{2} & \mbox{if } j=0,  \\ 
-1 & \mbox{if } j\neq0 \mbox{ is even}, \\
-\frac{1}{2}(n-3)(n-4) & \mbox{if } j=n-2, \\
-1 & \mbox{if } j\neq{n-2} \mbox{ and } j \mbox{ is odd}.  \end{cases}$$

Since among the numbers $0,\ldots,2(n-2)-1$ exactly $n-2$ are even and $n-2$ are odd, 
we infer that $|\sigma|\cdot\operatorname{dim}C_V(\rho(\sigma))=\sum_{j=0}^{j<|\sigma|}\chi^\lambda(\sigma^j)=\frac{n(n-3)}{2}-(n-3)-\frac{(n-3)(n-4)}{2}-(n-3)=~0$.
\end{proof}

\begin{lemma} Suppose that the pair $(\lambda,\sigma)$ is one of the following.
	\begin{enumerate}[(i)]
	\item $\lambda=(1^n)$ and $\sigma$ is odd,
        \item or $\sigma$ is $n$-cycle and either $\lambda=(n-1,1)$ or $n$ is odd and $\lambda=(2,1^{n-2}),$
        \item or $((2^3), [3^12^11^1])$, $((4^2), [5^13^1])$, $((2^4), [5^13^1])$, $((2^5), [5^13^12^1]).$
\end{enumerate}
	Then $\sigma$ has no nontrivial fixed vectors in $V$.
\end{lemma}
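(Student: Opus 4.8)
The plan is to handle the three groups of cases separately, in each case computing $\dim C_V(\rho(\sigma))$ via Lemma~\ref{l:calc_centraliser} applied with $\eta=1$, i.e. showing that $\sum_{j=0}^{j<|\sigma|}\chi^\lambda(\sigma^j)=0$.

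For case $(i)$, $\lambda=(1^n)$ gives the alternating representation, so $\chi^\lambda(\tau)=\operatorname{sgn}(\tau)$. When $\sigma$ is odd its order $|\sigma|$ is even, and among the powers $\sigma^0,\ldots,\sigma^{|\sigma|-1}$ exactly half are even and half are odd permutations (since $\operatorname{sgn}$ restricted to $\langle\sigma\rangle$ is a surjective homomorphism onto $\{\pm1\}$); hence the character sum is zero. For case $(ii)$, suppose $\sigma$ is an $n$-cycle. If $\lambda=(n-1,1)$ is the standard representation, Lemma~\ref{l:special_reps} already shows $\mu_{\rho(\sigma)}(x)=\frac{x^n-1}{x-1}$, so $1$ is not an eigenvalue; equivalently, from Lemma~\ref{l:CharOfSmallDegree}$(i)$ we have $\chi^\lambda(\sigma^j)=i_1-1$ with $i_1=0$ for $1\le j<n$ (each nontrivial power of an $n$-cycle with $n$ prime... in general each $\sigma^j$ with $\gcd(j,n)=d$ is of shape $[(n/d)^d]$, which has $i_1=0$ unless $d=n$), so $\sum_{j=0}^{n-1}\chi^\lambda(\sigma^j)=(n-1)+(n-1)(-1)=0$. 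If instead $n$ is odd and $\lambda=(2,1^{n-2})$, then $\chi^\lambda(\tau)=\operatorname{sgn}(\tau)\chi^{(n-1,1)}(\tau)$; an $n$-cycle with $n$ odd is even and so are all its powers, so this sum coincides with the previous one and again vanishes.

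For the four sporadic pairs in $(iii)$, the cleanest approach is to use Proposition~\ref{l:rep_Sn}$(iii)$ to reduce each $\lambda$ to its (smaller-part) associate when helpful, then evaluate $\chi^\lambda$ on each power $\sigma^j$ directly — either from Lemma~\ref{l:CharOfSmallDegree} for the partitions appearing there, or from the Murnaghan--Nakayama formula (Proposition~\ref{NM-formula}) / Frobenius's formula (Proposition~\ref{p:frob_formula}) for the remaining ones, since all the groups involved are small ($n\in\{6,8,10\}$). For instance $((2^3),[3^12^11^1])$ has $n=6$, $|\sigma|=6$, and $\lambda'=(3,1^3)$; $((2^4),[5^13^1])$ and $((4^2),[5^13^1])$ have $n=8$, $|\sigma|=15$ with associates $(4,1^4)$ and $(2^2,1^4)$ respectively; $((2^5),[5^13^12^1])$ has $n=10$, $|\sigma|=30$. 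In each case one lists the cycle types of all powers $\sigma^j$, reads off the character values, and checks that the sum telescopes to $0$; this is routine once the cycle-type bookkeeping is set up, and it can be cross-checked against GAP as in the Remark. The main obstacle is purely computational: assembling the power-map of $\langle\sigma\rangle$ and the corresponding character values correctly for the $n=8$ and $n=10$ cases, where $|\sigma|$ is large and the character is not covered by Lemma~\ref{l:CharOfSmallDegree}; organizing this as a short table for each pair keeps the verification transparent.
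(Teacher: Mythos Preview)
Your approach coincides with the paper's, which for (i) and (ii) simply refers back to Theorem~\ref{th:1} (these are exactly the computations carried out in Lemma~\ref{l:special_reps}), and for (iii) defers to the character tables of $S_6$, $S_8$, $S_{10}$ or to GAP. Your explicit arguments for (i) and (ii) are correct.

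There is, however, a concrete slip in your case-(iii) sketch: the conjugate partitions you list are wrong. One has $(2^3)'=(3,3)$, not $(3,1^3)$; $(2^4)'=(4,4)=(4^2)$, not $(4,1^4)$; and $(4^2)'=(2^4)$, not $(2^2,1^4)$. In particular $(2^4)$ and $(4^2)$ are each other's conjugates, so passing to the associate gains nothing for that pair. This does not invalidate the plan --- you can still evaluate $\chi^\lambda(\sigma^j)$ directly via Murnaghan--Nakayama and check that the sum vanishes --- but the bookkeeping you describe would go astray if you actually used the stated associates. Since the paper itself just invokes the character tables (or GAP) here, a clean table of values for each of the four pairs is all that is needed.
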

\begin{proof}
The claims corresponding to $(i)$ and $(ii)$ are considered in Theorem~\ref{th:1}. The last claim can be verified 
using characters tables of $S_6$, $S_8$, and $S_{10}$ \cite{Atlas} or GAP.
\end{proof}

\Addresses

\begin{thebibliography}{9}
  \bibitem{HallHig} P.\,Hall and G.\,Higman, On the $p$-length of $p$-soluble groups and reduction theorems for Burnside’s problem, Proc. London Math. Soc., {\bf6} (1956), 1--42.
  	
  \bibitem{TiepZal08} Pham Huu Tiep, A.E.\,Zalesskii, Hall-Higman type theorems for semisimple elements of finite classical groups, Proc. London Math. Soc., {\bf97}:3 (2008), 623--668.
	
  \bibitem{Zal96} A.E.\,Zalesskii, \textit{Eigenvalues of prime-order elements in projective representations of alternating groups.} (Russian) Vestsī Akad. Navuk Belarusī Ser. Fīz.-Mat. Navuk,  {\bf 131}:3 (1996), 41--43.
  
  \bibitem{KleshZal04} A.S.\,Kleshchev and A.E.\,Zalesski. \textit{Minimal polynomials of elements of order $p$ in  $p$-modular projective representations of alternating groups}, Proc. Amer. Math. Soc., {\bf 132}:6 (2004), 1605--1612.
   
  \bibitem{GAP} The GAP Group, GAP -- Groups, Algorithms, and Programming, Version 4.10.2; 2019. (\url{https://www.gap-system.org}) 
  
  \bibitem{James} G.\,James and A.\,Kerber, \textit{The representation theory of the symmetric group} (Encyclopedia of Mathematics and its Applications, Vol. 16, Addison-Wesley, Reading, Mass., 1981), pp. 510.

  \bibitem{FulHar} W.\,Fulton, J.\,Harris, \textit{Representation theory: a first course. Graduate texts in mathematics}. Springer-Verlag, New York, Berlin, Paris, 1991, 551 p.

  \bibitem{FominLulov} S.V.\,Fomin, N.\,Lulov, \textit{On the number of rim hook tableaux},
 J. Math. Sci. (N.Y.) {\bf87} (1997), 4118--4123.

  \bibitem{Rob55} H.\,Robbins, \textit{A remark on Stirling's formula}, Amer. Math. Monthly, {\bf62} (1955), 26--29.

  \bibitem{Rasala} R.\,Rasala, \textit{On the minimal degrees of characters of $S_n$}, J. Algebra {\bf45}:1 (1977),  132--181.

  \bibitem{Tong} Tong-Viet, Hung\,P, \textit{Symmetric groups are determined by their character degrees.}, J. Algebra, {\bf334} (2011), 275--284.

  

%	\bibitem{LarShal} M. Larsen, A. Shalev, \textit{Characters of symmetric groups: sharp bounds and applications}, Invent. math, \textbf{174} (2008), 645--687. 	
   \bibitem{Atlas} J.H.\,Conway, R.T.\,Curtis, S.P.\,Norton, R.A.\,Parker, R.A.\,Wilson, \textit{Atlas of finite groups}, Clarendon Press, Oxford, 1985.
\end{thebibliography}
\end{document}